\documentclass[letter,
fontsize=12pt,%
oneside,%
numbers=enddot]{scrartcl}
\KOMAoptions{DIV=14}    
\usepackage[utf8]{inputenc}
\usepackage[T1]{fontenc}
\usepackage{graphicx}
\usepackage{textcomp}
\usepackage[fleqn]{amsmath}
\usepackage{amsthm}
\usepackage{amssymb}
\usepackage{amsfonts}
\usepackage{accents}
\usepackage{soul}
\usepackage{pifont}        
\usepackage{libertine}
\usepackage[libertine,
slantedGreek,
nosymbolsc,
nonewtxmathopt,
subscriptcorrection]{newtxmath}
\usepackage[scaled=0.95,varqu,varl]{inconsolata}
\usepackage[scr=boondox]{mathalpha}   
\usepackage{euscript}   
\usepackage{leftindex}
\allowdisplaybreaks[1]  
\numberwithin{equation}{section}    
\usepackage[svgnames,hyperref]{xcolor}
\definecolor{orng}{HTML}{F35400}
\definecolor{bleu}{HTML}{BCE6F2}
\definecolor{dblue}{HTML}{0455BF}
\definecolor{dgreen}{HTML}{02724A}
\definecolor{dgreen2}{HTML}{025951}
\definecolor{dred}{HTML}{D90404}
\definecolor{dviolet}{HTML}{42208C}
\definecolor{labelkey}{HTML}{025951}
\definecolor{refkey}{HTML}{025951}
\definecolor{refkey}{rgb}{0,0.6,0.0}
\definecolor{Brown}{rgb}{0.45,0.0,0.05}
\definecolor{dgreen}{rgb}{0.00,0.49,0.00}
\definecolor{dblue}{rgb}{0,0.18,0.75}
\definecolor{lblue}{rgb}{0,0.7,0.75}
\definecolor{dviolet}{HTML}{9400D3}
\definecolor{pblue}{rgb}{0.1176,0.5647,1}
\definecolor{nblue}{rgb}{0.2,0.3,1}
\definecolor{pgreen}{rgb}{0.1961,0.8039,0.1961}
\definecolor{ngreen}{rgb}{0.0,0.6,0.3}
\definecolor{pred}{rgb}{1.0,0.2706,0.0}
\definecolor{magenta}{HTML}{ff00ff}
\definecolor{hotmagenta}{rgb}{1.0, 0.11, 0.81}
\definecolor{dorng}{rgb}{0.91,0.41,0.17}
\definecolor{dgray}{rgb}{0.41,0.41,0.41}
\definecolor{azure}{rgb}{0.0, 0.5, 1.0}
\usepackage{tikz,tkz-euclide,pgfplots}
\usetikzlibrary{arrows,decorations}
\addtokomafont{section}{\centering}

\usepackage{enumitem}
\setlist{itemsep=-2.0pt}

\usepackage{upref}  
\usepackage{hyperref}
\hypersetup{colorlinks=true,
linktocpage=true,
linkcolor=dblue,
citecolor=dgreen,
urlcolor=dred,
pdfencoding=auto,
hypertexnames=false}
\makeatletter
\g@addto@macro\th@plain{
\thm@headfont{\bfseries\sffamily}
\thm@notefont{}}
\g@addto@macro\th@definition{
\thm@headfont{\bfseries\sffamily}
\thm@notefont{}}
\g@addto@macro\th@remark{
\thm@headfont{\bfseries\sffamily}
\thm@notefont{}}
\makeatother
\theoremstyle{plain}
\newtheorem{theorem}{Theorem}[section]

\newtheorem{corollary}[theorem]{Corollary}
\newtheorem{lemma}[theorem]{Lemma}

\theoremstyle{definition}
\newtheorem{definition}[theorem]{Definition}

\theoremstyle{remark}
\newtheorem{remark}[theorem]{Remark}

\usepackage{epsfig}
\usepackage[usenames,dvipsnames]{pstricks}
\usepackage{pstricks-add}
\usepackage{pst-grad}
\usepackage{pst-plot} 
\usepackage{pst-node} 
\usepackage{pst-eucl} 
\usepackage{pst-coil} 
\usepackage[extdef=true]{delimset}
\DeclareMathDelimiterSet{\scal}[2]{
\selectdelim[l]<{#1}
\mathpunct{}\selectdelim[p]|
{#2}\selectdelim[r]>}
\DeclareMathDelimiterSet{\EC}[2]{
\mathsf{E}\selectdelim[l]({#1}
\mathpunct{}\selectdelim[p]|
{#2}\selectdelim[r])}

\newcommand{\menge}[2]{\bigl\{{#1}\mid{#2}\bigr\}} 
\DeclareMathDelimiterSet{\Menge}[2]{\selectdelim[l]\{
{#1}\selectdelim[m]|{#2}\selectdelim[r]\}}

\makeatletter
\def\upintkern@{\mkern-7mu\mathchoice{\mkern-3.5mu}{}{}{}}
\def\upintdots@{\mathchoice{\mkern-4mu\@cdots\mkern-4mu}%
{{\cdotp}\mkern1.5mu{\cdotp}\mkern1.5mu{\cdotp}}%
{{\cdotp}\mkern1mu{\cdotp}\mkern1mu{\cdotp}}%
{{\cdotp}\mkern1mu{\cdotp}\mkern1mu{\cdotp}}}
\makeatother
\DeclareFontFamily{OMX}{mdbch}{}
\DeclareFontShape{OMX}{mdbch}{m}{n}{ <->s * [0.8]  mdbchr7v }{}
\DeclareFontShape{OMX}{mdbch}{b}{n}{ <->s * [0.8]  mdbchb7v }{}
\DeclareFontShape{OMX}{mdbch}{bx}{n}{<->ssub * mdbch/b/n}{}
\DeclareSymbolFont{uplargesymbols}{OMX}{mdbch}{m}{n}
\SetSymbolFont{uplargesymbols}{bold}{OMX}{mdbch}{b}{n}
\DeclareMathSymbol{\upintop}{\mathop}{uplargesymbols}{82}
\DeclareMathSymbol{\upointop}{\mathop}{uplargesymbols}{"48}
\makeatletter
\renewcommand{\int}{\DOTSI\upintop\ilimits@}
\renewcommand{\oint}{\DOTSI\upointop\ilimits@}
\makeatother

\newcommand{\bunder}[2][4]{\mathrlap{\mkern\the\numexpr#1/2mu%
\relax\underline{\phantom{\mathrm{#2}\mkern-#1mu}}}#2}
\newcommand{\RR}{\mathbb{R}}
\newcommand{\NN}{\mathbb{N}}

\newcommand{\AS}{\mathsf{A}}
\newcommand{\fS}{\mathsf{f}}

\newcommand{\TS}{\mathsf{T}}

\newcommand{\HS}{\mathsf{H}}

\newcommand{\zS}{\mathsf{z}}
\newcommand{\nnn}{\mathsf{n}\in\mathbb{N}}
\newcommand{\jjj}{\mathsf{j}\in\mathbb{N}}

\newcommand{\nS}{{\mathsf{n}}}

\newcommand{\jS}{{\mathsf{j}}}
\newcommand{\kS}{{\mathsf{k}}}
\newcommand{\xS}{{\mathsf{x}}}
\newcommand{\yS}{{\mathsf{y}}}

\newcommand{\BE}{\EuScript{B}}
\newcommand{\FE}{\EuScript{F}}

\newcommand{\elll}{\ensuremath{\mbox{\large$\ell$}}}

\newcommand{\pinf}{{+}\infty}
\newcommand{\minf}{{-}\infty}
\newcommand{\zeroun}{\intv[o]{0}{1}}

\newcommand{\RX}{\intv[l]0{\minf}{\pinf}}
\newcommand{\RP}{\intv[r]0{0}{\pinf}}
\newcommand{\RPP}{\intv[o]0{0}{\pinf}}

\newcommand{\emp}{\varnothing}

\newcommand{\Int}{\displaystyle\int}

\newcommand{\pushfwd}%
{\ensuremath{\mbox{\Large$\,\triangleright\,$}}}

\DeclareMathOperator{\Argmin}{Argmin}
\DeclareMathOperator{\argmin}{argmin}
\newcommand{\Id}{\mathsf{Id}}

\newcommand{\moyo}[2]{\leftindex[I]^{#2}{#1}}

\DeclareMathOperator{\dom}{dom}

\DeclareMathOperator{\Fix}{Fix}
\DeclareMathOperator{\gra}{gra}
\DeclareMathOperator{\zer}{zer}

\DeclareMathOperator{\prox}{prox}

\newcommand{\EE}{\mathsf{E}}
\newcommand{\PP}{\mathsf{P}}

\renewcommand{\leq}{\leqslant}
\renewcommand{\geq}{\geqslant}

\newcommand{\weakly}{\rightharpoonup}

\newcommand{\Pas}{\text{\normalfont$\PP$-a.s.}}

\renewenvironment{abstract}{%
\vspace*{-0.50cm}
\small
\quotation%
\noindent%
{\normalfont\bfseries\sffamily
\nobreak\abstractname\ }%
}{%
\endquotation%
\medskip
}
\renewcommand{\abstractname}{Abstract.}
\newcommand\keywordsname{Keywords.}
\newenvironment{keywords}
{\renewcommand\abstractname{\keywordsname}\begin{abstract}}
{\end{abstract}}

\usepackage[auth-sc]{authblk}
\newcommand{\email}[1]{\href{mailto:#1}{\nolinkurl{#1}}}
\renewcommand*\Affilfont{\normalfont\normalsize}
\newcommand\affilcr{\protect\\ \protect\Affilfont}
\makeatletter
\renewcommand\AB@affilsepx{\protect\\[0.5em]}
\makeatother

\author[1]{Patrick L. Combettes}
\affil[1]{North Carolina State University
\affilcr
Department of Mathematics
\affilcr
Raleigh, NC 27695, USA
\affilcr
\email{plc@math.ncsu.edu}
}
\author[2]{Javier I. Madariaga}
\affil[2]{North Carolina State University
\affilcr
Department of Mathematics
\affilcr
Raleigh, NC 27695, USA
\affilcr
\email{jimadari@ncsu.edu}
}

\begin{document}

\title{ 
Almost Supermartingale Extensions of\\
Olivier's Theorem\thanks{Contact author: 
P. L. Combettes. Email: \email{plc@math.ncsu.edu}.
Phone: +1 919 515 2671.
This work was supported by the National
Science Foundation under grant DMS-2513409.
}}

\date{~}

\maketitle

\thispagestyle{empty}
\begin{abstract} 
Olivier's 1827 theorem provides a rate of convergence to zero of
the general term of a decreasing summable sequence of positive
reals. We derive stochastic extensions of this result in the
context of almost supermartingales. The results are applied to
the analysis of stochastic iterative processes.
\end{abstract}

\begin{keywords}
Olivier's theorem, 
Robbins--Siegmund theorem, 
stochastic gradient method, 
stochastic Krasnosel'ski\u\i--Mann iterations,
stochastic proximal point algorithm.
\end{keywords}

\newpage

\section{Introduction}
\label{sec:1}

In 1827, Olivier \cite{Oliv27} argued that a decreasing
sequence $(\upxi_\nS)_{\nS\in\NN}$ in $\RP$ is summable if and only
if $\nS\upxi_\nS\to 0$. The sufficiency statement was shown to be
wrong by Abel \cite{Abel28}, who proposed as a counterexample
$(\forall\nnn)$ $\upxi_{\nS}=1/((\nS+2)\ln(\nS+2))$. Nonetheless, 
the necessity statement is correct and it constitutes what is now
known as Olivier's theorem.

\begin{theorem}[Olivier]
\label{t:o}
Let $(\upxi_\nS)_{\nnn}$ be a summable decreasing sequence in 
$\RP$. Then $\upxi_\nS=\mathrm{o}(1/\nS)$.
\end{theorem}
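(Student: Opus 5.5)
The plan is to use the classical Cauchy-type block argument. Monotonicity lets us bound the general term from above by an average of a tail segment. Summability then forces every tail segment to be small. Concretely, for every $\nS\geq 1$ we have, since $(\upxi_\kS)_{\kS\in\NN}$ is decreasing and lies in $\RP$,
\begin{equation*}
\bigl(\nS-\lfloor\nS/2\rfloor\bigr)\upxi_\nS\leq
\sum_{\kS=\lfloor\nS/2\rfloor+1}^{\nS}\upxi_\kS\leq
\sum_{\kS=\lfloor\nS/2\rfloor+1}^{\pinf}\upxi_\kS .
\end{equation*}
The left-hand factor satisfies $\nS-\lfloor\nS/2\rfloor\geq\nS/2$. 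Hence $0\leq\nS\upxi_\nS\leq 2\sum_{\kS>\lfloor\nS/2\rfloor}\upxi_\kS$.

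The second step is to let $\nS\to\pinf$. Summability of $(\upxi_\kS)_{\kS\in\NN}$ means the tails $\sum_{\kS>\mathsf{m}}\upxi_\kS$ tend to $0$ as $\mathsf{m}\to\pinf$. Since $\lfloor\nS/2\rfloor\to\pinf$, the right-hand side tends to $0$. Therefore $\nS\upxi_\nS\to 0$, which is exactly $\upxi_\nS=\mathrm{o}(1/\nS)$.

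There is no real obstacle here. The only point requiring care is the index bookkeeping in the block $\{\lfloor\nS/2\rfloor+1,\ldots,\nS\}$: it contains $\nS-\lfloor\nS/2\rfloor\geq\nS/2$ terms, each at least $\upxi_\nS$. This holds for every $\nS\geq1$, even or odd.

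Equivalently, one could argue with the Cauchy criterion: given $\varepsilon>0$, pick $\mathsf{N}$ such that $\sum_{\kS=\mathsf{m}+1}^{\nS}\upxi_\kS<\varepsilon$ for all $\nS>\mathsf{m}\geq\mathsf{N}$. Then take $\mathsf{m}=\lfloor\nS/2\rfloor$ for $\nS\geq 2\mathsf{N}+2$.
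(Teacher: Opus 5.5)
Your proof is correct. The paper quotes Theorem~\ref{t:o} as a classical result and gives no direct proof, but its machinery recovers it as the case $\upalpha_\nS\equiv 1$, $\uptheta_\nS=\nS+1$, $\updelta_\nS=\upvarepsilon_\nS=0$ of Corollary~\ref{c:2}, that is, Theorem~\ref{t:1}\ref{t:1ii} with a trivial filtration. That route is genuinely different from yours.

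In the paper's route one sets $\beta_\nS=(\nS+1)\upxi_\nS$ and uses only the one-step inequality $\upxi_{\nS+1}\leq\upxi_\nS$ to get $\beta_{\nS+1}=(\nS+1)\upxi_{\nS+1}+\upxi_{\nS+1}\leq\beta_\nS+\upxi_{\nS+1}$. Corollary~\ref{c:3} (Robbins--Siegmund) then shows that $(\beta_\nS)$ converges. The limit is identified as $0$ by contradiction: $\beta_\nS\geq\upepsilon$ eventually would force $\sum\upxi_\nS\geq\upepsilon\sum 1/(\nS+1)=\pinf$, by Abel--Dini (Lemma~\ref{l:4}).

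Your block argument is shorter and self-contained. It is also quantitative, since it bounds $\nS\upxi_\nS$ explicitly by $2\sum_{\kS>\lfloor\nS/2\rfloor}\upxi_\kS$; the paper's two-stage ``convergence plus $\varliminf=0$'' argument gives no such bound.

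The paper's argument, however, needs only a one-step recursive inequality. That is what survives when monotonicity is relaxed to $\upxi_{\nS+1}\leq(1+\updelta_\nS)\upxi_\nS+\upvarepsilon_\nS$, and above all when it holds only in conditional expectation, as in \eqref{e:c1}. In that stochastic setting there is no pathwise comparison $\upxi_\kS\geq\upxi_\nS$ across a block, so your argument does not carry over directly, whereas the Robbins--Siegmund route does.
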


An improvement of Olivier's theorem that includes weights in the
series can be found in \cite{Vall14}.

\begin{lemma}[\protect{\cite[p.~408]{Vall14}}]
\label{l:2}
Let $(\upalpha_\nS)_{\nnn}$ be a sequence in $\RPP$ such that
$\sum_{\nnn}\upalpha_{\nS}=\pinf$ and define $(\forall\nnn)$
$\uptheta_\nS=\sum_{\kS=0}^{\nS}\upalpha_\kS$. Further, let
$(\upxi_\nS)_{\nnn}$ be a sequence in $\RP$ such that 
$(\upxi_\nS)_{\nnn}$ is decreasing and 
$\sum_{\nnn}\upalpha_\nS\upxi_{\nS}<\pinf$. Then
$\upxi_\nS=\mathrm{o}(1/\uptheta_\nS)$.
\end{lemma}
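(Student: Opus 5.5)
The plan is to adapt the classical Cauchy-type "block" argument behind Theorem~\ref{t:o}, with partial sums of the weights $\upalpha_\kS$ playing the role of the counting measure. Monotonicity bounds the tail sums of $\sum_{\nnn}\upalpha_\nS\upxi_\nS$ from below by multiples of $\upxi_\nS$. Divergence of $\sum_{\nnn}\upalpha_\nS$ then lets us discard the initial segment of the weights.

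First, fix $\varepsilon\in\RPP$. Since $\sum_{\nnn}\upalpha_\nS\upxi_\nS<\pinf$, the Cauchy criterion gives $\mathsf{m}\in\NN$ such that
\begin{equation*}
(\forall\nS\in\NN)\quad \nS>\mathsf{m}\;\Rightarrow\;\sum_{\kS=\mathsf{m}+1}^{\nS}\upalpha_\kS\upxi_\kS\leq\varepsilon.
\end{equation*}
Next, for every $\nS>\mathsf{m}$, the decreasing property gives $\upxi_\kS\geq\upxi_\nS$ for $\mathsf{m}+1\leq\kS\leq\nS$. Since the $\upalpha_\kS$ are positive, this yields
\begin{equation*}
(\uptheta_\nS-\uptheta_{\mathsf{m}})\upxi_\nS
=\sum_{\kS=\mathsf{m}+1}^{\nS}\upalpha_\kS\upxi_\nS
\leq\sum_{\kS=\mathsf{m}+1}^{\nS}\upalpha_\kS\upxi_\kS\leq\varepsilon .
\end{equation*}
It follows that
\begin{equation*}
\uptheta_\nS\upxi_\nS\leq\varepsilon+\uptheta_{\mathsf{m}}\upxi_\nS .
\end{equation*}

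The remaining task is to show that $\uptheta_{\mathsf{m}}\upxi_\nS\to0$ as $\nS\to\pinf$ with $\mathsf{m}$ held fixed, which amounts to proving $\upxi_\nS\to0$. This is the only point that needs care, and I expect it to be the main (though mild) obstacle. Since $(\upxi_\nS)_{\nnn}$ is decreasing in $\RP$, it converges to some $\upxi\geq0$, and $\upxi_\nS\geq\upxi$ for every $\nS$. If $\upxi>0$, then $\sum_{\nnn}\upalpha_\nS\upxi_\nS\geq\upxi\sum_{\nnn}\upalpha_\nS=\pinf$, which is a contradiction. Hence $\upxi_\nS\to0$.

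Combining the steps, $\limsup_{\nS}\uptheta_\nS\upxi_\nS\leq\varepsilon$. Since $\varepsilon$ is arbitrary and $\uptheta_\nS\upxi_\nS\geq0$, we conclude $\uptheta_\nS\upxi_\nS\to0$, that is, $\upxi_\nS=\mathrm{o}(1/\uptheta_\nS)$. Taking $\upalpha_\nS\equiv1$ recovers Theorem~\ref{t:o}, since then $\uptheta_\nS=\nS+1$.
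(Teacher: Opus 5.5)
Your proof is correct, but it follows a different route from the paper. The paper does not prove Lemma~\ref{l:2} itself; it quotes it from de la Vall\'ee Poussin. It does, however, recover it as the special case $\updelta_\nS=\upvarepsilon_\nS=0$ of Corollary~\ref{c:2}, that is, of Theorem~\ref{t:1}\ref{t:1ii} with the trivial filtration. That argument has three steps:
\begin{enumerate}
\item Set $\beta_\nS=\uptheta_\nS\upxi_\nS$ and note the one-step inequality $\beta_{\nS+1}=\uptheta_\nS\upxi_{\nS+1}+\upalpha_{\nS+1}\upxi_{\nS+1}\leq\beta_\nS+\upalpha_{\nS+1}\upxi_{\nS+1}$.
\item Deduce from Corollary~\ref{c:3} (Robbins--Siegmund) that $(\beta_\nS)_{\nnn}$ converges.
\item Identify the limit as $0$ with the Abel--Dini theorem (Lemma~\ref{l:4}): if $\beta_\nS\geq\upepsilon>0$ eventually, then $\upalpha_\nS\upxi_\nS\geq\upepsilon\upalpha_\nS/\uptheta_\nS$, whose sum diverges.
\end{enumerate}
You instead apply monotonicity over a whole block, $(\uptheta_\nS-\uptheta_{\mathsf{m}})\upxi_\nS\leq\sum_{\kS=\mathsf{m}+1}^{\nS}\upalpha_\kS\upxi_\kS$, and add the separate observation $\upxi_\nS\to0$ to absorb $\uptheta_{\mathsf{m}}\upxi_\nS$. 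Your argument is fully elementary and self-contained: it needs neither Abel--Dini nor Robbins--Siegmund, and it gives the explicit estimate $\uptheta_\nS\upxi_\nS\leq\upepsilon+\uptheta_{\mathsf{m}}\upxi_\nS$. The paper's decomposition (convergence of $\uptheta_\nS\upxi_\nS$ first, then $\varliminf\uptheta_\nS\upxi_\nS=0$ from the divergence of $\sum_{\nnn}\upalpha_\nS/\uptheta_\nS$) buys something different. It uses only the one-step relation $\upxi_{\nS+1}\leq(1+\updelta_\nS)\upxi_\nS+\upvarepsilon_\nS$, which still works when that relation holds merely in conditional expectation. Your pathwise comparison $\upxi_\kS\geq\upxi_\nS$ for all $\kS\leq\nS$ has no counterpart in the almost supermartingale setting of Theorem~\ref{t:1}.
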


Another extension of Olivier's theorem replaces the monotonicity
property of $(\upxi_\nS)_{\nnn}$ by a quasi-monotonicity property
which goes back to \cite{Szas48}.

\begin{lemma}[\protect{\cite[Theorem~1]{Robe68}}]
\label{l:7}
Let $(\upalpha_\nS)_{\nnn}$ be a sequence in $\RPP$ such that
$\sum_{\nnn}\upalpha_{\nS}=\pinf$ and define $(\forall\nnn)$
$\uptheta_\nS=\sum_{\kS=0}^{\nS}\upalpha_\kS$. Further, let
$(\upxi_\nS)_{\nnn}$ and $(\upvarepsilon_\nS)_{\nnn}$ be sequences
in $\RP$ such that 
$\sum_{\nnn}\upalpha_\nS\upxi_{\nS}<\pinf$,
$\sum_{\nnn}\uptheta_{\nS}\upvarepsilon_{\nS}<\pinf$, and
$(\forall\nnn)$ $\upxi_{\nS+1}\leq
\upxi_{\nS}+\upvarepsilon_{\nS}$.
Then $\upxi_\nS=\mathrm{o}(1/\uptheta_{\nS})$.
\end{lemma}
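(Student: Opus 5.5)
We reduce to Lemma~\ref{l:2} by building a decreasing sequence that controls $(\upxi_\nS)_{\nnn}$ from above. Set
\begin{equation*}
(\forall\nnn)\quad \upeta_\nS=\upxi_\nS+\sum_{\kS\geq\nS}\upvarepsilon_\kS .
\end{equation*}
This is well defined, since $\uptheta_\kS\geq\uptheta_0=\upalpha_0>0$ gives $\sum_\kS\upvarepsilon_\kS\leq\upalpha_0^{-1}\sum_\kS\uptheta_\kS\upvarepsilon_\kS<\pinf$. The quasi-monotonicity condition then yields
\begin{equation*}
\upeta_{\nS+1}=\upxi_{\nS+1}+\sum_{\kS\geq\nS+1}\upvarepsilon_\kS\leq\upxi_\nS+\sum_{\kS\geq\nS}\upvarepsilon_\kS=\upeta_\nS ,
\end{equation*}
so $(\upeta_\nS)_{\nnn}$ is a decreasing sequence in $\RP$ with $\upxi_\nS\leq\upeta_\nS$.

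The key step is to show that $\sum_{\nnn}\upalpha_\nS\upeta_\nS<\pinf$. The $\upxi$ part is summable by hypothesis. For the tail part, we exchange the order of summation, which is legitimate because all terms are nonnegative:
\begin{equation*}
\sum_{\nnn}\upalpha_\nS\sum_{\kS\geq\nS}\upvarepsilon_\kS=\sum_{\kS\in\NN}\upvarepsilon_\kS\sum_{\nS=0}^{\kS}\upalpha_\nS=\sum_{\kS\in\NN}\uptheta_\kS\upvarepsilon_\kS<\pinf .
\end{equation*}
This identity is exactly why the hypothesis is weighted by $\uptheta_\nS$, and it is the only nontrivial point of the argument.

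Lemma~\ref{l:2} now applies to $(\upeta_\nS)_{\nnn}$ and gives $\uptheta_\nS\upeta_\nS\to0$. Since $0\leq\uptheta_\nS\upxi_\nS\leq\uptheta_\nS\upeta_\nS$, we obtain $\upxi_\nS=\mathrm{o}(1/\uptheta_\nS)$.

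If a self-contained proof is preferred instead of citing Lemma~\ref{l:2}, its content for a decreasing $\upeta$ is the following estimate. Given $\upepsilon>0$, choose $\mathsf{m}$ such that $\sum_{\kS>\mathsf{m}}\upalpha_\kS\upeta_\kS<\upepsilon$. For $\nS>\mathsf{m}$,
\begin{equation*}
(\uptheta_\nS-\uptheta_{\mathsf{m}})\upeta_\nS\leq\sum_{\kS=\mathsf{m}+1}^{\nS}\upalpha_\kS\upeta_\kS<\upepsilon .
\end{equation*}
Since $\uptheta_\nS\to\pinf$ and $\upeta_\nS\to0$, we have $\uptheta_{\mathsf{m}}\upeta_\nS\to0$, and therefore $\limsup_\nS\uptheta_\nS\upeta_\nS\leq\upepsilon$. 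Letting $\upepsilon\downarrow0$ gives the claim.
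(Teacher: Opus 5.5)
Your proof is correct, but it takes a different route from the paper's. The paper only cites this lemma. It recovers and extends it in Corollary~\ref{c:2} as the deterministic case of Theorem~\ref{t:1}\ref{t:1ii}, and that argument runs as follows:
\begin{itemize}
\item With $\beta_\nS=\uptheta_\nS\upxi_\nS$ one has $\beta_{\nS+1}\leq\beta_\nS+\uptheta_\nS\upvarepsilon_\nS+\upalpha_{\nS+1}\upxi_{\nS+1}$, a recursion with a summable perturbation.
\item Hence $(\beta_\nS)_{\nnn}$ converges, by Robbins--Siegmund in its deterministic form (Corollary~\ref{c:3}).
\item The limit must be $0$: if $\beta_\nS\geq\upepsilon>0$ eventually, then $\upalpha_\nS\upxi_\nS\geq\upepsilon\upalpha_\nS/\uptheta_\nS$, which contradicts the Abel--Dini theorem (Lemma~\ref{l:4}).
\end{itemize}

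You instead reduce to the monotone case, Lemma~\ref{l:2}, by absorbing the perturbation tails into the decreasing majorant $\upeta_\nS=\upxi_\nS+\sum_{\kS\geq\nS}\upvarepsilon_\kS$. The Tonelli identity $\sum_\nS\upalpha_\nS\sum_{\kS\geq\nS}\upvarepsilon_\kS=\sum_\kS\uptheta_\kS\upvarepsilon_\kS$ is a real merit of your route, because it shows exactly why $\uptheta_\kS$ is the natural weight on $\upvarepsilon_\kS$.

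What your construction gains in transparency it loses in portability. The tail sums $\sum_{\kS\geq\nS}\upvarepsilon_\kS$ are not adapted to the filtration. Moreover, when the quasi-monotonicity inequality holds only in conditional expectation, $\upeta_\nS$ is no longer pathwise decreasing, so Lemma~\ref{l:2} cannot be invoked and the argument does not carry over to Theorem~\ref{t:1}. The paper's route needs only almost sure convergence of $(\uptheta_\nS\upxi_\nS)_{\nnn}$ (cf.\ Remark~\ref{r:1}), which Robbins--Siegmund supplies in the stochastic setting; the $\varliminf$ step is then pathwise. In the deterministic setting, your method does handle the factors $1+\updelta_\nS$ of Corollary~\ref{c:2}, after first dividing $\upxi_\nS$ by the bounded products $\prod_{\kS<\nS}(1+\updelta_\kS)$.

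One small omission in your optional self-contained part: you use $\upeta_\nS\to0$ without justification. It holds because $(\upeta_\nS)_{\nnn}$ decreases to some $\mathsf{L}\geq0$, and $\mathsf{L}\sum_\nS\upalpha_\nS\leq\sum_\nS\upalpha_\nS\upeta_\nS<\pinf$ forces $\mathsf{L}=0$.
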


Olivier's theorem and its above variants have found a variety of
applications in applied analysis, e.g.,
\cite{Boas65,Svva21,MaPa18,Davi16,Lian16,Robe68,Sing78}. The
purpose of this paper is to derive stochastic versions of Olivier's
theorem to address applications based on probabilistic models. In
this respect, our main focus is on obtaining new convergence rates
for various stochastic iteration methods. In particular, our
analysis is applied to a stochastic version of the Br\'ezis--Lions
proximal point algorithm for finding zeros of monotone operators, a
proximal point algorithm with a new stochastic approximation scheme
for minimizing convex functions, a randomly relaxed
Krasnosel'ski\u\i--Mann method with stochastic approximations for
finding a fixed point of nonexpansive operators, and the stochastic
gradient method for convex minimization under broadened
assumptions.

Throughout, the underlying probability space 
$(\upOmega,\FE,\PP)$ is assumed to be complete. Let
$\mathfrak{F}=(\FE_{\nS})_{\nnn}$ be a filtration of $\FE$ and let
$\mathsf{S}$ be a Borel subset of $\RR$. Then
$\elll(\mathfrak{F};\mathsf{S})$ is the
set of sequences of $\mathsf{S}$-valued random variables
$(\alpha_{\nS})_{\nnn}$ which are adapted to $\mathfrak{F}$, i.e.,
for every $\nnn$, $\alpha_{\nS}$ is $\FE_{\nS}$-measurable. We set
\begin{equation}
\elll^1(\mathfrak{F};\mathsf{S})=
\Menge3{(\alpha_{\nS})_{\nnn}\in\elll(\mathfrak{F};\mathsf{S})}
{\sum_{\nnn}\abs{\alpha_{\nS}}<\pinf\;\Pas}.
\end{equation}

Our main result is the following.

\begin{theorem}
\label{t:1}
Let $\mathfrak{F}=(\FE_{\nS})_{\nnn}$ be a filtration of $\FE$. Let
$(\alpha_{\nS})_{\nnn}\in\elll(\mathfrak{F};\RPP)$ be such that
$\sum_{\nnn}\alpha_{\nS}=\pinf\;\Pas$ and define $(\forall\nnn)$
$\theta_\nS=\sum_{\kS=0}^{\nS}\alpha_\kS$. Further, 
let $(\xi_{\nS})_{\nnn}\in\elll(\mathfrak{F};\RP)$, 
$(\delta_{\nS})_{\nnn}\in\elll^1(\mathfrak{F};\RP)$, and
$(\varepsilon_{\nS})_{\nnn}\in\elll(\mathfrak{F};\RP)$.
Suppose that the almost supermartingale property
\begin{equation}
\label{e:c1}
(\forall\nnn)\quad\EC{\xi_{\nS+1}}{\FE_{\nS}}
\leq(1+\delta_{\nS})\xi_{\nS}+\varepsilon_{\nS}\;\Pas
\end{equation}
is satisfied and that
$\sum_{\nnn}\theta_{\nS}\varepsilon_{\nS}<\pinf\;\Pas$
Then the following hold:
\begin{enumerate}
\item
\label{t:1i}
Suppose that 
$\sum_{\nnn}\alpha_{\nS}\xi_{\nS}<\pinf\;\Pas$
Then $\xi_{\nS+1}=\mathrm{o}(1/\theta_\nS)\;\Pas$
\item
\label{t:1ii}
Suppose that 
$\sum_{\nnn}\EC{\alpha_{\nS+1}\xi_{\nS+1}}{\FE_\nS}<\pinf\;\Pas$
Then $\xi_{\nS}=\mathrm{o}(1/\theta_\nS)\;\Pas$
\item
\label{t:1iii}
Suppose that 
$\sum_{\nnn}\EE\brk{\theta_{\nS}\varepsilon_{\nS}}<\pinf$,
$\sum_{\nnn}\EE\brk{\alpha_{\nS}\xi_{\nS}}<\pinf$,
$\sum_{\nnn}\EE\delta_{\nS}<\pinf$,
and, for every $\nnn$, $\delta_{\nS}$ is independent of
$(\xi_{\nS},\theta_{\nS})$.
Then $\xi_{\nS}=\mathrm{o}(1/\theta_\nS)\;\Pas$ and
$\EE{\sqrt{\xi_{\nS}\theta_{\nS}}}\to 0$.
\end{enumerate}
\end{theorem}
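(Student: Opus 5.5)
The plan is to reduce everything to the Robbins--Siegmund theorem applied to a weighted process $\zeta$ built from $\theta$ and $\xi$. Robbins--Siegmund gives almost sure convergence of $\zeta$. A divergence argument of Abel--Dini type then forces the limit to be $0$.

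\emph{Part~\ref{t:1i}.} Set $\zeta_{\nS+1}=\theta_\nS\xi_{\nS+1}$, which is $\FE_{\nS+1}$-measurable. Since $\theta_\nS$ is $\FE_\nS$-measurable and $\theta_\nS=\theta_{\nS-1}+\alpha_\nS$, \eqref{e:c1} gives
\begin{equation*}
\EC{\zeta_{\nS+1}}{\FE_\nS}\leq(1+\delta_\nS)\zeta_\nS+(1+\delta_\nS)\alpha_\nS\xi_\nS+\theta_\nS\varepsilon_\nS\;\Pas
\end{equation*}
Because $\sum_{\nnn}\delta_\nS<\pinf$, the sequence $(\delta_\nS)_{\nnn}$ is bounded a.s. Hence the error terms $(1+\delta_\nS)\alpha_\nS\xi_\nS+\theta_\nS\varepsilon_\nS$ are a.s. summable. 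Robbins--Siegmund then yields $\zeta_\nS\to\zeta$ a.s. for some $\RP$-valued $\zeta$.

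To identify the limit, write $\alpha_{\nS+1}\xi_{\nS+1}=(\alpha_{\nS+1}/\theta_\nS)\zeta_{\nS+1}$. The Abel--Dini theorem applied pathwise gives $\sum_{\nnn}\alpha_{\nS+1}/\theta_\nS\geq\sum_{\nnn}\alpha_{\nS+1}/\theta_{\nS+1}=\pinf$. On the event $\{\zeta>0\}$ this would make $\sum_{\nnn}\alpha_{\nS}\xi_{\nS}$ diverge, which is excluded. So $\zeta=0$ a.s., that is, $\xi_{\nS+1}=\mathrm{o}(1/\theta_\nS)$.

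\emph{Part~\ref{t:1ii}.} Here $\theta_{\nS+1}$ is not $\FE_\nS$-measurable, so I work instead with $\zeta_\nS=\theta_\nS\xi_\nS$. Then
\begin{equation*}
\EC{\zeta_{\nS+1}}{\FE_\nS}=\EC{\alpha_{\nS+1}\xi_{\nS+1}}{\FE_\nS}+\theta_\nS\EC{\xi_{\nS+1}}{\FE_\nS}\leq(1+\delta_\nS)\zeta_\nS+\theta_\nS\varepsilon_\nS+\EC{\alpha_{\nS+1}\xi_{\nS+1}}{\FE_\nS}.
\end{equation*}
All perturbations are a.s. summable, so Robbins--Siegmund gives $\zeta_\nS\to\zeta$ a.s.

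Next I need $\sum_{\nnn}\alpha_{\nS+1}\xi_{\nS+1}<\pinf$ a.s. This follows from the a.s. finiteness of the sum of conditional expectations, by the conditional Borel--Cantelli/L\'evy extension for nonnegative adapted sequences. For a self-contained route, apply Robbins--Siegmund to the partial sums $\sum_{\kS\leq\nS}\alpha_{\kS}\xi_{\kS}$, which form a submartingale with summable compensator. The same divergence argument, now using $\sum_{\nnn}\alpha_\nS/\theta_\nS=\pinf$, then gives $\zeta=0$ a.s.

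\emph{Part~\ref{t:1iii}.} By monotone convergence, the three expectation hypotheses give a.s. finiteness of $\sum_{\nnn}\theta_\nS\varepsilon_\nS$ and $\sum_{\nnn}\delta_\nS$. They also give a.s. finiteness of $\sum_{\nnn}\EC{\alpha_{\nS+1}\xi_{\nS+1}}{\FE_\nS}$, since its expectation is $\sum_{\nnn}\EE[\alpha_{\nS+1}\xi_{\nS+1}]<\pinf$. Part~\ref{t:1ii} therefore gives $\theta_\nS\xi_\nS\to0$ a.s.

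Taking expectations in the inequality of part~\ref{t:1ii}, and using the independence of $\delta_\nS$ and $(\xi_\nS,\theta_\nS)$ to get $\EE[\delta_\nS\zeta_\nS]=\EE\delta_\nS\,\EE\zeta_\nS$, yields the deterministic recursion
\begin{equation*}
\EE\zeta_{\nS+1}\leq(1+\EE\delta_\nS)\EE\zeta_\nS+\EE[\theta_\nS\varepsilon_\nS]+\EE[\alpha_{\nS+1}\xi_{\nS+1}].
\end{equation*}
The perturbations in this recursion are summable, so $\sup_{\nnn}\EE\zeta_\nS<\pinf$. Consequently $(\sqrt{\zeta_\nS})_{\nnn}$ is bounded in $L^2$ and hence uniformly integrable. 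Since $\sqrt{\zeta_\nS}\to0$ a.s., Vitali's theorem gives $\EE\sqrt{\xi_\nS\theta_\nS}\to0$.

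The main obstacle is the nonpredictability of $\theta_{\nS+1}$ in part~\ref{t:1ii}. It forces the extra conditional term and requires passing from conditional summability to pathwise summability before the divergence argument can be run. The independence assumption in part~\ref{t:1iii} plays the analogous role of decoupling $\delta_\nS$ from $\zeta_\nS$ once expectations are taken.
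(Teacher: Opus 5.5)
Your proposal is correct and follows the paper's proof essentially step by step. It uses the same weighted processes $\theta_{\nS}\xi_{\nS+1}$ and $\theta_{\nS}\xi_{\nS}$ fed into Robbins--Siegmund, the same passage from conditional to pathwise summability via Robbins--Siegmund on partial sums (the paper's Corollary~\ref{c:4}), and the same $L^2$-boundedness plus Vitali argument in part~\ref{t:1iii}. The only differences are cosmetic: in part~\ref{t:1i} you obtain $\sum_{\nS}\alpha_{\nS+1}/\theta_{\nS}=\pinf$ from Abel--Dini and $\theta_{\nS}\leq\theta_{\nS+1}$ rather than quoting Abel's lemma, and you leave implicit the conventions $\zeta_0=0$ in part~\ref{t:1i} and $\EE\zeta_0=\EE(\alpha_0\xi_0)<\pinf$ in part~\ref{t:1iii}, which the paper states explicitly.
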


Section~\ref{sec:2} is dedicated to proving Theorem~\ref{t:1} and
it also features special cases of interest. Applications to the
analysis of stochastic iteration schemes are presented in
Section~\ref{sec:3}. Throughout, we use sans-serif letters to
denote deterministic variables and italicized serif letters to
denote random variables. 

\section{Proof and special cases}
\label{sec:2}

\subsection{Preliminary results}
\label{sec:21}

In his study of Olivier's result \cite{Oliv27}, Abel \cite{Abel28}
proved the following property.

\begin{lemma}[Abel]
\label{l:1}
Let $(\upalpha_\nS)_{\nnn}$ be a sequence in $\RPP$ such that
$\sum_{\nnn}\upalpha_\nS=\pinf$ and define $(\forall\nnn)$
$\uptheta_\nS=\sum_{\kS=0}^{\nS}\upalpha_\kS$. Then
$\sum_{\nnn}\upalpha_{\nS+1}/\uptheta_\nS=\pinf$.
\end{lemma}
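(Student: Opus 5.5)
We argue by contradiction and compare $\sum\upalpha_{\nS+1}/\uptheta_\nS$ with a telescoping logarithm. Since every $\upalpha_\kS>0$, the sequence $(\uptheta_\nS)_{\nnn}$ is strictly increasing and positive, and $\uptheta_\nS\to\pinf$ because $\sum_{\nnn}\upalpha_\nS=\pinf$. For every $\nnn$ we have
\begin{equation*}
\frac{\upalpha_{\nS+1}}{\uptheta_\nS}=\frac{\uptheta_{\nS+1}-\uptheta_\nS}{\uptheta_\nS}=\frac{\uptheta_{\nS+1}}{\uptheta_\nS}-1 .
\end{equation*}
Set $\mathsf{r}_\nS=\uptheta_{\nS+1}/\uptheta_\nS>1$. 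The elementary inequality $\mathsf{r}-1\geq\ln\mathsf{r}$ gives
\begin{equation*}
\sum_{\kS=0}^{\nS}\frac{\upalpha_{\kS+1}}{\uptheta_\kS}\geq\sum_{\kS=0}^{\nS}\ln\frac{\uptheta_{\kS+1}}{\uptheta_\kS}=\ln\uptheta_{\nS+1}-\ln\uptheta_0 .
\end{equation*}
The right-hand side tends to $\pinf$, so the series diverges.

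This argument has essentially no obstacle: it does not matter whether the ratios $\mathsf{r}_\nS$ stay bounded, because $\mathsf{r}-1\geq\ln\mathsf{r}$ holds uniformly for all $\mathsf{r}>0$. The one point to check is that $\uptheta_0=\upalpha_0>0$, so that the logarithms are defined. This holds because $(\upalpha_\nS)_{\nnn}$ lies in $\RPP$.

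An alternative, in Abel's classical style, is a Cauchy-block argument. For fixed $\nS$ and any $\mathsf{m}>\nS$,
\begin{equation*}
\sum_{\kS=\nS}^{\mathsf{m}-1}\frac{\upalpha_{\kS+1}}{\uptheta_\kS}\geq\sum_{\kS=\nS}^{\mathsf{m}-1}\frac{\upalpha_{\kS+1}}{\uptheta_{\mathsf{m}}}=\frac{\uptheta_{\mathsf{m}}-\uptheta_\nS}{\uptheta_{\mathsf{m}}}\to 1 \quad\text{as }\mathsf{m}\to\pinf .
\end{equation*}
The tails therefore never become small, which contradicts the Cauchy criterion. I would present the logarithmic version as the main proof and mention this one as a remark.
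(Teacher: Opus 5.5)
Your proof is correct and complete. The paper gives no proof of Lemma~\ref{l:1}; it attributes the result to Abel \cite{Abel28}, so your argument supplies what the paper leaves to the literature. The logarithmic comparison is sound: $\upalpha_{\nS+1}/\uptheta_{\nS}=\uptheta_{\nS+1}/\uptheta_{\nS}-1\geq\ln(\uptheta_{\nS+1}/\uptheta_{\nS})$ telescopes to the explicit lower bound $\ln(\uptheta_{\nS+1}/\uptheta_0)$, which also quantifies the divergence. The opening phrase ``we argue by contradiction'' is superfluous, since the argument is direct.

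When choosing which version to present, keep in mind that the logarithmic trick is tied to Abel's form of the statement. For the Abel--Dini version (Lemma~\ref{l:4}), which the paper uses in the proof of Theorem~\ref{t:1}\ref{t:1ii}, the relevant term is $\upalpha_{\nS+1}/\uptheta_{\nS+1}=1-1/\mathsf{r}_{\nS}\leq\ln\mathsf{r}_{\nS}$. That inequality points the wrong way, so you would need a case split on whether the ratios $\mathsf{r}_{\nS}$ stay bounded.

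Your Cauchy-block argument works verbatim for the sharper statement. For $\mathsf{m}>\nS$, $\sum_{\kS=\nS+1}^{\mathsf{m}}\upalpha_{\kS}/\uptheta_{\kS}\geq(\uptheta_{\mathsf{m}}-\uptheta_{\nS})/\uptheta_{\mathsf{m}}\to 1$, which proves Lemma~\ref{l:4}. Lemma~\ref{l:1} then follows from $\uptheta_{\nS}\leq\uptheta_{\nS+1}$. So the block argument covers both lemmas the paper relies on with one uniform estimate, while the logarithmic argument gives a cleaner quantitative bound for Lemma~\ref{l:1} alone.
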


The following sharpening of Lemma~\ref{l:1} by Dini
\cite{Dini67} is known as the Abel--Dini theorem.

\begin{lemma}[Abel--Dini]
\label{l:4}
Let $(\upalpha_\nS)_{\nnn}$ be a sequence in $\RPP$ such that
$\sum_{\nnn}\upalpha_\nS=\pinf$ and define $(\forall\nnn)$
$\uptheta_\nS=\sum_{\kS=0}^{\nS}\upalpha_\kS$. Then
$\sum_{\nnn}\upalpha_{\nS}/\uptheta_\nS=\pinf$.
\end{lemma}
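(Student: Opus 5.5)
We prove Lemma~\ref{l:4} (Abel--Dini) by a Cauchy-type block argument, exactly as in the classical proof of the divergence of the harmonic series. Since $(\uptheta_\nS)_{\nnn}$ is strictly increasing and $\uptheta_\nS\to\pinf$, for $\mathsf{m}<\nS$ we have
\begin{equation*}
\sum_{\kS=\mathsf{m}+1}^{\nS}\frac{\upalpha_\kS}{\uptheta_\kS}
\geq\frac{1}{\uptheta_\nS}\sum_{\kS=\mathsf{m}+1}^{\nS}\upalpha_\kS
=\frac{\uptheta_\nS-\uptheta_{\mathsf{m}}}{\uptheta_\nS}
=1-\frac{\uptheta_{\mathsf{m}}}{\uptheta_\nS}.
\end{equation*}

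Fix $\mathsf{m}$. Because $\uptheta_\nS\to\pinf$, we can choose $\nS>\mathsf{m}$ with $\uptheta_\nS\geq2\uptheta_{\mathsf{m}}$, and then the block sum above is at least $1/2$. Hence the partial sums of $\sum_{\nnn}\upalpha_\nS/\uptheta_\nS$ violate the Cauchy criterion, and the series diverges.

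Equivalently, we can build indices $\mathsf{m}_0<\mathsf{m}_1<\cdots$ recursively with $\uptheta_{\mathsf{m}_{\jS+1}}\geq2\uptheta_{\mathsf{m}_\jS}$. Summing the disjoint blocks then gives $\sum_{\nnn}\upalpha_\nS/\uptheta_\nS\geq\sum_{\jjj}1/2=\pinf$.

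The only point needing care is that $\uptheta_\nS>0$, which holds since every $\upalpha_\kS>0$, so all quotients are well defined. The argument does not use Lemma~\ref{l:1}. Note, however, that Lemma~\ref{l:4} itself implies Lemma~\ref{l:1}, because $\upalpha_{\nS+1}/\uptheta_\nS\geq\upalpha_{\nS+1}/\uptheta_{\nS+1}$. There is no real obstacle here. The key step is the monotone lower bound of each block sum by $1-\uptheta_{\mathsf{m}}/\uptheta_\nS$.
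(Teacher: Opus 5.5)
Your proof is correct. The paper itself gives no proof of Lemma~\ref{l:4}: it records the statement as a classical fact and attributes it to Dini~\cite{Dini67}, just as Lemma~\ref{l:1} is attributed to Abel. So rather than a different route, you supply a self-contained argument where the paper relies on a citation.

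Your route is the standard block argument. Monotonicity of $(\uptheta_\nS)_{\nnn}$ gives $\sum_{\kS=\mathsf{m}+1}^{\nS}\upalpha_\kS/\uptheta_\kS\geq 1-\uptheta_{\mathsf{m}}/\uptheta_\nS$, and $\uptheta_\nS\to\pinf$ lets you cut the series into infinitely many disjoint blocks, each of mass at least $1/2$.

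Compare this with the other common proof, which writes $\upalpha_\nS/\uptheta_\nS=1-\uptheta_{\nS-1}/\uptheta_\nS$ and compares with the telescoping quantity $\ln(\uptheta_\nS/\uptheta_{\nS-1})$. Your version needs no case distinction. In the logarithmic version, the inequality $1-x\leq-\ln x$ points the wrong way for a lower bound. The comparison therefore only works asymptotically, after first disposing of the case $\upalpha_\nS/\uptheta_\nS\not\to 0$. The logarithmic route is more natural for Lemma~\ref{l:1}, where $\upalpha_{\nS+1}/\uptheta_\nS=\uptheta_{\nS+1}/\uptheta_\nS-1\geq\ln(\uptheta_{\nS+1}/\uptheta_\nS)$ holds for every $\nS$.

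Your closing observation is also correct and worth keeping. Since $\upalpha_{\nS+1}/\uptheta_\nS\geq\upalpha_{\nS+1}/\uptheta_{\nS+1}$, Lemma~\ref{l:4} implies Lemma~\ref{l:1}. Hence the contradiction step in the proof of Theorem~\ref{t:1}\ref{t:1i} could be run with Lemma~\ref{l:4} alone, and your argument would then make both preliminary lemmas self-contained.
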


The Robbins--Siegmund theorem \cite[Theorem~1]{Robb71} is an
essential tool in the study of the asymptotic behavior of
stochastic iterations \cite{Siop15,Dufl90,Fran22,Neri26}; see also
\cite{Ermo69,Glad65} for anterior special cases. 

\begin{lemma}[Robbins--Siegmund]
\label{l:5}
Let $\mathfrak{F}=(\FE_{\nS})_{\nnn}$ be a filtration of $\FE$.
Let $(\beta_{\nS})_{\nnn}\in\elll(\mathfrak{F};\RP)$,
$(\eta_{\nS})_{\nnn}\in\elll(\mathfrak{F};\RP)$, 
$(\delta_{\nS})_{\nnn}\in\elll^1(\mathfrak{F};\RP)$, and
$(\tau_{\nS})_{\nnn}\in\elll^1(\mathfrak{F};\RP)$ satisfying the
almost supermartingale property
\begin{equation}
(\forall\nnn)\quad\EC{\beta_{\nS+1}}{\FE_{\nS}}
+\eta_{\nS}\leq(1+\delta_{\nS})\beta_{\nS}+\tau_{\nS}\;\Pas
\end{equation}
Then $(\eta_{\nS})_{\nnn}\in\elll^1(\mathfrak{F};\RP)$ and
$(\beta_{\nS})_{\nnn}$ converges $\Pas$ to a $\RP$-valued random
variable.
\end{lemma}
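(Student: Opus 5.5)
The plan is the classical normalization-and-localization argument. First I remove the multiplicative factor $1+\delta_\nS$. Set $\pi_0=1$ and $\pi_\nS=\prod_{\kS=0}^{\nS-1}(1+\delta_\kS)$, so that $\pi_{\nS+1}$ is $\FE_\nS$-measurable and $\pi_\nS\geq 1$. Since $\sum_{\nnn}\delta_\nS<\pinf\;\Pas$ and $\ln(1+\delta_\kS)\leq\delta_\kS$, the sequence $(\pi_\nS)_{\nnn}$ increases $\Pas$ to a finite random variable $\pi_\infty\geq 1$. Define $\beta'_\nS=\beta_\nS/\pi_\nS$, $\eta'_\nS=\eta_\nS/\pi_{\nS+1}$ and $\tau'_\nS=\tau_\nS/\pi_{\nS+1}$. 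Dividing the hypothesis by the $\FE_\nS$-measurable quantity $\pi_{\nS+1}$ gives
\begin{equation*}
(\forall\nnn)\quad\EC{\beta'_{\nS+1}}{\FE_\nS}+\eta'_\nS\leq\beta'_\nS+\tau'_\nS\;\Pas
\end{equation*}
Moreover $\sum_{\nnn}\tau'_\nS\leq\sum_{\nnn}\tau_\nS<\pinf\;\Pas$.

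Next I set $U_\nS=\beta'_\nS+\sum_{\kS<\nS}\eta'_\kS-\sum_{\kS<\nS}\tau'_\kS$. The displayed inequality says exactly that $\EC{U_{\nS+1}}{\FE_\nS}\leq U_\nS$. However, $U_\nS$ need not be integrable and is not bounded below, so I localize in two ways. For $\mathsf{a}\in\NN$, let $\nu_{\mathsf{a}}=\inf\menge{\nS}{\sum_{\kS\leq\nS}\tau'_\kS>\mathsf{a}}$. This is a stopping time because $\sum_{\kS\leq\nS}\tau'_\kS$ is $\FE_\nS$-measurable. On $\{\kS<\nu_{\mathsf a}\}$ the partial sums are at most $\mathsf a$, hence $U_{\nS\wedge\nu_{\mathsf a}}+\mathsf a\geq0$. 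For $\mathsf{b}\in\NN$, I also multiply by the indicator of the $\FE_0$-event $\{\beta_0\leq\mathsf b\}$. This makes the stopped process $\bigl(1_{\{\beta_0\leq\mathsf b\}}(U_{\nS\wedge\nu_{\mathsf a}}+\mathsf a)\bigr)_{\nnn}$ a nonnegative supermartingale whose expectation is bounded by $\mathsf b+\mathsf a$. The supermartingale convergence theorem then shows that it converges $\Pas$ to a finite limit. On $\{\beta_0\leq\mathsf b\}\cap\{\sum_\kS\tau'_\kS\leq\mathsf a\}$ we have $\nu_{\mathsf a}=\pinf$, so $(U_\nS)_{\nnn}$ itself converges to a finite limit there. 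Taking the union over $\mathsf a,\mathsf b\in\NN$ exhausts $\upOmega$ up to a null set, so $(U_\nS)_{\nnn}$ converges $\Pas$ in $\RR$.

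Since $\sum_{\kS<\nS}\tau'_\kS$ also converges, $\beta'_\nS+\sum_{\kS<\nS}\eta'_\kS$ converges $\Pas$ to a finite limit. Both summands are nonnegative, and the second is nondecreasing, so it is bounded. Hence $\sum_{\nnn}\eta'_\nS<\pinf$, and consequently $(\beta'_\nS)_{\nnn}$ converges to a finite nonnegative limit. Undoing the normalization gives $\eta_\nS\leq\pi_\infty\eta'_\nS$, so $(\eta_\nS)_{\nnn}\in\elll^1(\mathfrak F;\RP)$. Likewise $\beta_\nS=\pi_\nS\beta'_\nS\to\pi_\infty\lim\beta'_\nS$, which is an $\RP$-valued limit.

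The step I expect to be delicate is the middle one: the conditional expectations are only the generalized ones for nonnegative variables. I would therefore check that the optional-stopping and indicator manipulations are legitimate for the truncated process. In particular, I would verify that multiplying by the $\FE_0$-indicator and adding the constant $\mathsf a$ preserves the supermartingale inequality for the stopped process without ever subtracting infinite quantities. This uses that on $\{\nS<\nu_{\mathsf a}\}$ the subtracted sum $\sum_{\kS\leq\nS}\tau'_\kS$ is bounded by $\mathsf a$.
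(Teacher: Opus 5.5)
Your proof is correct. The paper gives no proof of this lemma; it quotes \cite[Theorem~1]{Robb71}, and your argument is essentially the original Robbins--Siegmund proof: normalize by $\prod_{k<n}(1+\delta_k)$, compensate to obtain a supermartingale, stop when $\sum_{k\leq n}\tau'_k$ first exceeds $\mathsf{a}$, and apply the convergence theorem for nonnegative supermartingales. Your additional localization on the $\FE_0$-event $\{\beta_0\leq\mathsf{b}\}$ correctly handles the fact that the conditional expectations here are the generalized ones for nonnegative, possibly nonintegrable variables. So does writing the increment on $\{\nu_{\mathsf{a}}>n\}$ as a sum of nonnegative terms.
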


\begin{corollary}
\label{c:3}
Let $(\upbeta_{\nS})_{\nnn}$, $(\upeta_{\nS})_{\nnn}$, and
$(\uptau_{\nS})_{\nnn}$ be sequences in $\RP$ such that
$\sum_{\nnn}\uptau_{\nS}<\pinf$ and $(\forall\nnn)$ 
$\upbeta_{\nS+1}+\upeta_{\nS}\leq\upbeta_{\nS}+\uptau_{\nS}$. Then 
$\sum_{\nnn}\upeta_{\nS}<\pinf$ and $(\upbeta_{\nS})_{\nnn}$
converges.
\end{corollary}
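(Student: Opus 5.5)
Corollary~\ref{c:3} is the deterministic specialization of Lemma~\ref{l:5}. I would derive it by embedding the deterministic data into a stochastic framework. Take any filtration $\mathfrak{F}=(\FE_\nS)_{\nnn}$ of $\FE$; for instance $\FE_\nS=\{\emp,\upOmega\}$ for every $\nnn$ suffices. Define constant random variables
\begin{equation}
(\forall\nnn)\quad \beta_\nS\equiv\upbeta_\nS,\quad \eta_\nS\equiv\upeta_\nS,\quad \tau_\nS\equiv\uptau_\nS,\quad \delta_\nS\equiv 0 .
\end{equation}
Constant random variables are measurable with respect to every $\sigma$-algebra, so all four sequences belong to $\elll(\mathfrak{F};\RP)$.

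Since $\sum_{\nnn}\uptau_\nS<\pinf$ and $\delta_\nS=0$, the sequences $(\tau_\nS)_{\nnn}$ and $(\delta_\nS)_{\nnn}$ lie in $\elll^1(\mathfrak{F};\RP)$. Moreover, $\EC{\beta_{\nS+1}}{\FE_\nS}=\upbeta_{\nS+1}$ almost surely, because the conditional expectation of a constant is that constant. The deterministic hypothesis $\upbeta_{\nS+1}+\upeta_\nS\leq\upbeta_\nS+\uptau_\nS$ is therefore exactly the almost supermartingale inequality of Lemma~\ref{l:5} with $\delta_\nS=0$, and it holds everywhere, hence $\Pas$

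Lemma~\ref{l:5} then gives $\sum_{\nnn}\upeta_\nS<\pinf$ $\Pas$ and that $(\upbeta_\nS)_{\nnn}$ converges $\Pas$ to an $\RP$-valued limit. These events do not depend on $\omega$, so each is either empty or all of $\upOmega$. An event of probability one cannot be empty, so both conclusions hold outright. This gives $\sum_{\nnn}\upeta_\nS<\pinf$ and convergence of $(\upbeta_\nS)_{\nnn}$.

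There is no real obstacle here; the only point needing care is this final passage from ``almost surely'' to ``surely.'' Alternatively, a direct proof is available and avoids probability entirely. Summing the inequality over $\kS=0,\ldots,\nS$ gives
\begin{equation}
\upbeta_{\nS+1}+\sum_{\kS=0}^{\nS}\upeta_\kS\leq\upbeta_0+\sum_{\kS=0}^{\nS}\uptau_\kS ,
\end{equation}
which bounds the partial sums of $(\upeta_\nS)_{\nnn}$ uniformly in $\nS$. For convergence, set $\upgamma_\nS=\upbeta_\nS-\sum_{\kS<\nS}\uptau_\kS$. The hypothesis gives $\upgamma_{\nS+1}\leq\upgamma_\nS-\upeta_\nS\leq\upgamma_\nS$, so $(\upgamma_\nS)_{\nnn}$ is decreasing, and it is bounded below by $-\sum_{\kS\in\NN}\uptau_\kS$. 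Hence $(\upgamma_\nS)_{\nnn}$ converges, and so does $\upbeta_\nS=\upgamma_\nS+\sum_{\kS<\nS}\uptau_\kS$.
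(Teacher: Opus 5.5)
Your proposal is correct and takes the paper's intended route. The paper gives no separate proof: it states Corollary~\ref{c:3} immediately after Lemma~\ref{l:5} as its deterministic specialization, the same trivial-filtration device it uses explicitly for Corollary~\ref{c:2}. Your handling of the passage from ``almost surely'' to ``surely'' is right. Your alternative direct argument is also valid and self-contained: it telescopes the inequality for the summability of the $\upeta_\nS$ and uses the monotone sequence $\upbeta_\nS-\sum_{\kS<\nS}\uptau_\kS$ for convergence.
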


\begin{corollary}
\label{c:4}
Let $\mathfrak{F}=(\FE_{\nS})_{\nnn}$ be a filtration of $\FE$ and
let $(\xi_{\nS})_{\nnn}\in\elll(\mathfrak{F};\RP)$ be such that
$\sum_{\nnn}\EC{\xi_{\nS+1}}{\FE_{\nS}}<\pinf\;\Pas$ Then
$\sum_{\nnn}\xi_{\nS}<\pinf\;\Pas$
\end{corollary}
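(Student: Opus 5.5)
The statement to prove is Corollary~\ref{c:4}: if $\sum_{\nnn}\EC{\xi_{\nS+1}}{\FE_{\nS}}<\pinf$ $\Pas$, then $\sum_{\nnn}\xi_{\nS}<\pinf$ $\Pas$ The plan is to realize the partial sums of the $\xi_\nS$ as the nonnegative part of an almost supermartingale and invoke Lemma~\ref{l:5}. Concretely, I set
\begin{equation}
(\forall\nnn)\quad \beta_{\nS}=\sum_{\kS=0}^{\nS}\xi_{\kS},\qquad
\tau_{\nS}=\EC{\xi_{\nS+1}}{\FE_{\nS}},\qquad
\eta_{\nS}=0,\qquad \delta_{\nS}=0.
\end{equation}
These choices satisfy the hypotheses of Lemma~\ref{l:5}. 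Since each $\xi_\kS$ is $\FE_\kS$-measurable and the filtration is increasing, $(\beta_\nS)_{\nnn}\in\elll(\mathfrak{F};\RP)$. The sequence $(\tau_\nS)_{\nnn}$ is $\mathfrak{F}$-adapted and $\RP$-valued, since conditional expectations of nonnegative variables can be chosen nonnegative and $[0,\pinf]$-valued versions are allowed. By assumption it is summable $\Pas$, so $(\tau_\nS)_{\nnn}\in\elll^1(\mathfrak{F};\RP)$.

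The key computation is
\begin{equation}
\EC{\beta_{\nS+1}}{\FE_\nS}=\beta_\nS+\EC{\xi_{\nS+1}}{\FE_\nS}=\beta_\nS+\tau_\nS\;\Pas,
\end{equation}
which uses the $\FE_\nS$-measurability of $\beta_\nS$ ("taking out what is known"). This is exactly the almost supermartingale inequality of Lemma~\ref{l:5} with $\eta_\nS=\delta_\nS=0$. Lemma~\ref{l:5} then gives that $(\beta_\nS)_{\nnn}$ converges $\Pas$ to an $\RP$-valued random variable, that is, $\sum_{\nnn}\xi_\nS<\pinf$ $\Pas$

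The only delicate point, and the expected obstacle, is integrability. The $\xi_\nS$ need not be integrable, so the conditional expectations must be understood in the generalized sense for nonnegative variables, and one must confirm that Lemma~\ref{l:5} applies in this setting. The Robbins--Siegmund theorem is indeed valid for nonnegative, not necessarily integrable, variables with generalized conditional expectations, so this should go through.

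If one wants to avoid relying on that, the fallback is a localization argument. Since $\sum_{\kS\leq\nS}\tau_\kS$ is $\FE_\nS$-measurable, the time
\begin{equation}
\rho_{\mathsf{m}}=\inf\Menge{\nS}{\textstyle\sum_{\kS=0}^{\nS}\tau_\kS>\mathsf{m}}
\end{equation}
is a stopping time. The stopped sum then satisfies
\begin{equation}
\EE\sum_{\kS=1}^{\rho_{\mathsf{m}}}\xi_\kS=\EE\sum_{\kS=0}^{\rho_{\mathsf{m}}-1}\tau_\kS\leq\mathsf{m}.
\end{equation}
This follows by monotone convergence: write the left-hand sum as $\sum_{\kS\geq1}1_{\{\rho_{\mathsf{m}}\geq\kS\}}\xi_\kS$, note that $\{\rho_{\mathsf{m}}\geq\kS\}\in\FE_{\kS-1}$, and condition each term on $\FE_{\kS-1}$. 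Hence $\sum_{\kS}\xi_\kS<\pinf$ a.s.\ on $\{\rho_{\mathsf{m}}=\pinf\}$. Finally, $\xi_0$ is finite-valued, and the union over $\mathsf{m}\in\NN$ of the events $\{\rho_{\mathsf{m}}=\pinf\}$ has full probability by hypothesis, which gives the conclusion.
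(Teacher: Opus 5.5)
Your proposal is correct and matches the paper's proof: with the partial sums $\beta_\nS=\sum_{\kS=0}^{\nS}\xi_\kS$ you get $\EC{\beta_{\nS+1}}{\FE_\nS}=\beta_\nS+\EC{\xi_{\nS+1}}{\FE_\nS}$, and Lemma~\ref{l:5} with $\eta_\nS=\delta_\nS=0$ gives almost sure convergence of $(\beta_\nS)_{\nnn}$. Your stopping-time fallback is also sound, and it gives a self-contained argument that avoids any question about integrability, although the paper does not need it.
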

\begin{proof}
Set $(\forall\nnn)$ $\beta_{\nS}=\sum_{\kS=0}^{\nS}\xi_k$. Then
$(\forall\nnn)$ $\EC{\beta_{\nS+1}}{\FE_{\nS}}=
\beta_{\nS}+\EC{\xi_{\nS+1}}{\FE_{\nS}}$. Lemma~\ref{l:5} then
ensures that $(\beta_{\nS})_{\nnn}$ converges $\Pas$, that is,
$\sum_{\nnn}\xi_{\nS}<\pinf\;\Pas$
\end{proof}

The following fact will also be needed.

\begin{lemma}[\protect{\cite[Lemma~2.7]{Moco26}}]
\label{l:6}
Let $\xi\in L^1(\upOmega,\FE,\PP;\RR)$, let $\upPhi$ be a family of
random variables, let $\EuScript{A}$ be the sub-$\upsigma$-algebra 
of $\FE$ generated by $\upPhi$, and let 
$\eta\in L^1(\upOmega,\FE,\PP;\RR)$ be independent of 
the sub-$\upsigma$-algebra generated by $\{\xi\}\cup\upPhi$. Then
$\EC{\eta\xi}{\EuScript{A}}=\EE\eta\EC{\xi}{\EuScript{A}}$.
\end{lemma}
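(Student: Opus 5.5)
The plan is to check directly that the $\EuScript{A}$-measurable random variable $\EE\eta\,\EC{\xi}{\EuScript{A}}$ satisfies the defining property of the conditional expectation of $\eta\xi$ given $\EuScript{A}$. Throughout, write $\EuScript{G}$ for the sub-$\upsigma$-algebra generated by $\{\xi\}\cup\upPhi$. Then $\EuScript{A}\subset\EuScript{G}$, $\xi$ is $\EuScript{G}$-measurable, and $\eta$ is independent of $\EuScript{G}$.

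\emph{Step 1: integrability.} We need $\eta\xi\in L^1(\upOmega,\FE,\PP;\RR)$ for $\EC{\eta\xi}{\EuScript{A}}$ to make sense. Since $\eta$ is independent of $\xi$, the variables $\abs{\eta}$ and $\abs{\xi}$ are independent and nonnegative. The product formula for independent nonnegative random variables (valid in $\RX$) gives $\EE\abs{\eta\xi}=\EE\abs{\eta}\,\EE\abs{\xi}<\pinf$. I expect this to be the only point needing some care: integrability of the product is not automatic, and it is exactly what independence supplies.

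\emph{Step 2: measurability.} $\EE\eta$ is a deterministic real number and $\EC{\xi}{\EuScript{A}}$ is $\EuScript{A}$-measurable and integrable. Hence $\EE\eta\,\EC{\xi}{\EuScript{A}}$ is an $\EuScript{A}$-measurable integrable random variable.

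\emph{Step 3: the averaging identity.} Fix $\mathsf{A}\in\EuScript{A}$. Then $1_{\mathsf{A}}\xi$ is $\EuScript{G}$-measurable and integrable, so it is independent of $\eta$. The product rule for independent integrable variables yields
\begin{equation*}
\EE\brk{1_{\mathsf{A}}\eta\xi}=\EE\eta\,\EE\brk{1_{\mathsf{A}}\xi}
=\EE\eta\,\EE\brk{1_{\mathsf{A}}\EC{\xi}{\EuScript{A}}}
=\EE\brk1{1_{\mathsf{A}}\,\EE\eta\,\EC{\xi}{\EuScript{A}}},
\end{equation*}
where the middle equality is the defining property of $\EC{\xi}{\EuScript{A}}$.

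By the almost sure uniqueness of conditional expectation, this identity for every $\mathsf{A}\in\EuScript{A}$, together with Steps 1 and 2, gives $\EC{\eta\xi}{\EuScript{A}}=\EE\eta\,\EC{\xi}{\EuScript{A}}\;\Pas$
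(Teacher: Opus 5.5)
Your proof is correct and complete. The integrability of $\eta\xi$ comes from independence. The variable $\EE\eta\,\EC{\xi}{\EuScript{A}}$ is $\EuScript{A}$-measurable. The identity $\EE(1_{\mathsf{A}}\eta\xi)=\EE\eta\,\EE(1_{\mathsf{A}}\xi)$ holds for every $\mathsf{A}\in\EuScript{A}$, because $1_{\mathsf{A}}\xi$ is $\EuScript{G}$-measurable and hence independent of $\eta$. Uniqueness of conditional expectation then finishes the argument.

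The paper gives no proof of this lemma; it is imported from \cite[Lemma~2.7]{Moco26}, so there is no in-paper argument to compare yours with. The usual textbook route goes through the larger $\upsigma$-algebra. Since $\eta\xi$ is integrable, $\xi$ is $\EuScript{G}$-measurable, and $\eta$ is independent of $\EuScript{G}$, one has $\EC{\eta\xi}{\EuScript{G}}=\xi\,\EC{\eta}{\EuScript{G}}=\EE\eta\,\xi$. The tower property with $\EuScript{A}\subset\EuScript{G}$ then gives the claim. Your direct check unfolds these two properties by hand. It also makes explicit the one point the pull-out step needs, namely the integrability of the product $\eta\xi$.
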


\subsection{Proof of \protect{Theorem~\ref{t:1}}}

\begin{proof}
We observe that, for every $\nnn$, $\theta_{\nS}$ is
$\FE_{\nS}$-measurable.

\ref{t:1i}: Set 
\begin{equation}
\beta_0=0\quad\text{and}\quad(\forall\nnn)\quad
\begin{cases}
\beta_{\nS+1}=\xi_{\nS+1}\theta_{\nS}\\
\tau_\nS=\brk{1+\delta_{\nS}}\alpha_{\nS}\xi_{\nS}
+\varepsilon_{\nS}\theta_{\nS}.
\end{cases}
\end{equation}
Then we must show that
\begin{equation}
\label{e:100}
\beta_\nS\to 0\;\,\Pas
\end{equation}
We derive from \eqref{e:c1} that
\begin{align}
(\forall\nnn)\quad\EC1{\beta_{\nS+1}}{\FE_{\nS}}
&=\EC1{\xi_{\nS+1}}{\FE_{\nS}}\theta_{\nS}\nonumber\\
&\leq\brk1{1+\delta_{\nS}}\xi_{\nS}\theta_{\nS}
+\varepsilon_{\nS}\theta_{\nS}
\nonumber\\
&=\brk1{1+\delta_{\nS}}\brk1{\beta_{\nS}+\alpha_{\nS}\xi_{\nS}}
+\varepsilon_{\nS}\theta_{\nS}
\nonumber\\
&=\brk1{1+\delta_{\nS}}\beta_{\nS}
+\brk{1+\delta_{\nS}}\alpha_{\nS}\xi_{\nS}
+\varepsilon_{\nS}\theta_{\nS}\nonumber\\
&=\brk1{1+\delta_{\nS}}\beta_{\nS}+\tau_{\nS}\;\;\Pas,
\end{align}
where it follows from the assumptions that
$(\tau_{\nS})_{\nnn}\in\elll^1(\mathfrak{F};\RP)$. Therefore 
Lemma~\ref{l:5} asserts that $(\beta_{\nS})_{\nnn}$ converges
$\Pas$ to a $\RP$-valued random variable. In view of \eqref{e:100},
to complete the proof, it is enough to show that
\begin{equation}
\label{e:09}
\varliminf\beta_{\nS}=0\;\,\Pas
\end{equation}
Suppose, to the contrary, that there exists $\upOmega'\in\FE$ such
that $\PP(\upOmega')>0$ and, for every $\upomega\in\upOmega'$,
$\sum_{\nnn}\alpha_{\nS}(\upomega)\xi_{\nS}(\upomega)<\pinf$,
$\sum_{\nnn}\alpha_{\nS}(\upomega)=\pinf$, and there exist
$\upepsilon\in\RPP$ and $\mathrm{N}\in\NN$ such that, for every
integer $\nS\geq\mathrm{N}$,
$\beta_{\nS+1}(\upomega)\geq\upepsilon$, that is,
$\xi_{\nS+1}(\upomega)\geq\upepsilon/\theta_{\nS}(\upomega)$ and,
therefore, 
\begin{equation}\alpha_{\nS+1}(\upomega)\xi_{\nS+1}(\upomega)\geq
\upepsilon\alpha_{\nS+1}(\upomega)/\theta_{\nS}(\upomega).
\end{equation}
Altogether, upon invoking Lemma~\ref{l:1}, we arrive at the
contradiction
\begin{equation}
(\forall\upomega\in\upOmega')\quad
\pinf>\sum_{\nS\geq\mathrm{N}}\alpha_{\nS+1}(\upomega)
\xi_{\nS+1}(\upomega)\geq\upepsilon\sum_{\nS\geq \mathrm{N}}
\dfrac{\alpha_{\nS+1}(\upomega)}{\theta_{\nS}(\upomega)}=\pinf,
\end{equation}
and hence conclude that \eqref{e:09} holds.

\ref{t:1ii}: Set 
\begin{equation}
\label{e:set1}
(\forall\nnn)\quad
\begin{cases}
\beta_{\nS}=\xi_{\nS}\theta_{\nS}\\
\tau_\nS=\varepsilon_{\nS}\theta_{\nS}
+\EC1{\alpha_{\nS+1}\xi_{\nS+1}}{\FE_{\nS}}.
\end{cases}
\end{equation}
To show that $\beta_\nS\to 0$ $\Pas$,
we first note that \eqref{e:c1} yields
\begin{align}
(\forall\nnn)\quad\EC1{\beta_{\nS+1}}{\FE_{\nS}}
&=\EC1{\theta_{\nS+1}\xi_{\nS+1}}{\FE_{\nS}}\nonumber\\
&=\EC{\xi_{\nS+1}}{\FE_{\nS}}\theta_{\nS}
+\EC1{\alpha_{\nS+1}\xi_{\nS+1}}{\FE_{\nS}}\nonumber\\
&\leq\brk1{1+\delta_{\nS}}\xi_{\nS}\theta_{\nS}
+\varepsilon_{\nS}\theta_{\nS}
+\EC1{\alpha_{\nS+1}\xi_{\nS+1}}{\FE_{\nS}}
\nonumber\\
&=\brk1{1+\delta_{\nS}}\beta_{\nS}
+\varepsilon_{\nS}\theta_{\nS}
+\EC1{\alpha_{\nS+1}\xi_{\nS+1}}{\FE_{\nS}}\nonumber\\
&=\brk1{1+\delta_{\nS}}\beta_{\nS}+\tau_{\nS}\;\;\Pas
\label{e:878}
\end{align}
Since $(\tau_{\nS})_{\nnn}\in\elll^1(\mathfrak{F};\RP)$, 
we deduce from Lemma~\ref{l:5} that 
$(\beta_{\nS})_{\nnn}$ converges $\Pas$ to a $\RP$-valued random
variable. Hence, it remains to show that
\begin{equation}
\label{e:092}
\varliminf\beta_{\nS}=0\;\,\Pas
\end{equation}
We argue by contradiction. First, we invoke Corollary~\ref{c:4}
to get $\sum_{\nnn}\alpha_{\nS}\xi_{\nS}<\pinf\;\Pas$
Next, suppose that there exists $\upOmega'\in\FE$ such
that $\PP(\upOmega')>0$ and, for every $\upomega\in\upOmega'$,
$\sum_{\nnn}\alpha_{\nS}(\upomega)\xi_{\nS}(\upomega)<\pinf$, 
$\sum_{\nnn}\alpha_{\nS}(\upomega)=\pinf$, and
there exist $\upepsilon\in\RPP$ and $\mathrm{N}\in\NN$ such that,
for every integer $\nS\geq\mathrm{N}$,
$\beta_{\nS}(\upomega)\geq\upepsilon$, hence,
$\alpha_{\nS}(\upomega)\xi_{\nS}(\upomega)\geq
\upepsilon\alpha_{\nS}(\upomega)/\theta_{\nS}(\upomega)$.
Appealing to Lemma~\ref{l:4}, we obtain the contradiction
\begin{equation}
(\forall\upomega\in\upOmega')\quad
\pinf>\sum_{\nS\geq\mathrm{N}}\alpha_{\nS}(\upomega)
\xi_{\nS}(\upomega)\geq\upepsilon\sum_{\nS\geq \mathrm{N}}
\dfrac{\alpha_{\nS}(\upomega)}{\theta_{\nS}(\upomega)}=\pinf.
\end{equation}
This confirms that \eqref{e:092} holds.

\ref{t:1iii}: The assumptions imply that
$\sum_{\nnn}\varepsilon_{\nS}\theta_{\nS}<\pinf\;\Pas$
and that
$\sum_{\nnn}\EC{\alpha_{\nS+1}\xi_{\nS+1}}{\FE_\nS}<\pinf\;\Pas$
Hence it follows from \ref{t:1ii} that 
$\xi_{\nS}=\mathrm{o}(1/\theta_\nS)\;\Pas$ 
Now fix $\nnn$. Then the $\FE_{\nS}$-measurability
of $\theta_{\nS}$, the independence of $\delta_{\nS}$
from $(\xi_{\nS},\theta_{\nS})$, and \eqref{e:c1} yield
\begin{equation}
\label{e:301}
\EE\brk1{\theta_{\nS+1}\xi_{\nS+1}}
=\EE\brk1{\theta_{\nS}\xi_{\nS+1}}
+\EE\brk1{\alpha_{\nS+1}\xi_{\nS+1}}
\leq\brk1{1+\EE\delta_{\nS}}\EE\brk1{\theta_{\nS}\xi_{\nS}}
+\EE\brk1{\alpha_{\nS+1}\xi_{\nS+1}}
+\EE\brk1{\theta_{\nS}\varepsilon_{\nS}}.
\end{equation}
The assumptions imply that 
$\EE(\theta_{0}\xi_{0})=\EE(\alpha_{0}\xi_{0})<\pinf$, which
recursively ensures through \eqref{e:301} that, for every $\nnn$,
$\EE(\theta_{\nS}\xi_{\nS})<\pinf$. 
Moreover, the assumptions and Lemma~\ref{l:5} (applied to the
natural filtration) guarantee that
$(\EE(\theta_{\nS}\xi_{\nS}))_{\nnn}$
converges to a positive real number and, hence, that it is bounded.
Now set, for every $\nnn$,
$\chi_{\nS}=\sqrt{\xi_{\nS}\theta_{\nS}}$. Then
\begin{equation}
\chi_{\nS}\to 0\;\;\Pas\quad\text{and}\quad
\sup_{\nnn}\EE\abs{\chi_{\nS}}^2<\pinf.
\end{equation}
We deduce from \cite[Lemma~2.9]{Moco26} that 
$\EE\chi_{\nS}=\EE{\sqrt{\xi_{\nS}\theta_{\nS}}}\to 0$, which
completes the proof.
\end{proof}

\begin{remark}
\label{r:1}
A key point in the above proof is that 
\begin{equation}
\label{e:69}
(\beta_{\nS})_{\nnn}\;\:\text{converges almost surely}. 
\end{equation}
We use the almost supermartingale 
property \eqref{e:c1} to secure \eqref{e:69} as it manifests
itself in many models of interest. However, it is clear that 
the conclusions of Theorem~\ref{t:1} remain valid under the more
general assumption \eqref{e:69}.
\end{remark}

\subsection{Special cases}

A noteworthy special case of Theorem~\ref{t:1} is the following. In
the deterministic setting and in the absence of the sequence
$(\delta_{\nS})_{\nnn}$, it reduces to \cite[Theorem~1]{Boas65}. 

\begin{corollary}
\label{c:1}
Let $\mathfrak{F}=(\FE_{\nS})_{\nnn}$ be a filtration of $\FE$ and
let $\mathsf{p}\in\left]1,\pinf\right[$. 
Further, let $(\xi_{\nS})_{\nnn}\in\elll(\mathfrak{F};\RP)$, 
$(\delta_{\nS})_{\nnn}\in\elll^1(\mathfrak{F};\RP)$, and
$(\varepsilon_{\nS})_{\nnn}\in\elll(\mathfrak{F};\RP)$.
Suppose that the almost supermartingale property
\begin{equation}
(\forall\nnn)\quad\EC{\xi_{\nS+1}}{\FE_{\nS}}
\leq(1+\delta_{\nS})\xi_{\nS}+\varepsilon_{\nS}\;\Pas
\end{equation}
is satisfied and that
$\sum_{\nnn}\nS^{\mathsf{p}}\varepsilon_{\nS}<\pinf\;\Pas$
Then the following hold:
\begin{enumerate}
\item
\label{c:1i}
Suppose that 
$\sum_{\nnn}\nS^{\mathsf{p}-1}\xi_{\nS}<\pinf\;\Pas$
Then $\xi_{\nS+1}=\mathrm{o}(1/\nS^{\mathsf{p}})\;\Pas$
\item
\label{c:1ii}
Suppose that 
$\sum_{\nnn}\brk{\nS+1}^{\mathsf{p}-1}
\EC{\xi_{\nS+1}}{\FE_\nS}<\pinf\;\Pas$
Then $\xi_{\nS}=\mathrm{o}(1/\nS^{\mathsf{p}})\;\Pas$
\item
\label{c:1iii}
Suppose that 
$\sum_{\nnn}\nS^{\mathsf{p}}\EE\varepsilon_{\nS}<\pinf$,
$\sum_{\nnn}\nS^{\mathsf{p}-1}\EE\xi_{\nS}<\pinf$,
$\sum_{\nnn}\EE\delta_{\nS}<\pinf$,
and, for every $\nnn$, $\delta_{\nS}$ is independent of
$\xi_{\nS}$.
Then $\xi_{\nS}=\mathrm{o}(1/\nS^{\mathsf{p}})\;\Pas$ and
$\EE{\sqrt{\xi_{\nS}}}=\mathrm{o}(1/\nS^{\mathsf{p}/2})$.
\end{enumerate}
\end{corollary}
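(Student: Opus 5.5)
The plan is to apply Theorem~\ref{t:1} with the deterministic step sequence $\alpha_0=1$ and $(\forall\nS\geq1)$ $\alpha_\nS=\nS^{\mathsf{p}-1}$. Each $\alpha_\nS$ is a positive constant, hence $\FE_\nS$-measurable, and $\sum_{\nnn}\alpha_\nS=\pinf$ because $\mathsf{p}>1$. The resulting $\theta_\nS=1+\sum_{\kS=1}^{\nS}\kS^{\mathsf{p}-1}$ is deterministic. Comparing the sum with integrals of the increasing function $t\mapsto t^{\mathsf{p}-1}$ gives constants $0<\mathsf{c}\leq\mathsf{C}$ with
\begin{equation}
(\forall\nS\geq1)\quad \mathsf{c}\,\nS^{\mathsf{p}}\leq\theta_\nS\leq\mathsf{C}\,\nS^{\mathsf{p}}.
\end{equation}
For $\nS=0$ we have $\theta_0=1$, which is harmless for summability.

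Next I check the hypotheses of Theorem~\ref{t:1}.
\begin{itemize}
\item The upper bound $\theta_\nS\leq\mathsf{C}\nS^{\mathsf{p}}$ turns $\sum\nS^{\mathsf{p}}\varepsilon_\nS<\pinf$ into $\sum\theta_\nS\varepsilon_\nS<\pinf$ $\Pas$, since only the term $\nS=0$ differs and it is finite.
\item In \ref{c:1i}, $\sum\alpha_\nS\xi_\nS$ differs from $\sum\nS^{\mathsf{p}-1}\xi_\nS$ only in the $\nS=0$ term.
\item In \ref{c:1ii}, $\alpha_{\nS+1}=(\nS+1)^{\mathsf{p}-1}$ is deterministic, so $\EC{\alpha_{\nS+1}\xi_{\nS+1}}{\FE_\nS}=(\nS+1)^{\mathsf{p}-1}\EC{\xi_{\nS+1}}{\FE_\nS}$, which is exactly the assumed summable quantity.
\item In \ref{c:1iii}, the moment conditions of Theorem~\ref{t:1}\ref{t:1iii} follow in the same way. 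Independence of $\delta_\nS$ from $(\xi_\nS,\theta_\nS)$ reduces to independence from $\xi_\nS$, because $\theta_\nS$ is constant.
\end{itemize}

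Then I translate the conclusions back to powers of $\nS$. In \ref{c:1i}, $\xi_{\nS+1}=\mathrm{o}(1/\theta_\nS)$ combined with $\theta_\nS\geq\mathsf{c}\nS^{\mathsf{p}}$ gives $\xi_{\nS+1}=\mathrm{o}(1/\nS^{\mathsf{p}})$. In \ref{c:1ii}, $\xi_\nS=\mathrm{o}(1/\nS^{\mathsf{p}})$ follows in the same way. In \ref{c:1iii}, the $\Pas$ statement follows likewise. For the expectation, Theorem~\ref{t:1}\ref{t:1iii} gives
\begin{equation}
\sqrt{\theta_\nS}\,\EE\sqrt{\xi_\nS}=\EE\sqrt{\xi_\nS\theta_\nS}\to0,
\end{equation}
where pulling $\sqrt{\theta_\nS}$ out of the expectation uses that $\theta_\nS$ is deterministic. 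Since $\sqrt{\theta_\nS}\geq\sqrt{\mathsf{c}}\,\nS^{\mathsf{p}/2}$, this yields $\EE\sqrt{\xi_\nS}=\mathrm{o}(1/\nS^{\mathsf{p}/2})$.

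There is no real obstacle. The only care needed is the two-sided comparison $\theta_\nS\asymp\nS^{\mathsf{p}}$, which is a standard integral test, and the bookkeeping of the $\nS=0$ index, where $\nS^{\mathsf{p}-1}$ is replaced by $1$ in the definition of $\alpha_0$.
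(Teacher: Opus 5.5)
Your approach is the paper's: its proof is the single instruction to apply Theorem~\ref{t:1} with $\alpha_\nS=\nS^{\mathsf{p}-1}$. Your treatment of parts~\ref{c:1i} and~\ref{c:1ii} is correct. This includes the comparison $\theta_\nS\asymp\nS^{\mathsf{p}}$, pulling the deterministic $\alpha_{\nS+1}$ and $\theta_\nS$ out of (conditional) expectations, and reducing the independence requirement to independence from $\xi_\nS$.

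There is one step that fails as written, and it comes from your choice $\alpha_0=1$. In part~\ref{c:1iii}, Theorem~\ref{t:1}\ref{t:1iii} requires $\sum_{\nnn}\EE(\theta_\nS\varepsilon_\nS)<\pinf$ and $\sum_{\nnn}\EE(\alpha_\nS\xi_\nS)<\pinf$. With $\alpha_0=\theta_0=1$, their $\nS=0$ terms are $\EE\varepsilon_0$ and $\EE\xi_0$. The corollary's hypotheses do not control these, because there the $\nS=0$ terms carry the weights $0^{\mathsf{p}}=0^{\mathsf{p}-1}=0$. So $\EE\xi_0=\pinf$ or $\EE\varepsilon_0=\pinf$ is permitted, and the moment conditions do not ``follow in the same way.'' Your remark that $\theta_0=1$ is harmless is right for almost sure summability, since $\xi_0$ and $\varepsilon_0$ are finite-valued. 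It is not right for summability of expectations.

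The repair is routine. Apply Theorem~\ref{t:1} to the shifted data $(\xi_{\nS+1})_{\nnn}$, $(\FE_{\nS+1})_{\nnn}$, $(\delta_{\nS+1})_{\nnn}$, $(\varepsilon_{\nS+1})_{\nnn}$ with $\alpha_\nS=(\nS+1)^{\mathsf{p}-1}$. Then $\theta_\nS=\sum_{\kS=1}^{\nS+1}\kS^{\mathsf{p}-1}\asymp(\nS+1)^{\mathsf{p}}$, every hypothesis of Theorem~\ref{t:1} becomes a tail of the corresponding hypothesis of the corollary, and the conclusions are asymptotic, so nothing is lost.

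Alternatively, keep the paper's literal choice $\alpha_0=0^{\mathsf{p}-1}=0$. Then $\theta_0=0$ and the $\nS=0$ terms vanish exactly as in the corollary. This formally violates $\alpha_0\in\RPP$, but the proof of Theorem~\ref{t:1} only invokes the Abel and Abel--Dini lemmas at indices where $\theta_\nS>0$, so it goes through unchanged.
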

\begin{proof}
Apply Theorem~\ref{t:1} with $(\forall\nnn)$
$\alpha_{\nS}=\nS^{\mathsf{p}-1}$.
\end{proof}

Concerning the deterministic setting, let us record an extension of
Lemma~\ref{l:7}.

\begin{corollary}
\label{c:2}
Let $(\upalpha_{\nS})_{\nnn}$ be a sequence in $\RPP$ such that
$\sum_{\nnn}\upalpha_{\nS}=\pinf$ and define $(\forall\nnn)$ 
$\uptheta_{\nS}=\sum_{\kS=0}^{\nS}\upalpha_{\kS}$. Further, let
$(\upxi_{\nS})_{\nnn}$, $(\updelta_{\nS})_{\nnn}$, and
$(\upvarepsilon_{\nS})_{\nnn}$ be sequences in $\RP$ such that
\begin{equation}
\label{e:c2}
\sum_{\nnn}\updelta_{\nS}<\pinf,\; 
\sum_{\nnn}\upalpha_{\nS}\upxi_{\nS}<\pinf,\;
\sum_{\nnn}\uptheta_{\nS}\upvarepsilon_{\nS}<\pinf,
\;\;\text{and}\;\;
(\forall\nnn)\;\;\upxi_{\nS+1}
\leq(1+\updelta_{\nS})\upxi_{\nS}+\upvarepsilon_{\nS}.
\end{equation}
Then $\upxi_{\nS}=\mathsf{o}\brk{1/\uptheta_{\nS}}.$  
\end{corollary}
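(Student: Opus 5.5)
Corollary~\ref{c:2} is the deterministic specialization of Theorem~\ref{t:1}, and I would obtain it by reading the sequences as degenerate random variables. Take the trivial filtration $(\forall\nnn)$ $\FE_{\nS}=\{\emp,\upOmega\}$ and the constant random variables $\alpha_{\nS}\equiv\upalpha_{\nS}$, $\xi_{\nS}\equiv\upxi_{\nS}$, $\delta_{\nS}\equiv\updelta_{\nS}$, $\varepsilon_{\nS}\equiv\upvarepsilon_{\nS}$. These are trivially adapted and take values in the required sets. Moreover $(\delta_{\nS})_{\nnn}\in\elll^1(\mathfrak{F};\RP)$ because $\sum_{\nnn}\updelta_{\nS}<\pinf$, and $\sum_{\nnn}\alpha_{\nS}=\pinf$ holds surely. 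With $\theta_{\nS}\equiv\uptheta_{\nS}$, the condition $\sum_{\nnn}\theta_{\nS}\varepsilon_{\nS}<\pinf$ holds by \eqref{e:c2}.

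Since conditional expectation with respect to the trivial $\upsigma$-algebra of a constant is the constant itself, the recursion in \eqref{e:c2} yields \eqref{e:c1} directly.

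For the summability hypothesis I would use Theorem~\ref{t:1}\ref{t:1ii}, since it gives the conclusion in the unshifted form $\upxi_{\nS}=\mathrm{o}(1/\uptheta_{\nS})$. Its hypothesis reads
\begin{equation*}
\sum_{\nnn}\EC{\alpha_{\nS+1}\xi_{\nS+1}}{\FE_{\nS}}=\sum_{\nnn}\upalpha_{\nS+1}\upxi_{\nS+1}\leq\sum_{\nnn}\upalpha_{\nS}\upxi_{\nS}<\pinf,
\end{equation*}
so it is satisfied.

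Alternatively, one can use \ref{t:1i}, which gives $\upxi_{\nS+1}\uptheta_{\nS}\to0$, and then shift the index. Since
\begin{equation*}
\upxi_{\nS+1}\uptheta_{\nS+1}=\upxi_{\nS+1}\uptheta_{\nS}+\upalpha_{\nS+1}\upxi_{\nS+1}
\end{equation*}
and the last term tends to $0$ by summability, we again get $\upxi_{\nS}\uptheta_{\nS}\to0$. Going through \ref{t:1ii} avoids this extra step.

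I expect no real obstacle; the only point needing care is verifying that the deterministic data fit the measurability and almost-sure framework, which is immediate. A self-contained alternative would be to rerun the proof of \ref{t:1ii} with Corollary~\ref{c:3} in place of Lemma~\ref{l:5}. Setting $\upbeta_{\nS}=\upxi_{\nS}\uptheta_{\nS}$, one gets
\begin{equation*}
\upbeta_{\nS+1}\leq(1+\updelta_{\nS})\upbeta_{\nS}+\uptheta_{\nS}\upvarepsilon_{\nS}+\upalpha_{\nS+1}\upxi_{\nS+1}.
\end{equation*}
Corollary~\ref{c:3} has no $\updelta$ term, so this route needs the standard product-normalization trick: divide by $\prod_{\kS\leq\nS}(1+\updelta_{\kS})$, which converges. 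This shows that $(\upbeta_{\nS})_{\nnn}$ converges, and the Abel--Dini theorem (Lemma~\ref{l:4}) then forces the limit to be $0$. Because of the normalization step, I would simply cite Theorem~\ref{t:1}.
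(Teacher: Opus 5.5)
Your proposal is correct and matches the paper's proof, which simply applies Theorem~\ref{t:1}\ref{t:1ii} with the trivial filtration $\FE_{\nS}=\{\emp,\upOmega\}$. Your check of the hypotheses, in particular $\sum_{\nnn}\upalpha_{\nS+1}\upxi_{\nS+1}\leq\sum_{\nnn}\upalpha_{\nS}\upxi_{\nS}<\pinf$, supplies exactly the details the paper leaves implicit.
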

\begin{proof}
Apply Theorem~\ref{t:1}\ref{t:1ii} with $(\forall\nnn)$
$\FE_{\nS}=\{\emp,\upOmega\}$.
\end{proof}

\section{Application to stochastic iterative methods}
\label{sec:3}

We apply Theorem~\ref{t:1} to the study of the asymptotic behavior
of various stochastic iterative methods to obtain new convergence
rates. 

\subsection{Notation and background}

Throughout, $\HS$ denotes a separable real Hilbert space with power
set $2^{\HS}$, scalar product $\scal{\cdot}{\cdot}$, associated
norm $\norm{\cdot}$, and associated Borel $\upsigma$-algebra
$\BE_{\HS}$. Given a sub-$\upsigma$-algebra $\EuScript{A}$ of
$\FE$, $L^2(\upOmega,\EuScript{A},\PP;\HS)$ is the space of
equivalence classes of $\Pas$ equal $\HS$-valued random variables
$x\colon(\upOmega,\EuScript{A},\PP)\to(\HS,\BE_{\HS})$ such that
$\EE\|x\|^2<\pinf$. Given a sequence $(x_{\nS})_{\nnn}$ in 
$L^2(\upOmega,\FE,\PP;\HS)$, the filtration $(\FE_{\nS})_{\nnn}$ of
$\FE$ under consideration will always be that obtained by letting,
for every $\nnn$, $\FE_{\nS}$ be the sub-$\upsigma$-algebra
generated by $(x_{0},\ldots,x_{\nS})$.

Let $\TS\colon\HS\to\HS$. The fixed point set of $\TS$ is
$\Fix\TS=\menge{\mathsf{x}\in\HS}{\TS\mathsf{x}=\mathsf{x}}$. 
Further, $\TS$ is nonexpansive if it is $1$-Lipschitzian and firmly
nonexpansive if
\begin{equation}
\label{e:95}
(\forall\xS\in\HS)(\forall\mathsf{y}\in\HS)\quad
\norm{\mathsf{T}\xS-\mathsf{T}\mathsf{y}}^2\leq
\norm{\xS-\mathsf{y}}^2-
\norm{\brk{\Id-\mathsf{T}}\xS-\brk{\Id-\mathsf{T}}\mathsf{y}}^2.
\end{equation}

Let $\AS\colon\HS\to 2^{\HS}$. The graph of $\AS$ is
$\gra\AS=\menge{(\xS,\xS^*)\in\HS\times\HS}{\xS^*\in\AS\xS}$ and
the set of zeros of $\AS$ is
$\zer\AS=\menge{\xS\in\HS}{0\in\AS\xS}$. The inverse of
$\AS$ is the operator $\AS^{-1}\colon\HS\to2^{\HS}$ with graph
$\gra\AS^{-1}=\menge{(\xS^*,\xS)\in\HS\times\HS}{\xS^*\in\AS\xS}$.
The resolvent of $\AS$ is $\mathsf{J}_{\AS}=(\Id+\AS)^{-1}$ and
the Yosida approximation of $\AS$ of index $\upgamma\in\RPP$ is
\begin{equation}
\moyo{\AS}{\upgamma}=
\dfrac{\Id-\mathsf{J}_{\upgamma\AS}}{\upgamma}.
\end{equation}
Suppose that $\AS$ is monotone, that is, 
\begin{equation}
\brk1{\forall(\xS,\xS^*)\in\gra\AS}
\brk1{\forall(\yS,\yS^*)\in\gra\AS}\quad
\scal{\xS-\yS}{\xS^*-\yS^*}\geq 0.
\end{equation}
Then $\AS$ is maximally monotone if there exists no
monotone operator $\mathsf{B}\colon\HS\to 2^{\HS}$ such that 
$\gra\AS\subset\gra\mathsf{B}\neq\gra\AS$. In this case,
$\dom\mathsf{J}_{\AS}=\HS$ and $\mathsf{J}_{\AS}$ is firmly
nonexpansive.

We denote by $\upGamma_0(\HS)$ the class of lower semicontinuous
convex functions $\mathsf{f}\colon\HS\to\RX$ which are not
identically $\pinf$. Let $\mathsf{f}\in\upGamma_0(\HS)$. The set of
minimizers of $\fS$ is $\Argmin\fS$, the subdifferential of
$\mathsf{f}$ is the maximally monotone operator
\begin{equation}
\uppartial\mathsf{f}\colon\HS\to 2^{\HS}\colon
\xS\mapsto\menge{\xS^*\in\HS}{(\forall\yS\in\HS)\;
\scal{\yS-\xS}{\xS^*}+\mathsf{f}(\xS)\leq\mathsf{f}(\yS)}, 
\end{equation}
and the proximity operator of $\mathsf{f}$ is 
\begin{equation}
\prox_{\mathsf{f}}=\mathsf{J}_{\uppartial\mathsf{f}}\colon\HS\to\HS
\colon\xS\mapsto\underset{\yS\in\HS}{\argmin}\brk2{\mathsf{f}(\yS)
+\dfrac{1}{2}\norm{\xS-\yS}^2}. 
\end{equation}
Alternatively, 
\begin{equation}
\label{e:94}
\brk1{\forall(\xS,\mathsf{p})\in\HS\times\HS}\quad
\mathsf{p}=\prox_{\fS}\xS\;\Leftrightarrow\;
\brk{\forall\yS\in\HS}\quad
\scal{\yS-\mathsf{p}}{\xS-\mathsf{p}}+\fS(\mathsf{p})
\leq\fS(\yS).
\end{equation}
See \cite{Livre1} for an account of Hilbertian nonlinear analysis.

\subsection{Inexact proximal point algorithm \`a la
Br\'ezis--Lions}

The proximal point algorithm plays a central role in nonlinear
analysis \cite[Section~5]{Acnu24}. Its goal is to produce a zero of
a maximally monotone operator. We propose a stochastic extension of
the general version of the deterministic proximal point algorithm
proposed in \cite{Brez78}, together with rates of convergence for
the Yosida approximations which are new even in the deterministic
setting. We recall that $(\forall\upgamma\in\RPP)$ 
$(\forall\xS\in\HS)$ $\xS\in\zer\AS$
$\Leftrightarrow$ $\norm{\moyo{\AS}{\upgamma}\xS}=0$
\cite[Proposition~23.38]{Livre1}. 

\begin{theorem}
\label{t:2}
Let $\AS\colon\HS\to2^{\HS}$ be a maximally monotone operator such
that $\zer\AS\neq\emp$, let $(\upgamma_{\nS})_{\nnn}$ be a sequence
in $\RPP$ such that $\sum_{\nnn}\upgamma_{\nS}^2=\pinf$, 
let $(e_{\nS})_{\nnn}$ be a sequence in 
$L^2(\upOmega,\FE,\PP;\HS)$, and let
$x_0\in L^2(\upOmega,\FE,\PP;\HS)$. Iterate
\begin{equation}
\label{e:p25+}
\begin{array}{l}
\textup{for}\;\nS=0,1,\ldots\\
\left\lfloor
\begin{array}{l}
x_{\nS+1}=\mathsf{J}_{\upgamma_{\nS}\AS}^{}x_{\nS}+e_{\nS}.
\end{array}
\right.\\
\end{array}
\end{equation} 
Set $(\forall\nnn)$ 
$\uptheta_{\nS}=\sum_{\kS=0}^{\nS}\upgamma_{\kS}^2$. 
Then the following hold:
 \begin{enumerate}
\item
\label{t:2i}
Suppose that 
$\sum_{\nnn}\sqrt{\EC{\norm{e_{\nS}}^2}{\FE_{\nS}}}<\pinf\;\Pas$
and $\sum_{\nnn}\upgamma_{\nS+1}^{-2}\uptheta_{\nS}
\EC{\norm{e_{\nS}}^2}{\FE_{\nS}}<\pinf\;\Pas$ Then
$\norm{\moyo{\AS}{\upgamma_{\nS+1}}x_{\nS+1}}
=\mathrm{o}(1/\sqrt{\uptheta_{\nS}})\;\Pas$
\item
\label{t:2ii}
Suppose that $\sum_{\nnn}\sqrt{\EE\norm{e_{\nS}}^2}<\pinf$ and
$\sum_{\nnn}\upgamma_{\nS+1}^{-2}\uptheta_{\nS}
\EE\norm{e_{\nS}}^2<\pinf$. Then
$\norm{\moyo{\AS}{\upgamma_{\nS}}x_{\nS}}
=\mathrm{o}(1/\sqrt{\uptheta_{\nS}})\;\Pas$ and
$\EE\norm{\moyo{\AS}{\upgamma_{\nS}}x_{\nS}}
=\mathrm{o}(1/\sqrt{\uptheta_{\nS}})$.
\end{enumerate}
\end{theorem}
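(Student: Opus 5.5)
The plan is to apply Theorem~\ref{t:1} with $\alpha_{\nS}=\upgamma_{\nS}^2$, so that $\theta_{\nS}=\uptheta_{\nS}$ is deterministic. For the unknown sequence I take the squared norm of the Yosida approximation,
\begin{equation*}
\xi_{\nS}=\norm{\moyo{\AS}{\upgamma_{\nS}}x_{\nS}}^2
=\upgamma_{\nS}^{-2}\norm{x_{\nS}-\mathsf{J}_{\upgamma_{\nS}\AS}x_{\nS}}^2 .
\end{equation*}
Then $\alpha_{\nS}\xi_{\nS}=\norm{x_{\nS}-\mathsf{J}_{\upgamma_{\nS}\AS}x_{\nS}}^2$. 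Since the conclusions $\xi_{\nS+1}=\mathrm{o}(1/\uptheta_{\nS})$ and $\xi_{\nS}=\mathrm{o}(1/\uptheta_{\nS})$ are the squares of the asserted rates, \ref{t:2i} should come from Theorem~\ref{t:1}\ref{t:1i}. Part \ref{t:2ii} should come from Theorem~\ref{t:1}\ref{t:1ii}--\ref{t:1iii}, using $\EE\sqrt{\xi_{\nS}\uptheta_{\nS}}\to0$ for the mean rate. This leaves two ingredients: summability of $\sum_{\nS}\alpha_{\nS}\xi_{\nS}$ and a recursion of the form \eqref{e:c1}.

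\emph{Step 1 (summability).} Fix $\mathsf{z}\in\zer\AS$. Firm nonexpansiveness \eqref{e:95} of $\mathsf{J}_{\upgamma_{\nS}\AS}$, together with $\mathsf{J}_{\upgamma_{\nS}\AS}\mathsf{z}=\mathsf{z}$, gives $\norm{\mathsf{J}_{\upgamma_{\nS}\AS}x_{\nS}-\mathsf{z}}^2\leq\norm{x_{\nS}-\mathsf{z}}^2-\norm{x_{\nS}-\mathsf{J}_{\upgamma_{\nS}\AS}x_{\nS}}^2$. Set $s_{\nS}=\sqrt{\EC{\norm{e_{\nS}}^2}{\FE_{\nS}}}$ and expand $\norm{x_{\nS+1}-\mathsf{z}}^2$. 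Bounding the cross term by conditional Cauchy--Schwarz and $2\mathsf{t}\leq1+\mathsf{t}^2$, I expect
\begin{equation*}
\EC{\norm{x_{\nS+1}-\mathsf{z}}^2}{\FE_{\nS}}+\alpha_{\nS}\xi_{\nS}
\leq(1+s_{\nS})\norm{x_{\nS}-\mathsf{z}}^2+s_{\nS}+s_{\nS}^2 .
\end{equation*}
Lemma~\ref{l:5} then yields $\sum_{\nS}\alpha_{\nS}\xi_{\nS}<\pinf$ $\Pas$, and also that $(\norm{x_{\nS}-\mathsf{z}})_{\nnn}$ is bounded $\Pas$

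\emph{Step 2 (recursion).} Set $\mathsf{y}=\mathsf{J}_{\upgamma_{\nS}\AS}x_{\nS}$ and $\mathsf{u}=(x_{\nS}-\mathsf{y})/\upgamma_{\nS}\in\AS\mathsf{y}$. The Brézis--Lions facts are that $\norm{\moyo{\AS}{\upgamma}\mathsf{y}}\leq\norm{\mathsf{u}}$ for every $\upgamma$, and that $\moyo{\AS}{\upgamma}$ is $(1/\upgamma)$-Lipschitzian \cite{Livre1}. Combined, they give
\begin{equation*}
\sqrt{\xi_{\nS+1}}\leq\sqrt{\xi_{\nS}}+\norm{e_{\nS}}/\upgamma_{\nS+1}.
\end{equation*}
Squaring and using $2\mathsf{a}\mathsf{b}\leq s_{\nS}\mathsf{a}^2+\mathsf{b}^2/s_{\nS}$ gives \eqref{e:c1} with $\delta_{\nS}=s_{\nS}$ and $\varepsilon_{\nS}=(s_{\nS}+s_{\nS}^2)/\upgamma_{\nS+1}^2$. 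Here $\delta_{\nS}=s_{\nS}$ is summable by the first hypothesis of \ref{t:2i}.

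\emph{Step 3 (main obstacle).} The difficulty is the condition $\sum_{\nS}\uptheta_{\nS}\varepsilon_{\nS}<\pinf$. The hypothesis controls $\uptheta_{\nS}s_{\nS}^2/\upgamma_{\nS+1}^2$, not $\uptheta_{\nS}s_{\nS}/\upgamma_{\nS+1}^2$. To avoid the linear-in-$s_{\nS}$ term, I would instead use Remark~\ref{r:1} and prove directly that $\beta_{\nS+1}=\uptheta_{\nS}\xi_{\nS+1}$ converges $\Pas$ The idea is to work with $\sqrt{\beta_{\nS}}$, for which the Step~2 inequality is linear:
\begin{equation*}
\sqrt{\uptheta_{\nS}\xi_{\nS+1}}\leq\sqrt{\uptheta_{\nS}\xi_{\nS}}+\sqrt{\uptheta_{\nS}}\,\norm{e_{\nS}}/\upgamma_{\nS+1}.
\end{equation*}
In the expansion of $\EC{\beta_{\nS+1}}{\FE_{\nS}}$ the cross term is then $2\sqrt{\uptheta_{\nS}\xi_{\nS}}\,\sqrt{\uptheta_{\nS}}\,s_{\nS}/\upgamma_{\nS+1}$. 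I would bound it by $\delta_{\nS}\beta_{\nS}$ plus the hypothesized summable quantity $\uptheta_{\nS}s_{\nS}^2/\upgamma_{\nS+1}^2$ divided by $\delta_{\nS}$. If that fails, the fallback is to exploit the boundedness of $\xi_{\nS}$ from Step~1 (via $\norm{x_{\nS}-\mathsf{J}_{\upgamma_{\nS}\AS}x_{\nS}}\leq\norm{x_{\nS}-\mathsf{z}}$). Once $\beta_{\nS}$ converges, the $\varliminf$ argument via Lemma~\ref{l:1} applies verbatim, giving \ref{t:2i}.

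For \ref{t:2ii}, the hypotheses are unconditional versions of those in \ref{t:2i}. The a.s.\ rate would follow from Theorem~\ref{t:1}\ref{t:1ii} (through Corollary~\ref{c:4}). The mean rate would follow by running Steps~1--3 in expectation, yielding $\sup_{\nS}\EE(\uptheta_{\nS}\xi_{\nS})<\pinf$, and then invoking \cite[Lemma~2.9]{Moco26} exactly as in the proof of Theorem~\ref{t:1}\ref{t:1iii}.
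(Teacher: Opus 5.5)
\textbf{There is a genuine gap in Steps~2--3.}

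Your Step~1 is sound. It is a one-shot Robbins--Siegmund argument with $\delta_{\nS}=s_{\nS}$, in place of the paper's two-stage use of \eqref{e:pp8} and \eqref{e:96}. The two Yosida facts in Step~2 are also correct. The problem is that combining them through the triangle inequality loses exactly what is needed, and Step~3 cannot repair this.

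After squaring $\sqrt{\xi_{\nS+1}}\leq\sqrt{\xi_{\nS}}+\norm{e_{\nS}}/\upgamma_{\nS+1}$, the cross term is linear in the error. Any Young splitting $2\sqrt{X}\sqrt{c_{\nS}}\leq\delta_{\nS}X+c_{\nS}/\delta_{\nS}$, with $c_{\nS}=\uptheta_{\nS}s_{\nS}^2/\upgamma_{\nS+1}^2$, needs both $\sum_{\nS}\delta_{\nS}<\pinf$ and $\sum_{\nS}c_{\nS}/\delta_{\nS}<\pinf$. By Cauchy--Schwarz these force $\sum_{\nS}\sqrt{\uptheta_{\nS}}\,s_{\nS}/\upgamma_{\nS+1}<\pinf$, which the hypotheses do not give.

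In fact no manipulation can work, because the relations you derive do not imply the conclusion. Take $\upgamma_{\nS}\equiv1$, so $\uptheta_{\nS}=\nS+1$, and let $N_j=4^j$.
\begin{itemize}
\item[] On each window $N_j-j^2\leq\kS\leq N_j$, set $\sqrt{\xi_{\kS}}=(\kS-N_j+j^2)/(j^2\sqrt{N_j})$.
\item[] For $N_j-j^2\leq\kS<N_j$, set $s_{\kS}=1/(j^2\sqrt{N_j})$.
\item[] Let $\xi_{\kS}$ and $s_{\kS}$ vanish elsewhere.
\end{itemize}
Then $\sqrt{\xi_{\kS+1}}\leq\sqrt{\xi_{\kS}}+s_{\kS}$ holds and $\xi_{\kS}\leq1$. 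Moreover $\sum_{\kS}\xi_{\kS}$, $\sum_{\kS}s_{\kS}$, and $\sum_{\kS}(\kS+1)s_{\kS}^2$ are all finite. Yet $\uptheta_{N_j-1}\xi_{N_j}=1$ for every $j$. This example also has bounded $\xi_{\kS}$, so your boundedness fallback fails as well.

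\textbf{The missing idea} is the paper's use of the monotonicity of $\AS$ on two points of $\gra\AS$. Let $a=\moyo{\AS}{\upgamma_{\nS}}x_{\nS}$ and $b=\moyo{\AS}{\upgamma_{\nS+1}}x_{\nS+1}$, and use the graph points $(\mathsf{J}_{\upgamma_{\nS}\AS}x_{\nS},a)$ and $(\mathsf{J}_{\upgamma_{\nS+1}\AS}x_{\nS+1},b)$. Since $\mathsf{J}_{\upgamma_{\nS}\AS}x_{\nS}-\mathsf{J}_{\upgamma_{\nS+1}\AS}x_{\nS+1}=\upgamma_{\nS+1}b-e_{\nS}$, monotonicity gives $\scal{b-e_{\nS}/\upgamma_{\nS+1}}{a-b}\geq0$, that is,
\begin{equation*}
\norm{b}^2\leq\norm{a}^2-\norm{a-b}^2-\dfrac{2}{\upgamma_{\nS+1}}\scal{e_{\nS}}{a-b}\leq\norm{a}^2+\dfrac{\norm{e_{\nS}}^2}{\upgamma_{\nS+1}^2}.
\end{equation*}
The term $-\norm{a-b}^2$ absorbs the cross term completely; this is the information your triangle-inequality step discards. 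One obtains \eqref{e:c1} with $\delta_{\nS}=0$ and $\varepsilon_{\nS}=\EC{\norm{e_{\nS}}^2}{\FE_{\nS}}/\upgamma_{\nS+1}^2$. Then $\sum_{\nS}\uptheta_{\nS}\varepsilon_{\nS}<\pinf$ is exactly your second hypothesis, and Theorem~\ref{t:1}\ref{t:1i} applies together with your Step~1.

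For \ref{t:2ii}, having $\delta_{\nS}=0$ also makes the independence requirement of Theorem~\ref{t:1}\ref{t:1iii} trivial. A random choice $\delta_{\nS}=s_{\nS}$ would generally violate it. What remains is to obtain $\sum_{\nS}\EE(\upgamma_{\nS}^2\xi_{\nS})<\pinf$ from the unconditional analogue of Step~1, via Cauchy--Schwarz, Minkowski, and Corollary~\ref{c:3}.
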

\begin{proof}
Let $\nnn$. It follows from  \eqref{e:p25+}, the monotonicity
of $\AS$, and \cite[Proposition~23.22]{Livre1} that
\begin{align}
0&\leq\upgamma_{\nS+1}^{-1}
\scal1{\mathsf{J}_{\upgamma_{\nS}\AS}x_{\nS}
-\mathsf{J}_{\upgamma_{\nS+1}\AS}x_{\nS+1}}
{\moyo{\AS}{\upgamma_{\nS}}x_{\nS}
-\moyo{\AS}{\upgamma_{\nS+1}}x_{\nS+1}}\nonumber\\
&=\upgamma_{\nS+1}^{-1}
\scal1{x_{\nS+1}-e_{\nS}
-\mathsf{J}_{\upgamma_{\nS+1}\AS}x_{\nS+1}}
{\moyo{\AS}{\upgamma_{\nS}}x_{\nS}
-\moyo{\AS}{\upgamma_{\nS+1}}x_{\nS+1}}\nonumber\\
&=\scal1{\moyo{\AS}
{\upgamma_{\nS+1}}x_{\nS+1}-\upgamma_{\nS+1}^{-1}e_{\nS}}
{\moyo{\AS}{\upgamma_{\nS}}x_{\nS}
-\moyo{\AS}{\upgamma_{\nS+1}}x_{\nS+1}}\nonumber\\
&=\scal1{\moyo{\AS}
{\upgamma_{\nS+1}}x_{\nS+1}}{\moyo{\AS}{\upgamma_{\nS}}x_{\nS}
-\moyo{\AS}{\upgamma_{\nS+1}}x_{\nS+1}}
-\scal1{\upgamma_{\nS+1}^{-1}e_{\nS}}
{\moyo{\AS}{\upgamma_{\nS}}x_{\nS}
-\moyo{\AS}{\upgamma_{\nS+1}}x_{\nS+1}}\nonumber\\
&=\dfrac{1}{2}\brk2{\norm1{\moyo{\AS}{\upgamma_{\nS}}x_{\nS}}^2
-\norm1{\moyo{\AS}{\upgamma_{\nS+1}}x_{\nS+1}}^2
-\norm1{\moyo{\AS}{\upgamma_{\nS}}x_{\nS}
-\moyo{\AS}{\upgamma_{\nS+1}}x_{\nS+1}}^2}
-\scal1{\upgamma_{\nS+1}^{-1}e_{\nS}}
{\moyo{\AS}{\upgamma_{\nS}}x_{\nS}
-\moyo{\AS}{\upgamma_{\nS+1}}x_{\nS+1}}
\nonumber\\
&\leq\dfrac{1}{2}\brk2{\norm1{\moyo{\AS}{\upgamma_{\nS}}x_{\nS}}^2
-\norm1{\moyo{\AS}{\upgamma_{\nS+1}}x_{\nS+1}}^2
-\norm1{\moyo{\AS}{\upgamma_{\nS}}x_{\nS}
-\moyo{\AS}{\upgamma_{\nS+1}}x_{\nS+1}}^2}
\nonumber\\
&\quad\quad
+\dfrac{1}{2}\brk2{
\norm{e_{\nS}}^2+\norm1{\moyo{\AS}{\upgamma_{\nS}}x_{\nS}
-\moyo{\AS}{\upgamma_{\nS+1}}x_{\nS+1}}^2}\;\;\Pas
\end{align}
We take the conditional expectation with respect to $\FE_{\nS}$ 
to get
\begin{equation}
\label{e:pp1}
\EC2{\norm1{\moyo{\AS}{\upgamma_{\nS+1}}x_{\nS+1}}^2}
{\FE_{\nS}}
\leq\norm{\moyo{\AS}{\upgamma_{\nS}}x_{\nS}}^2
+\dfrac{\EC1{\norm{e_{\nS}}^2}{\FE_{\nS}}}{\upgamma_{\nS+1}^2}\;\;
\Pas
\end{equation}
Now, let $\zS\in\zer\AS$. Then $\mathsf{J}_{\upgamma_{\nS}\AS}$ is
firmly nonexpansive \cite[Corollary~23.9]{Livre1} and
$\zS=\mathsf{J}_{\upgamma_{\nS}\AS}\zS$ 
\cite[Proposition~23.38]{Livre1}. Hence, \eqref{e:p25+},
\eqref{e:95}, and the conditional Jensen inequality yield
\begin{align}
\hspace{-3mm}
&\EC1{\norm{x_{\nS+1}-\zS}^2}{\FE_{\nS}}\nonumber\\
&\quad=\norm{\mathsf{J}_{\upgamma_{\nS}\AS}x_{\nS}-\zS}^2
+2\EC1{\scal{\mathsf{J}_{\upgamma_{\nS}\AS}x_{\nS}-\zS}{e_{\nS}}}
{\FE_{\nS}}
+\EC1{\norm{e_{\nS}}^2}{\FE_{\nS}}\nonumber\\
&\quad\leq\norm{x_{\nS}-\zS}^2
-\norm{x_{\nS}-\mathsf{J}_{\upgamma_{\nS}\AS}x_{\nS}}^2
+2\EC1{\scal{\mathsf{J}_{\upgamma_{\nS}\AS}x_{\nS}-\zS}{e_{\nS}}}
{\FE_{\nS}}+\EC1{\norm{e_{\nS}}^2}{\FE_{\nS}}\nonumber\\
&\quad\leq\norm{x_{\nS}-\zS}^2
-\norm{x_{\nS}-\mathsf{J}_{\upgamma_{\nS}\AS}x_{\nS}}^2
+2\norm{\mathsf{J}_{\upgamma_{\nS}\AS}x_{\nS}-\zS}
\EC{\norm{e_{\nS}}}{\FE_{\nS}}
+\EC1{\norm{e_{\nS}}^2}{\FE_{\nS}}\nonumber\\
&\quad\leq\norm{x_{\nS}-\zS}^2
-\upgamma_{\nS}^2\norm{\moyo{\AS}{\upgamma_{\nS}}x_{\nS}}^2
+2\norm{x_{\nS}-\zS}
\sqrt{\EC{\norm{e_{\nS}}^2}{\FE_{\nS}}}
+\EC1{\norm{e_{\nS}}^2}{\FE_{\nS}}\;\Pas
\label{e:96}
\end{align}
Likewise, by the triangle inequality, 
\begin{align}
\label{e:pp8}
\EC1{\norm{x_{\nS+1}-\zS}}{\FE_{\nS}}
&\leq\norm{\mathsf{J}_{\upgamma_{\nS}\AS}x_{\nS}-\zS}
+\EC1{\norm{e_{\nS}}}{\FE_{\nS}}\nonumber\\
&\leq\norm{x_{\nS}-\zS}+\EC1{\norm{e_{\nS}}}{\FE_{\nS}}
\nonumber\\
&\leq\norm{x_{\nS}-\zS}+\sqrt{\EC1{\norm{e_{\nS}}^2}{\FE_{\nS}}}
\;\Pas
\end{align}
Furthermore, taking the expected value of \eqref{e:96} and
using the Cauchy--Schwarz inequality yields
\begin{align}
\label{e:pp5}
\EE{\norm{x_{\nS+1}-\zS}^2}
+\upgamma_{\nS}^2\EE\norm{\moyo{\AS}{\upgamma_{\nS}}x_{\nS}}^2
&\leq\EE\norm{x_{\nS}-\zS}^2
+2\EE\brk3{\norm{x_{\nS}-\zS}
\sqrt{\EC1{\norm{e_{\nS}}^2}{\FE_{\nS}}}}
+\EE{\norm{e_{\nS}}^2}\nonumber\\
&\leq\EE\norm{x_{\nS}-\zS}^2
+2\sqrt{\EE{\norm{x_{\nS}-\zS}^2}}
\sqrt{\EE{\norm{e_{\nS}}^2}}+\EE{\norm{e_{\nS}}^2},
\end{align}
while using Minkowski's inequality yields
\begin{equation}
\label{e:pp6}
\sqrt{\EE\norm{x_{\nS+1}-\zS}^2}
\leq\sqrt{\EE\norm1{\mathsf{J}_{\upgamma_{\nS}\AS}x_{\nS}-\zS}^2}+
\sqrt{\EE\norm{e_{\nS}}^2}
\leq\sqrt{\EE\norm{x_{\nS}-\zS}^2}+
\sqrt{\EE\norm{e_{\nS}}^2}.
\end{equation}
Finally, let us set
\begin{equation}
\label{e:pp9}
(\forall\nnn)\quad
\begin{cases}
\alpha_{\nS}=\upgamma_{\nS}^2\\
\xi_{\nS}=\norm{\moyo{\AS}{\upgamma_{\nS}}x_{\nS}}^2\\
\delta_{\nS}=0\\
\varepsilon_{\nS}
=\dfrac{\EC1{\norm{e_{\nS}}^2}{\FE_{\nS}}}{\upgamma_{\nS+1}^2}.
\end{cases}
\end{equation}
Then $\sum_{\nnn}\alpha_{\nS}=\pinf\;\Pas$ and, in view of
\eqref{e:pp1}, the almost supermartingale property \eqref{e:c1}
holds.

\ref{t:2i}: 
We infer from \eqref{e:pp9} that 
\begin{equation}
\label{e:90}
\sum_{\nnn}\uptheta_{\nS}\varepsilon_{\nS}
=\sum_{\nnn}\dfrac{\uptheta_{\nS}\EC{\norm{e_{\nS}}^2}
{\FE_{\nS}}}{\upgamma_{\nS+1}^2}<\pinf\;\Pas
\end{equation}
Next, let $\zS\in\zer\AS$. We derive from \eqref{e:pp8} and
Lemma~\ref{l:5} that $(\norm{x_{\nS}-\zS})_{\nnn}$ converges $\Pas$
As a result, $(\norm{x_{\nS}-\zS})_{\nnn}$ is bounded $\Pas$ and
hence
\begin{equation}
\label{e:91}
\sum_{\nnn}\brk2{\norm{x_{\nS}-\zS}
\sqrt{\EC{\norm{e_{\nS}}^2}{\FE_{\nS}}}
+\EC1{\norm{e_{\nS}}^2}{\FE_{\nS}}}<\pinf\;\;\Pas
\end{equation}
It therefore follows from \eqref{e:96} and Lemma~\ref{l:5} applied
to
\begin{equation}
(\forall\nnn)\quad
\begin{cases}
\beta_{\nS}=\norm{x_{\nS}-\zS}^2\\
\delta_{\nS}=0\\
\eta_{\nS}=\upgamma_{\nS}^2
\norm{\moyo{\AS}{\upgamma_{\nS}}x_{\nS}}^2\\
\tau_{\nS}=2\norm{x_{\nS}-\zS}
\sqrt{\EC{\norm{e_{\nS}}^2}{\FE_{\nS}}}
+\EC1{\norm{e_{\nS}}^2}{\FE_{\nS}}
\end{cases}
\end{equation}
that 
\begin{equation}
\label{e:pp2}
\sum_{\nnn}\alpha_{\nS}\xi_{\nS}
=\sum_{\nnn}\upgamma_{\nS}^2
\norm{\moyo{\AS}{\upgamma_{\nS}}x_{\nS}}^2
=\sum_{\nnn}\eta_{\nS}<\pinf\;\;\Pas
\end{equation}
Altogether, we derive from Theorem~\ref{t:1}\ref{t:1i} applied to
\eqref{e:pp9} that $\norm{\moyo{\AS}{\upgamma_{\nS+1}}x_{\nS+1}}
=\mathrm{o}(1/\sqrt{\uptheta_{\nS}})\;\Pas$

\ref{t:2ii}: 
Let $\zS\in\zer\AS$. We derive from \eqref{e:pp6} and
Corollary~\ref{c:3} that $(\EE\norm{x_{\nS}-\zS}^2)_{\nnn}$ is
bounded. Consequently, 
\begin{equation}
\sum_{\nnn}\brk3{\sqrt{\EE{\norm{x_{\nS}-\zS}^2}}
\sqrt{\EE{\norm{e_{\nS}}^2}}+\EE{\norm{e_{\nS}}^2}}<\pinf.
\end{equation}
Using the notation of \eqref{e:pp9}, it then
results from \eqref{e:pp5} and Corollary~\ref{c:3} that
\begin{equation}
\sum_{\nnn}\EE\brk{\alpha_{\nS}\xi_{\nS}}=\sum_{\nnn}
\upgamma_{\nS}^2\EE
\brk1{\norm{\moyo{\AS}{\upgamma_{\nS}}x_{\nS}}^2}<\pinf.
\end{equation}
On the other hand,
$\sum_{\nnn}\EE\brk{\uptheta_{\nS}\varepsilon_{\nS}}=
\sum_{\nnn}\upgamma_{\nS+1}^{-2}\uptheta_{\nS}
\EE\norm{e_{\nS}}^2<\pinf$.
Altogether, Theorem~\ref{t:1}\ref{t:1iii} applied to \eqref{e:pp9}
yields $\norm{\moyo{\AS}{\upgamma_{\nS}}x_{\nS}}
=\mathrm{o}(1/\sqrt{\uptheta_{\nS}})\;\Pas$ and
$\EE\norm{\moyo{\AS}{\upgamma_{\nS}}x_{\nS}}
=\mathrm{o}(1/\sqrt{\uptheta_{\nS}})$.
\end{proof}

\begin{remark}
\label{r:3}
Let $\fS\in\upGamma_0(\HS)$ and let 
$\moyo{\fS}{\upgamma}\colon\xS\mapsto\min_{\yS\in\HS}
\brk{\fS(\yS)+\norm{\xS-\yS}^2/2}$ denote its Moreau envelope of
index $\upgamma\in\RPP$. Then $\AS$ is maximally monotone,
$\mathsf{J}_{\AS}=\prox_{\fS}$,
and $(\forall\xS\in\HS)$ 
$\xS\in\Argmin\fS$ $\Leftrightarrow$ 
$\xS\in\zer\uppartial\fS$ $\Leftrightarrow$ 
$\norm{\nabla\moyo{\fS}{\upgamma}(\xS)}=0$ and
$\moyo{\brk{\uppartial\fS}}{\upgamma}(\xS)=
\nabla\moyo{\fS}{\upgamma}(\xS)$ \cite{Livre1}. 
Thus, if $\AS=\uppartial\fS$, \eqref{e:p25+} becomes
\begin{equation}
\label{e:r25+}
\begin{array}{l}
\textup{for}\;\nS=0,1,\ldots\\
\left\lfloor
\begin{array}{l}
x_{\nS+1}=\prox_{\upgamma_{\nS}\fS}x_{\nS}+e_{\nS},
\end{array}
\right.\\
\end{array}
\end{equation}
and the conclusions of Theorem~\ref{t:2} are:
\begin{enumerate}
\item
$\norm{\nabla\moyo{\fS}{\upgamma_{\nS+1}}(x_{\nS+1})}
=\mathrm{o}(1/\sqrt{\uptheta_{\nS}})\;\Pas$
\item
$\norm{\nabla\moyo{\fS}{\upgamma_{\nS}}(x_{\nS})}
=\mathrm{o}(1/\sqrt{\uptheta_{\nS}})\;\Pas$ and 
$\EE\norm{\nabla\moyo{\fS}{\upgamma_{\nS}}(x_{\nS})}
=\mathrm{o}(1/\sqrt{\uptheta_{\nS}})$.
\end{enumerate}
\end{remark}

\begin{remark}
Suppose that 
$\sum_{\nnn}\sqrt{\EE\norm{e_{\nS}}^2}<\pinf$. Then the sequence
$(x_{\nS})_{\nnn}$ generated by \eqref{e:p25+} converges weakly
$\Pas$ and weakly in $L^2(\upOmega,\FE,\PP;\HS)$ to a
$(\zer\AS)$-valued random variable \cite[Theorem~4.1(iii)]{Sadd25}.
\end{remark}

\subsection{Inexact proximal point algorithm for minimization}

We saw in Remark~\ref{r:3} that the proximal point algorithm
\eqref{e:p25+} can be used to minimize convex functions. However,
as already noted in the deterministic setting of \cite{Brez78},
sharper results can be obtained by exploiting more finely the
specific properties of subdifferentials. Thus, the condition
$\sum_{\nnn}\upgamma_{\nS}^2=\pinf$ was relaxed to
$\sum_{\nnn}\upgamma_{\nS}=\pinf$ in \cite{Brez78}. With this in
mind, we propose a new version of the stochastic proximal point
algorithm in which we introduce the following stochastic relaxation
of \eqref{e:94}.

\begin{definition}
\label{d:2}
Let $\fS\in\upGamma_0(\HS)$, let $\mu$ and $\rho$ be
$\RP$-valued random variables, let $\EuScript{A}$ be a
sub-$\upsigma$-algebra of $\FE$, let 
$x\in L^2(\upOmega,\EuScript{A},\PP;\HS)$, and let
$p\in L^2(\upOmega,\FE,\PP;\HS)$. 
Then
\begin{multline}
p\underset{\mu,\rho}{\overset{\EuScript{A}}{\approx}}
\prox_{\fS}x\;\Leftrightarrow\;
\brk1{\forall y\in L^2(\upOmega,\EuScript{A},\PP;\HS)}\quad
\EC1{\scal{y-p}{x-p}+\fS(p)}{\EuScript{A}}\\
\leq\brk1{1+\EC{\mu}{\EuScript{A}}}\fS(y)
+\EC{\rho}{\EuScript{A}}\;\Pas 
\end{multline}
\end{definition}

\begin{theorem}
\label{t:5}
Let $\fS\in\upGamma_0(\HS)$ be such that $\Argmin\fS\neq\emp$, let
$(\upgamma_{\nS})_{\nnn}$ be a decreasing sequence in $\RPP$ 
such that $\sum_{\nnn}\upgamma_{\nS}=\pinf$, let
$(\mu_{\nS})_{\nnn}$ and $(\rho_{\nS})_{\nnn}$ be sequences of 
$\RP$-valued random variables, and let $x_0\in
L^2(\upOmega,\FE,\PP;\HS)$. Iterate
\begin{equation}
\label{e:30}
\begin{array}{l}
\textup{for}\;\nS=0,1,\ldots\\
\left\lfloor
\begin{array}{l}
x_{\nS+1}\underset{\mu_{\nS},\rho_{\nS}}
{\overset{\FE_{\nS}}{\approx}}
\prox_{\upgamma_{\nS}\fS}x_{\nS}
\end{array}
\right.\\
\end{array}
\end{equation}
and suppose that 
$\sum_{\nnn}\EC{\mu_{\nS}}{\FE_{\nS}}<\pinf\;\Pas$ and 
$\sum_{\nnn}\upgamma_{\nS}^{-1}\EC{\rho_{\nS}}{\FE_{\nS}}
<\pinf\;\Pas$
Then the following hold:
\begin{enumerate}
\item
\label{t:5i}
$(x_{\nS})_{\nnn}$ converges weakly $\Pas$ 
to an $(\Argmin\fS)$-valued random variable.
\item
\label{t:5ii}
Suppose that $\sum_{\nnn}\EE\mu_{\nS}<\pinf$ and 
$\sum_{\nnn}\EE\rho_{\nS}<\pinf$. Then
$(x_{\nS})_{\nnn}$ converges weakly in $L^2(\upOmega,\FE,\PP;\HS)$
to an $(\Argmin\fS)$-valued random variable.
\item
\label{t:5iii}
Set $(\forall\nnn)$ 
$\uptheta_{\nS}=\sum_{\kS=0}^{\nS}\upgamma_{\kS}$. Suppose that 
$\sum_{\nnn}\uptheta_{\nS}\EC1{\mu_{\nS}}{\FE_{\nS}}<\pinf\;\Pas$ 
and that $\sum_{\nnn}\upgamma_{\nS}^{-1}\uptheta_{\nS}
\EC1{\rho_{\nS}}{\FE_{\nS}}<\pinf\;\Pas$
Then $\fS(x_{\nS})-\inf\fS(\HS)
=\mathrm{o}(1/\uptheta_{\nS})\;\Pas$
\end{enumerate}
\end{theorem}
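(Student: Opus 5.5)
The plan is to extract two almost supermartingale inequalities from Definition~\ref{d:2}: one for the distance to a minimizer and one for the function values. Fix $\zS\in\Argmin\fS$ and set $\upmu=\inf\fS(\HS)$. In Definition~\ref{d:2}, with $\EuScript{A}=\FE_{\nS}$, $x=x_{\nS}$, $p=x_{\nS+1}$ and $\fS$ replaced by $\upgamma_{\nS}\fS$, I will make two test choices.

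The first test point is $y=\zS$. Since $2\scal{\zS-x_{\nS+1}}{x_{\nS}-x_{\nS+1}}=\norm{x_{\nS+1}-\zS}^2+\norm{x_{\nS}-x_{\nS+1}}^2-\norm{x_{\nS}-\zS}^2$, this yields
\begin{equation*}
\EC1{\norm{x_{\nS+1}-\zS}^2}{\FE_{\nS}}
+\EC1{\norm{x_{\nS}-x_{\nS+1}}^2}{\FE_{\nS}}
+2\upgamma_{\nS}\EC1{\fS(x_{\nS+1})-\upmu}{\FE_{\nS}}
\leq\norm{x_{\nS}-\zS}^2+2\upgamma_{\nS}\EC{\mu_{\nS}}{\FE_{\nS}}\upmu+2\EC{\rho_{\nS}}{\FE_{\nS}}.
\end{equation*}
If $\upmu<0$, the term $\upgamma_{\nS}\EC{\mu_{\nS}}{\FE_{\nS}}\upmu$ can be dropped. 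Otherwise I will absorb it into $\tau_{\nS}$: because $(\upgamma_{\nS})$ is decreasing it is bounded, so $\sum\upgamma_{\nS}\EC{\mu_{\nS}}{\FE_{\nS}}<\pinf$. Possibly I will first shift $\fS$ by $-\upmu$, which needs care since $\mu$ multiplies $\fS(y)$.

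Lemma~\ref{l:5} then gives three facts almost surely: $\norm{x_{\nS}-\zS}$ converges, $\sum\upgamma_{\nS}(\fS(x_{\nS+1})-\upmu)<\pinf$ (after Corollary~\ref{c:4} to remove the conditional expectation), and $\sum\norm{x_{\nS}-x_{\nS+1}}^2<\pinf$. For \ref{t:5i} I will argue as follows. Using Step 2 below, $\fS(x_{\nS})$ is a.s.\ convergent. Combined with $\sum\upgamma_{\nS}=\pinf$, this forces $\fS(x_{\nS})\to\upmu$. Lower semicontinuity then places every weak cluster point in $\Argmin\fS$, and the standard stochastic Opial argument (convergence of $\norm{x_{\nS}-\zS}$ for all $\zS$ in a countable dense subset of $\Argmin\fS$, using separability) concludes. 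Part \ref{t:5ii} will follow the same way after taking expectations and using Corollary~\ref{c:3}: this gives boundedness in $L^2$ and $\EE\fS(x_{\nS})\to\upmu$, and weak $L^2$ cluster points lie in $\Argmin\fS$ by the same Opial scheme.

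The second test point is $y=x_{\nS}$, which is $\FE_{\nS}$-measurable. This gives
\begin{equation*}
\EC1{\fS(x_{\nS+1})-\upmu}{\FE_{\nS}}\leq\brk1{1+\EC{\mu_{\nS}}{\FE_{\nS}}}\brk1{\fS(x_{\nS})-\upmu}+\EC{\mu_{\nS}}{\FE_{\nS}}\upmu+\upgamma_{\nS}^{-1}\EC{\rho_{\nS}}{\FE_{\nS}}.
\end{equation*}
This is exactly \eqref{e:c1} with $\xi_{\nS}=\fS(x_{\nS})-\upmu$, $\delta_{\nS}=\EC{\mu_{\nS}}{\FE_{\nS}}$ and $\varepsilon_{\nS}=\EC{\mu_{\nS}}{\FE_{\nS}}\abs{\upmu}+\upgamma_{\nS}^{-1}\EC{\rho_{\nS}}{\FE_{\nS}}$. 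For \ref{t:5iii}, take $\alpha_{\nS}=\upgamma_{\nS}$. Then the hypotheses give $\sum\theta_{\nS}\varepsilon_{\nS}<\pinf$ almost surely, and Step 1 gives $\sum\alpha_{\nS}\xi_{\nS+1}<\pinf$. Since $\upgamma$ is decreasing, $\alpha_{\nS+1}\xi_{\nS+1}\leq\alpha_{\nS}\xi_{\nS+1}$, so also $\sum\alpha_{\nS}\xi_{\nS}<\pinf$ (up to the first term). Theorem~\ref{t:1}\ref{t:1i} then yields $\xi_{\nS+1}=\mathrm{o}(1/\theta_{\nS})$. Because $\theta_{\nS+1}/\theta_{\nS}=1+\upgamma_{\nS+1}/\theta_{\nS}\leq1+\upgamma_0/\theta_0$, this converts to $\xi_{\nS}=\mathrm{o}(1/\theta_{\nS})$.

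I expect the main obstacle to be technical: making the test-function step rigorous. The test points must lie in $L^2(\upOmega,\FE_{\nS},\PP;\HS)$, but $x_{\nS}$ need only be in $L^2$ if earlier iterates are, and $\zS$ is constant, so that part is fine. The quantity $\fS(x_{\nS})$ may be $\pinf$ or non-integrable, so I will work with conditional expectations of nonnegative parts and handle the sign of $\upmu$. The Opial step in \ref{t:5ii} should be routine by comparison.
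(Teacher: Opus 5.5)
Your Steps 1 and 2 are essentially the paper's inequalities \eqref{e:77} and \eqref{e:73}/\eqref{e:71}, and your proof of \ref{t:5i} is the paper's. For \ref{t:5iii} your route is slightly longer. You de-condition with Corollary~\ref{c:4}, apply Theorem~\ref{t:1}\ref{t:1i}, and then pass from $\xi_{\nS+1}=\mathrm{o}(1/\uptheta_{\nS})$ to $\xi_{\nS}=\mathrm{o}(1/\uptheta_{\nS})$ via $\uptheta_{\nS+1}\leq 2\uptheta_{\nS}$. The paper instead uses the monotonicity of $(\upgamma_{\nS})_{\nnn}$ inside \eqref{e:77} to get $\sum_{\nnn}\upgamma_{\nS+1}\EC{\xi_{\nS+1}}{\FE_{\nS}}<\pinf$ a.s. That is verbatim the hypothesis of Theorem~\ref{t:1}\ref{t:1ii}, so neither the de-conditioning nor the index conversion is needed. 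Both routes are correct.

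The gap is in \ref{t:5ii}, and it has two parts.

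First, $\EE\fS(x_{\nS})\to\inf\fS(\HS)$ does not follow from your ingredients. Taking expectations in Step 1 and invoking Corollary~\ref{c:3} gives $\sup_{\nnn}\EE\|x_{\nS}-\zS\|^2<\pinf$ and $\sum_{\nnn}\upgamma_{\nS}\EE\xi_{\nS+1}<\pinf$, hence only $\varliminf\EE\xi_{\nS}=0$. Upgrading this to convergence would need an expectation version of Step 2, and that requires two things you do not have:
\begin{itemize}
\item $\sum_{\nnn}\upgamma_{\nS}^{-1}\EE\rho_{\nS}<\pinf$, which is not assumed (only $\sum_{\nnn}\EE\rho_{\nS}<\pinf$ is);
\item control of $\EE(\EC{\mu_{\nS}}{\FE_{\nS}}\xi_{\nS})$, which is unavailable since $\xi_{\nS}$ is unbounded and no independence is assumed.
\end{itemize}

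Second, an Opial argument in $L^2(\upOmega,\FE,\PP;\HS)$ needs convergence of $\EE\|x_{\nS}-y\|^2$ for the weak cluster points $y$ themselves, and these are random $\Argmin\fS$-valued variables. Your Step 1 is derived for a fixed deterministic $\zS$, so its expectation gives this only for deterministic minimizers. That cannot separate two random cluster points $y_1,y_2$: it only yields $\langle\EE y_1-\EE y_2,\zS_1-\zS_2\rangle=0$ for $\zS_1,\zS_2\in\Argmin\fS$. Extending it to random minimizers would need $\FE_m$-measurable test points plus an approximation argument, which you do not mention.

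The missing idea is to use \ref{t:5i} rather than rerun Opial in $L^2$.
\begin{enumerate}
\item Let $x$ be the a.s.\ weak limit from \ref{t:5i}. By Fatou, $\EE\|x\|^2\leq\varliminf\EE\|x_{\nS}\|^2<\pinf$.
\item Suppose $x_{\kS_{\nS}}\rightharpoonup y$ weakly in $L^2(\upOmega,\FE,\PP;\HS)$, and fix $A\in\FE$ and $v\in\HS$. The variables $1_A\langle x_{\kS_{\nS}},v\rangle$ are bounded in $L^2$, hence uniformly integrable, and they converge a.s.\ to $1_A\langle x,v\rangle$.
\item Vitali's theorem then gives $\EE(1_A\langle y,v\rangle)=\EE(1_A\langle x,v\rangle)$. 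Hence $y=x$ a.s., using separability of $\HS$.
\item So the $L^2$-bounded sequence has the single weak cluster point $x$, which is $\Argmin\fS$-valued, and it converges weakly in $L^2$ to $x$.
\end{enumerate}
This is the argument the paper imports from \cite[Theorem~3.2(vi)(d)]{Moco26}. There, \eqref{e:77} is used only for $L^2$-boundedness, which is exactly where $\sum_{\nnn}\EE\mu_{\nS}<\pinf$ and $\sum_{\nnn}\EE\rho_{\nS}<\pinf$ enter.
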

\begin{proof}
We first observe that
\begin{equation}
\label{e:72}
\sum_{\nnn}\EC{\rho_{\nS}}{\FE_{\nS}}\leq\upgamma_0
\sum_{\nnn}\upgamma_{\nS}^{-1}\EC{\rho_{\nS}}{\FE_{\nS}}
<\pinf\;\Pas
\end{equation}
Now let $\nS\in\NN$. Then \eqref{e:30} and Definition~\ref{d:2}
yield
\begin{multline}
\label{e:70}
\brk1{\forall y\in L^2(\upOmega,\FE_{\nS},\PP;\HS)}\quad
\EC{\scal{y-x_{\nS+1}}{x_{\nS}-x_{\nS+1}}+
\upgamma_{\nS}\fS(x_{\nS+1})}{\FE_{\nS}}\\
\leq\upgamma_{\nS}\brk1{1+\EC{\mu_{\nS}}{\FE_{\nS}}}
\fS(y)+\EC{\rho_{\nS}}{\FE_{\nS}}\;\;\Pas
\end{multline}
In particular, for $y=x_{\nS}$, we obtain
\begin{align}
\label{e:73}
\EC1{\fS(x_{\nS+1})}{\FE_{\nS}}
&\leq\EC1{\upgamma_{\nS}^{-1}\norm{x_{\nS}-x_{\nS+1}}^2
+\fS(x_{\nS+1})}{\FE_{\nS}}
\nonumber\\
&\leq\brk1{1+\EC{\mu_{\nS}}{\FE_{\nS}}}
\fS(x_{\nS})+\upgamma_{\nS}^{-1}\EC{\rho_{\nS}}{\FE_{\nS}}\;\;\Pas
\end{align}
Now let $\zS\in\Argmin\fS$ and set
\begin{equation}
\label{e:78}
\xi_{\nS}=\fS(x_{\nS})-\inf\fS(\HS).
\end{equation}
Since
\begin{align}
\norm{x_{\nS+1}-\zS}^2
&=\norm{x_{\nS}-\zS}^2
+2\scal{x_{\nS}-\zS}{x_{\nS+1}-x_{\nS}}
+\norm{x_{\nS+1}-x_{\nS}}^2
\nonumber\\
&=\norm{x_{\nS}-\zS}^2
+2\scal{x_{\nS+1}-\zS}{x_{\nS+1}-x_{\nS}}
-\norm{x_{\nS+1}-x_{\nS}}^2
\nonumber\\
&\leq\norm{x_{\nS}-\zS}^2
+2\scal{\zS-x_{\nS+1}}{x_{\nS}-x_{\nS+1}}\;\;\Pas,
\end{align}
we deduce from \eqref{e:70} that
\begin{align}
\label{e:77}
&\EC1{\norm{x_{\nS+1}-\zS}^2}{\FE_\nS}\nonumber\\
&\quad\leq\norm{x_{\nS}-\zS}^2
+2\EC1{\scal{\zS-x_{\nS+1}}{x_{\nS}-x_{\nS+1}}}{\FE_\nS}
\nonumber\\
&\quad\leq\norm{x_{\nS}-\zS}^2
-2\upgamma_{\nS}\EC1{\xi_{\nS+1}}{\FE_{\nS}}
+2\upgamma_{\nS}\brk1{\inf\fS(\HS)}\EC{\mu_{\nS}}{\FE_{\nS}}
+2\EC{\rho_{\nS}}{\FE_{\nS}}\nonumber\\
&\quad\leq\norm{x_{\nS}-\zS}^2
-2\upgamma_{\nS}\EC1{\xi_{\nS+1}}{\FE_{\nS}}
+2\upgamma_{\nS}\abs{\inf\fS(\HS)}\,\EC{\mu_{\nS}}{\FE_{\nS}}
+2\EC{\rho_{\nS}}{\FE_{\nS}}\nonumber\\
&\quad\leq\norm{x_{\nS}-\zS}^2
-2\upgamma_{\nS+1}\EC1{\xi_{\nS+1}}{\FE_{\nS}}
+2\upgamma_{0}\abs{\inf\fS(\HS)}\,\EC{\mu_{\nS}}{\FE_{\nS}}
+2\EC{\rho_{\nS}}{\FE_{\nS}}\;\;\Pas
\end{align}
and therefore from \eqref{e:72} and Lemma~\ref{l:5} that
\begin{equation}
\label{e:74}
\sum_{\nnn}\upgamma_{\nS+1}\EC1{\xi_{\nS+1}}{\FE_{\nS}}<\pinf\;
\Pas
\end{equation}

\ref{t:5i}: 
Since $\fS$ is lower semicontinuous, we derive from \eqref{e:78}
that $(\upgamma_{\nS}\xi_{\nS})_{\nnn}\in\elll(\mathfrak{F};\RP)$.
Consequently, it follows from \eqref{e:74} and Corollary~\ref{c:4}
that 
\begin{equation}
\label{e:60}
\sum_{\nnn}\upgamma_{\nS}\brk1{\fS(x_{\nS})-\inf\fS(\HS)}
<\pinf\;\Pas
\end{equation}
On the other hand, because $\sum_{\nnn}\upgamma_{\nS}=\pinf$, there
exists a strictly increasing sequence $(\kS_{\nS})_{\nnn}$ in $\NN$
such that 
\begin{equation}
\label{e:75}
\fS(x_{\kS_{\nS}})\to\inf\fS(\HS)\;\Pas
\end{equation}
At the same time, \eqref{e:73} and Lemma~\ref{l:5} imply that 
\begin{equation}
\brk1{\fS(x_{\nS})}_{\nnn}\;\text{converges}\;\Pas
\end{equation}
Altogether, 
\begin{equation}
\label{e:79}
\fS(x_{\nS})\to\inf\fS(\HS)\;\Pas
\end{equation}
Now let $x$ be a \Pas weak sequential cluster point of 
$(x_{\nS})_{\nnn}$, say $x_{\kS_{\nS}}\weakly x\;\Pas$
Then, by weak lower semicontinuity of $\fS$,
$\fS(x)\leq\varliminf\fS(x_{\kS_{\nS}})=\inf\fS(\HS)$. This
confirms that every weak sequential cluster point of
$(x_{\nS})_{\nnn}$ is in $\Argmin\fS\;\Pas$ By invoking
\eqref{e:77}, \eqref{e:72}, and \cite[Proposition~2.3(iv)]{Siop15}
(see also \cite[Proof of Lemma~2.11(iii)]{Moco26}), we conclude
that $x_{\nS}\weakly x\in\Argmin\fS\;\Pas$ 

\ref{t:5ii}: We have
\begin{equation}
\sum_{\nnn}\EE\brk2{\upgamma_{0}\abs{\inf\fS(\HS)}\,
\EC{\mu_{\nS}}{\FE_{\nS}}
+\EC{\rho_{\nS}}{\FE_{\nS}}}
=\upgamma_{0}\abs{\inf\fS(\HS)}\,\sum_{\nnn}
\EE\mu_{\nS}+\sum_{\nnn}\EE\rho_{\nS}<\pinf.
\end{equation}
Hence, the assertion
follows from \eqref{e:77} and \ref{t:5i} by arguing as in the 
proof of \cite[Theorem~3.2(vi)(d)]{Moco26}. 

\ref{t:5iii}: We derive from \eqref{e:73} and \eqref{e:78} that
\begin{align}
\label{e:71}
\EC1{\xi_{\nS+1}}{\FE_{\nS}}
&\leq\brk1{1+\EC{\mu_{\nS}}{\FE_{\nS}}}
\xi_{\nS}+\brk1{\inf\fS(\HS)}\EC{\mu_{\nS}}{\FE_{\nS}}
+\upgamma_{\nS}^{-1}\EC{\rho_{\nS}}{\FE_{\nS}}
\nonumber\\
&\leq\brk1{1+\EC{\mu_{\nS}}{\FE_{\nS}}}
\xi_{\nS}+\abs{\inf\fS(\HS)}\,\EC{\mu_{\nS}}{\FE_{\nS}}
+\upgamma_{\nS}^{-1}\EC{\rho_{\nS}}{\FE_{\nS}}\;\;\Pas
\end{align}
We apply Theorem~\ref{t:1}\ref{t:1ii} with \eqref{e:78} and  
\begin{equation}
(\forall\nnn)\quad
\begin{cases}
\alpha_{\nS}=\upgamma_{\nS}\\
\delta_{\nS}=\EC{\mu_{\nS}}{\FE_{\nS}}\\
\varepsilon_{\nS}=\abs{\inf\fS(\HS)}\,\EC{\mu_{\nS}}{\FE_{\nS}}+
\upgamma_{\nS}^{-1}\EC{\rho_{\nS}}{\FE_{\nS}}
\end{cases}
\end{equation}
to obtain the desired conclusion thanks to \eqref{e:74}. 
\end{proof}

\subsection{Randomly relaxed inexact Krasnosel'ski\u\i--Mann 
iterations}

The Krasnosel'ski\u\i--Mann iterative process aims at constructing
a fixed point of a nonexpansive operator $\TS\colon\HS\to\HS$ and
it is at the core of a number of nonlinear algorithms
\cite{Livre1,Acnu24,Dong22}. In its deterministic version, the
algorithm is governed by the update \cite{Groe72}
\begin{equation}
\label{e:km5}
(\forall\nnn)\quad
\xS_{\nS+1}=\xS_{\nS}+\upmu_{\nS}\brk1{\TS\xS_{\nS}-\xS_{\nS}},
\end{equation}
where $(\upmu_{\nS})_{\nnn}$ is a sequence of relaxation parameters
in $\zeroun$ satisfying 
$\sum_{\nnn}\upmu_{\nS}(1-\upmu_{\nS})=\pinf$. When the
implementation is subject to stochastic approximations,
\eqref{e:km5} becomes
\begin{equation}
\label{e:km51}
(\forall\nnn)\quad
x_{\nS+1}=x_{\nS}+\upmu_{\nS}\brk1{\TS x_{\nS}+e_{\nS}-x_{\nS}},
\end{equation}
and almost sure weak convergence of $(x_{\nS})_{\nnn}$ was
demonstrated in \cite{Siop15} (see also
\cite{Brav24} for the finite-dimensional setting). Extensions of
these convergence results to the case when random relaxations 
$(\mu_{\nS})_{\nnn}$ are employed in \eqref{e:km51} were
established in \cite{Moco26}. Thus far, however, rates on the
displacement sequence $(\norm{\TS x_{\nS}-x_{\nS}})_{\nnn}$ have
not been derived in the stochastic setting. We address this gap in
the following theorem, which features not only stochastic
approximations but also random relaxations.

\begin{theorem}
\label{t:4}
Let $\TS\colon\HS\to\HS$ be a nonexpansive operator such that
$\Fix\TS\neq\emp$ and let 
$x_{0}\in L^2(\upOmega,\FE,\PP;\HS)$. Iterate
\begin{equation}
\label{e:p25}
\begin{array}{l}
\textup{for}\;\nS=0,1,\ldots\\
\left\lfloor
\begin{array}{l}
e_{\nS}\in L^2(\upOmega,\FE,\PP;\HS)\\
\mu_{\nS}\in
L^\infty(\upOmega,\FE,\PP;\left]0,1\right[)\;
\textup{is independent of}\;(x_0,\ldots,x_{\nS},e_{\nS})\\
x_{\nS+1}=x_{\nS}+\mu_{\nS}\brk1{\TS x_{\nS}+e_{\nS}-x_{\nS}}.
\end{array}
\right.\\
\end{array}
\end{equation}
Set $(\forall\nnn)$ 
$\uptheta_{\nS}=\sum_{\kS=0}^{\nS}\EE\brk{\mu_{\kS}(1-\mu_{\kS})}$.
Suppose that 
\begin{equation}
\label{e:km1}
\begin{cases}
\sum_{\nnn}\EE\brk1{\mu_{\nS}(1-\mu_{\nS})}=\pinf\\
\sum_{\nnn}\EE\mu_{\nS}\sqrt{\EC1{\norm{e_{\nS}}^2}
{\FE_{\nS}}}<\pinf\;\Pas\\[2mm]
\sum_{\nnn}\uptheta_{\nS}\EE\brk2{\dfrac{\mu_{\nS}}{1-\mu_{\nS}}}
\EC1{\norm{e_{\nS}}^2}{\FE_{\nS}}<\pinf\;\Pas
\end{cases}
\end{equation}
Then $\norm{\TS x_{\nS+1}-x_{\nS+1}}
=\mathrm{o}(1/\sqrt{\uptheta_{\nS}})\;\Pas$
\end{theorem}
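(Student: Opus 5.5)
Throughout, set $\alpha_{\nS}=\EE\brk{\mu_{\nS}(1-\mu_{\nS})}$ (deterministic, positive, nonsummable), $\xi_{\nS}=\norm{\TS x_{\nS}-x_{\nS}}^2$, and $\sigma_{\nS}=\EC{\norm{e_{\nS}}^2}{\FE_{\nS}}$. The plan is to apply Theorem~\ref{t:1}\ref{t:1i} with these choices and $\theta_{\nS}=\uptheta_{\nS}$.

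This requires three ingredients:
\begin{enumerate}
\item the almost supermartingale inequality \eqref{e:c1} for $\xi_{\nS}$ with $\delta_{\nS}=0$ and an admissible $\varepsilon_{\nS}$;
\item the summability $\sum_{\nS}\uptheta_{\nS}\varepsilon_{\nS}<\pinf$ $\Pas$;
\item $\sum_{\nS}\alpha_{\nS}\xi_{\nS}<\pinf$ $\Pas$.
\end{enumerate}
The conclusion of Theorem~\ref{t:1}\ref{t:1i} is then exactly $\norm{\TS x_{\nS+1}-x_{\nS+1}}=\mathrm{o}(1/\sqrt{\uptheta_{\nS}})$ $\Pas$

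\emph{Step 1: Fejér-type inequality.} Fix $\zS\in\Fix\TS$ and write $x_{\nS+1}-\zS=(1-\mu_{\nS})(x_{\nS}-\zS)+\mu_{\nS}(\TS x_{\nS}-\zS)+\mu_{\nS}e_{\nS}$. The standard identity for convex combinations together with nonexpansiveness gives
\[
\norm{(1-\mu_{\nS})(x_{\nS}-\zS)+\mu_{\nS}(\TS x_{\nS}-\zS)}^2\leq\norm{x_{\nS}-\zS}^2-\mu_{\nS}(1-\mu_{\nS})\xi_{\nS}.
\]
Expand the square including the error term. Then take conditional expectations with respect to $\FE_{\nS}$, using the independence of $\mu_{\nS}$ from $(x_0,\ldots,x_{\nS},e_{\nS})$ via Lemma~\ref{l:6}, the conditional Cauchy--Schwarz inequality, and $\mu_{\nS}^2\leq\mu_{\nS}$. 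This yields
\[
\EC{\norm{x_{\nS+1}-\zS}^2}{\FE_{\nS}}+\alpha_{\nS}\xi_{\nS}\leq\norm{x_{\nS}-\zS}^2+2\EE\mu_{\nS}\norm{x_{\nS}-\zS}\sqrt{\sigma_{\nS}}+\EE\mu_{\nS}\,\sigma_{\nS}.
\]
A first-moment version of the same bound gives $\EC{\norm{x_{\nS+1}-\zS}}{\FE_{\nS}}\leq\norm{x_{\nS}-\zS}+\EE\mu_{\nS}\sqrt{\sigma_{\nS}}$. By Lemma~\ref{l:5} and the second condition in \eqref{e:km1}, $(\norm{x_{\nS}-\zS})_{\nnn}$ is then bounded $\Pas$

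The terms $\EE\mu_{\nS}\sigma_{\nS}$ are summable. Indeed, since $\mu/(1-\mu)\geq\mu$ and $\uptheta_{\nS}\geq\uptheta_0>0$, the third condition in \eqref{e:km1} forces $\sum_{\nS}\EE\mu_{\nS}\,\sigma_{\nS}<\pinf$ $\Pas$ Lemma~\ref{l:5} therefore delivers ingredient (iii), namely $\sum_{\nS}\alpha_{\nS}\xi_{\nS}<\pinf$ $\Pas$

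\emph{Step 2: descent of the residual.} Let $r_{\nS}=x_{\nS}-\TS x_{\nS}$. Then
\[
r_{\nS+1}=(x_{\nS+1}-x_{\nS})-(\TS x_{\nS+1}-\TS x_{\nS})+r_{\nS},
\qquad x_{\nS+1}-x_{\nS}=-\mu_{\nS}r_{\nS}+\mu_{\nS}e_{\nS}.
\]
Writing $u=\TS x_{\nS+1}-\TS x_{\nS}$ with $\norm{u}\leq\norm{x_{\nS+1}-x_{\nS}}$, we get $r_{\nS+1}=(1-\mu_{\nS})r_{\nS}+\mu_{\nS}e_{\nS}-u$. The aim is a bound of the form
\[
\EC{\xi_{\nS+1}}{\FE_{\nS}}\leq\xi_{\nS}+\EE\brk2{\dfrac{\mu_{\nS}}{1-\mu_{\nS}}}c\,\sigma_{\nS}
\]
for a constant $c$, giving ingredient (i) with $\varepsilon_{\nS}=c\,\EE\brk{\mu_{\nS}/(1-\mu_{\nS})}\sigma_{\nS}$; ingredient (ii) is then exactly the third condition in \eqref{e:km1}.

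To obtain it, I would first use $\norm{r_{\nS+1}}\leq\norm{(1-\mu_{\nS})r_{\nS}+\mu_{\nS}e_{\nS}-u}$ and the exact-case monotonicity $\norm{(1-\mu)r-u'}\leq\norm{r}$, where $u'=\TS(x_{\nS}-\mu r_{\nS})-\TS x_{\nS}$. The error then enters as a perturbation of size at most $2\mu_{\nS}\norm{e_{\nS}}$. Next, apply Young's inequality $\norm{a+b}^2\leq(1+\upepsilon)\norm{a}^2+(1+1/\upepsilon)\norm{b}^2$ with the random choice $\upepsilon=\mu_{\nS}(1-\mu_{\nS})$ or similar. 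This produces a factor $\mu_{\nS}/(1-\mu_{\nS})$ on the error, while the $(1+\upepsilon)$ growth is absorbed by the descent $-\mu_{\nS}(1-\mu_{\nS})\xi_{\nS}$-type gain available in the exact step.

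\emph{Main obstacle.} The delicate part is Step 2: producing exactly the weight $\mu_{\nS}/(1-\mu_{\nS})$ with no multiplicative factor $\delta_{\nS}$. The reason is that $\sum\EE\mu_{\nS}(1-\mu_{\nS})=\pinf$, so a factor such as $1+\mu_{\nS}(1-\mu_{\nS})$ would not be summable. I expect the key to be the sharper exact-step identity
\[
\norm{(1-\mu)r-u'}^2\leq\norm{r}^2-\frac{1-\mu}{\mu}\norm{\brk{\mu r}-\brk{\mu r - \ldots}}^2,
\]
coming from the averaged-operator structure of $x\mapsto x-\mu(x-\TS x)$. This would leave room to absorb the Young cross term with weight $(1-\mu_{\nS})/\mu_{\nS}$, whose reciprocal is the factor appearing in \eqref{e:km1}. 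Independence of $\mu_{\nS}$ (Lemma~\ref{l:6}) then converts all $\mu_{\nS}$-dependent coefficients into their expectations.
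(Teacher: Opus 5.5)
Your overall plan (Theorem~\ref{t:1}\ref{t:1i} with $\alpha_{\nS}=\EE\brk{\mu_{\nS}(1-\mu_{\nS})}$, $\xi_{\nS}=\norm{\TS x_{\nS}-x_{\nS}}^2$, $\delta_{\nS}=0$) and your Step~1 are correct and match the paper. Step~1 is even a bit leaner: keeping the exact convex combination intact avoids the paper's extra cross term $2\mu_{\nS}(1-\mu_{\nS})\norm{\TS x_{\nS}-x_{\nS}}\norm{e_{\nS}}$.

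The gap is Step~2. It is the core of the proof, it is only sketched, and the mechanism you propose cannot work. The descent $-\mu_{\nS}(1-\mu_{\nS})\xi_{\nS}$ lives in the Fej\'er recursion for $\norm{x_{\nS}-\zS}^2$, not in the recursion for the residual. Let $\widehat{x}=x_{\nS}-\mu_{\nS}r_{\nS}$ be the exact step and $\widehat{r}=\widehat{x}-\TS\widehat{x}$. What the exact step gives is $\norm{\widehat{r}}^2\leq\norm{r_{\nS}}^2-\frac{1-\mu_{\nS}}{\mu_{\nS}}\norm{\widehat{r}-r_{\nS}}^2$, a gain on the increment only. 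A uniform gain proportional to $\norm{r_{\nS}}^2$ would force linear convergence of exact Krasnosel'ski\u\i--Mann residuals with constant relaxation, which fails in general. So the factor $1+\upepsilon$ from Young's inequality survives as a nonsummable $\delta_{\nS}$.

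Your remedy, left unfinished with ``$\ldots$'', does not close this within your decomposition either. Write $r_{\nS+1}=\widehat{r}+p$ with $\norm{p}\leq2\mu_{\nS}\norm{e_{\nS}}$. The piece $2\scal{\widehat{r}-r_{\nS}}{p}$ of the cross term can be absorbed by the increment gain. The piece $2\scal{r_{\nS}}{p}$ cannot: it is first order in $e_{\nS}$, is not paired with any increment, and need not vanish in conditional mean, since the errors are not assumed unbiased and $p$ depends nonlinearly on $e_{\nS}$. Bounding it by $4\mu_{\nS}\norm{r_{\nS}}\norm{e_{\nS}}$ puts $\EE\mu_{\nS}\norm{r_{\nS}}\sqrt{\sigma_{\nS}}$ into $\varepsilon_{\nS}$. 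Then $\sum_{\nS}\uptheta_{\nS}\varepsilon_{\nS}<\pinf$ would need something like $\sum_{\nS}\uptheta_{\nS}\EE\mu_{\nS}\sqrt{\sigma_{\nS}}<\pinf$, which is not assumed.

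The missing idea is to skip the comparison with the exact step and work along the actual pair $(x_{\nS},x_{\nS+1})$. Set $d_{\nS}=(\TS x_{\nS+1}-x_{\nS+1})-(\TS x_{\nS}-x_{\nS})=(\TS x_{\nS+1}-\TS x_{\nS})-(x_{\nS+1}-x_{\nS})$, so that $\xi_{\nS+1}=\xi_{\nS}+2\scal{\TS x_{\nS}-x_{\nS}}{d_{\nS}}+\norm{d_{\nS}}^2$. Substitute $\TS x_{\nS}-x_{\nS}=\mu_{\nS}^{-1}(x_{\nS+1}-x_{\nS})-e_{\nS}$ from the update. Nonexpansiveness gives
\[
2\scal{x_{\nS+1}-x_{\nS}}{d_{\nS}}=\norm{\TS x_{\nS+1}-\TS x_{\nS}}^2-\norm{x_{\nS+1}-x_{\nS}}^2-\norm{d_{\nS}}^2\leq-\norm{d_{\nS}}^2,
\]
hence, pathwise and after completing the square,
\[
\xi_{\nS+1}\leq\xi_{\nS}-\frac{1-\mu_{\nS}}{\mu_{\nS}}\norm{d_{\nS}}^2-2\scal{e_{\nS}}{d_{\nS}}\leq\xi_{\nS}+\frac{\mu_{\nS}}{1-\mu_{\nS}}\norm{e_{\nS}}^2.
\]
Now the only first-order error term, $\scal{e_{\nS}}{d_{\nS}}$, is paired with the same $d_{\nS}$ that carries the gain. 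That is why neither a multiplicative factor nor a term in $\norm{r_{\nS}}\norm{e_{\nS}}$ appears. Lemma~\ref{l:6} then gives $\EC{\xi_{\nS+1}}{\FE_{\nS}}\leq\xi_{\nS}+\EE\brk{\mu_{\nS}/(1-\mu_{\nS})}\sigma_{\nS}$. This is your ingredient~(i) with $c=1$, and the rest of your argument goes through.
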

\begin{proof}
Let $\zS\in\Fix\TS$. We derive from \eqref{e:p25} and the
nonexpansiveness of $\TS$ that 
\begin{align}
(\forall\nnn)\quad
\norm{x_{\nS+1}-\zS}
&\leq\brk{1-\mu_{\nS}}\norm{x_{\nS}-\zS}+
\mu_{\nS}\norm{\TS x_{\nS}-\zS}+\mu_{\nS}\norm{e_{\nS}}
\nonumber\\
&\leq\norm{x_{\nS}-\zS}+\mu_{\nS}\norm{e_{\nS}}\;\Pas
\end{align}
Hence, by Lemma~\ref{l:6},
\begin{align}
(\forall\nnn)\quad
\EC1{\norm{x_{\nS+1}-\zS}}{\FE_{\nS}}
&\leq\norm{x_{\nS}-\zS}+\EE\mu_{\nS}\EC1{\norm{e_{\nS}}}{\FE_{\nS}}
\nonumber\\
&\leq\norm{x_{\nS}-\zS}+\EE\mu_{\nS}
\sqrt{\EC1{\norm{e_{\nS}}^2}{\FE_{\nS}}}\;\;\Pas
\end{align}
It then follows from Lemma~\ref{l:5} that 
$(\norm{x_{\nS}-\zS})_{\nnn}$
converges $\Pas$ and hence that it is bounded $\Pas$ 
On the other hand, we deduce from \eqref{e:p25} and
\cite[Corollary~2.15]{Livre1} that
\begin{align}
\hspace{-5mm}
(\forall\nnn)\quad
\norm{x_{\nS+1}-\zS}^2
&=\brk{1-\mu_{\nS}}\norm{x_{\nS}-\zS}^2+
\mu_{\nS}\norm{\TS x_{\nS}-\zS+e_{\nS}}^2
-\mu_{\nS}(1-\mu_{\nS})\norm{\TS x_{\nS}-x_{\nS}+e_{\nS}}^2
\nonumber\\
&=\brk{1-\mu_{\nS}}\norm{x_{\nS}-\zS}^2+\mu_{\nS}\brk1{
\norm{\TS x_{\nS}-\zS}^2
+2\scal{\TS x_{\nS}-\zS}{e_{\nS}}
+\norm{e_{\nS}}^2}\nonumber\\
&\quad-\mu_{\nS}(1-\mu_{\nS})\brk1{
\norm{\TS x_{\nS}-x_{\nS}}^2
+2\scal{\TS x_{\nS}-x_{\nS}}{e_{\nS}}
+\norm{e_{\nS}}^2}
\nonumber\\
&\leq\norm{x_{\nS}-\zS}^2-
\mu_{\nS}(1-\mu_{\nS})\norm{\TS x_{\nS}-x_{\nS}}^2
+\mu_{\nS}\brk1{
2\norm{\TS x_{\nS}-\zS}\norm{e_{\nS}}
+\norm{e_{\nS}}^2}\nonumber\\
&\quad+2\mu_{\nS}(1-\mu_{\nS})
\norm{\TS x_{\nS}-x_{\nS}}\norm{e_{\nS}}
-\mu_{\nS}(1-\mu_{\nS})\norm{e_{\nS}}^2
\nonumber\\
&=\norm{x_{\nS}-\zS}^2-
\mu_{\nS}(1-\mu_{\nS})\norm{\TS x_{\nS}-x_{\nS}}^2
+2\mu_{\nS}\norm{\TS x_{\nS}-\zS}\norm{e_{\nS}}
+\mu_{\nS}^2\norm{e_{\nS}}^2\nonumber\\
&\quad+2\mu_{\nS}(1-\mu_{\nS})
\norm{\TS x_{\nS}-x_{\nS}}\norm{e_{\nS}}\;\Pas
\end{align}
Hence, Lemma~\ref{l:6} and the conditional Jensen inequality yield
\begin{align}
\label{e:km6}
&\hspace{-7mm}
(\forall\nnn)\quad\EC1{\norm{x_{\nS+1}-\zS}^2}{\FE_{\nS}}
\nonumber\\
&\leq\norm{x_{\nS}-\zS}^2
-\EE\brk1{\mu_{\nS}(1-\mu_{\nS})}\norm{\TS x_{\nS}-x_{\nS}}^2
+2\EE\brk{\mu_{\nS}}\norm{\TS x_{\nS}-\zS}
\EC1{\norm{e_{\nS}}}{\FE_{\nS}}
\nonumber\\
&\quad
+\EE{\mu_{\nS}^2}\EC1{\norm{e_{\nS}}^2}{\FE_{\nS}}
+2\EE\brk1{\mu_{\nS}(1-\mu_{\nS})}\norm{\TS x_{\nS}-x_{\nS}}
\EC1{\norm{e_{\nS}}}{\FE_{\nS}}
\nonumber\\
&\leq\norm{x_{\nS}-\zS}^2
-\EE\brk1{\mu_{\nS}(1-\mu_{\nS})}\norm{\TS x_{\nS}-x_{\nS}}^2
+2\norm{\TS x_{\nS}-\zS}\EE\mu_{\nS}
\sqrt{\EC1{\norm{e_{\nS}}^2}{\FE_{\nS}}}
\nonumber\\
&\quad
+\EE{\mu_{\nS}^2}\EC1{\norm{e_{\nS}}^2}{\FE_{\nS}}
+2\norm{\TS x_{\nS}-x_{\nS}}\EE\brk1{\mu_{\nS}(1-\mu_{\nS})}
\sqrt{\EC1{\norm{e_{\nS}}^2}{\FE_{\nS}}}
\;\;\Pas
\end{align}
The boundedness of $(\norm{x_{\nS}-\zS})_{\nnn}$ and the
nonexpansiveness of $\TS$ guarantee that 
$(\norm{\TS x_{\nS}-\zS})_{\nnn}$ and 
$(\norm{\TS x_{\nS}-x_{\nS}})_{\nnn}$ are
bounded $\Pas$ Note that, for $\nS$ large enough,
$\uptheta_{\nS}\geq 1$ and 
$\mu_{\nS}^2\leq\mu_{\nS}\leq\mu_{\nS}/(1-\mu_{\nS})$.
Thus, we deduce from \eqref{e:km1} that
\begin{multline}
\sum_{\nnn}\Biggl(
2\norm{\TS x_{\nS}-\zS}\EE\mu_{\nS}
\sqrt{\EC1{\norm{e_{\nS}}^2}{\FE_{\nS}}}
+\EE\mu_{\nS}^2\EC1{\norm{e_{\nS}}^2}{\FE_{\nS}}\\
+2\norm{\TS x_{\nS}-x_{\nS}}\EE\brk1{\mu_{\nS}(1-\mu_{\nS})}
\sqrt{\EC1{\norm{e_{\nS}}^2}{\FE_{\nS}}}\Biggr)
<\pinf\;\;\Pas
\end{multline}
Therefore, we apply Lemma~\ref{l:5} to \eqref{e:km6} to obtain 
\begin{equation}
\label{e:km7}
\sum_{\nnn}\EE\brk1{\mu_{\nS}(1-\mu_{\nS})}
\norm{\TS x_{\nS}-x_{\nS}}^2<\pinf\;\;\Pas
\end{equation}
Now, for every $\nnn$, set $d_{\nS}=\TS x_{\nS+1}-\TS
x_{\nS}-\brk{x_{\nS+1}-x_{\nS}}$ and note that, by nonexpansiveness
of $\TS$ and Lemma~\ref{l:6}, 
\begin{align}
&\EC1{\norm{\TS x_{\nS+1}-x_{\nS+1}}^2}{\FE_{\nS}}
\nonumber\\
&\quad
=\norm{\TS x_{\nS}-x_{\nS}}^2
+2\EC1{\scal{\TS x_{\nS}-x_{\nS}}{d_{\nS}}}{\FE_{\nS}}
+\EC1{\norm{d_{\nS}}^2}{\FE_{\nS}}
\nonumber\\
&\quad
=\norm{\TS x_{\nS}-x_{\nS}}^2
+\EC2{\dfrac{2}{\mu_{\nS}}
\scal{x_{\nS+1}-x_{\nS}-\mu_{\nS}e_{\nS}}{d_{\nS}}}{\FE_{\nS}}
+\EC1{\norm{d_{\nS}}^2}{\FE_{\nS}}
\nonumber\\
&\quad
=\norm{\TS x_{\nS}-x_{\nS}}^2+\EC1{\norm{d_{\nS}}^2}{\FE_{\nS}}
\nonumber\\
&\qquad
+\EC2{\dfrac{1}{\mu_{\nS}}
\brk1{\norm{\TS x_{\nS+1}-\TS x_{\nS}}^2
-\norm{x_{\nS+1}-x_{\nS}}^2-\norm{d_{\nS}}^2}
-2\scal{e_{\nS}}{d_{\nS}}}{\FE_{\nS}}
\nonumber\\
&\quad
\leq\norm{\TS x_{\nS}-x_{\nS}}^2+\EC1{\norm{d_{\nS}}^2}{\FE_{\nS}}
-\EC2{\dfrac{1}{\mu_{\nS}}{\norm{d_{\nS}}^2}
+2\scal{e_{\nS}}{d_{\nS}}}{\FE_{\nS}}
\nonumber\\
&\quad
=\norm{\TS x_{\nS}-x_{\nS}}^2-\EC2{\dfrac{1-\mu_{\nS}}{\mu_{\nS}}
\norm{d_{\nS}}^2}{\FE_{\nS}}
-2\EC1{\scal{e_{\nS}}{d_{\nS}}}{\FE_{\nS}}
\nonumber\\
&\quad
=\norm{\TS x_{\nS}-x_{\nS}}^2-\EC3{\dfrac{1-\mu_{\nS}}{\mu_{\nS}}
\norm2{d_{\nS}+\dfrac{\mu_{\nS}}{1-\mu_{\nS}}e_{\nS}}^2}{\FE_{\nS}}
+\EC2{\dfrac{\mu_{\nS}}{1-\mu_{\nS}}\norm{e_{\nS}}^2}{\FE_{\nS}}
\nonumber\\
&\quad
\leq
\norm{\TS x_{\nS}-x_{\nS}}^2
+\EC2{\dfrac{\mu_{\nS}}{1-\mu_{\nS}}\norm{e_{\nS}}^2}{\FE_{\nS}}
\nonumber\\
&\quad=\norm{\TS x_{\nS}-x_{\nS}}^2
+\EE\brk2{\dfrac{\mu_{\nS}}{1-\mu_{\nS}}}
\EC1{\norm{e_{\nS}}^2}{\FE_{\nS}}\;\;\Pas
\label{e:km8}
\end{align}
In view of \eqref{e:km1}, \eqref{e:km7}, and \eqref{e:km8}, upon
applying Theorem~\ref{t:1}\ref{t:1i} with
\begin{equation}
(\forall\nnn)\quad
\begin{cases}
\alpha_{\nS}=\EE\brk1{\mu_{\nS}(1-\mu_{\nS})}\\
\xi_{\nS}=\norm{\TS x_{\nS}-x_{\nS}}^2\\
\delta_{\nS}=0\\
\varepsilon_{\nS}=\EE\brk3{\dfrac{\mu_{\nS}}{1-\mu_{\nS}}}
\EC1{\norm{e_{\nS}}^2}{\FE_{\nS}},
\end{cases}
\end{equation}
we conclude that $\norm{\TS x_{\nS+1}-x_{\nS+1}}
=\mathrm{o}(1/\sqrt{\uptheta_{\nS}})\;\Pas$ 
\end{proof}

\subsection{Stochastic gradient method}
\label{sec:31}

Given a family $(\fS_{\kS})_{\kS\in\mathsf{K}}$ of convex smooth
functions on $\HS$ and a measurable map
$k\colon\upOmega\to\mathsf{K}$, we consider the problem of
minimizing $\fS\colon\xS\mapsto
\int_{\upOmega}\fS_{k(\upomega)}(\xS)\PP(d\upomega)$.
The principle of the stochastic gradient method is to seek a
minimizer by performing at each iteration a gradient step with
respect to a randomly selected function $\fS_{\mathsf{k}}$.
While rates of convergence for the objective function gap 
$(\fS(x_{\nS})-\inf\fS(\HS))_{\nnn}$ have been extensively studied,
existing analyses typically rely on stronger notions of
convexity \cite{Liu24}, establish rates only for ergodic or
averaged sequences \cite{Bach14,Nemi09}, or only bound the expected
function value gap \cite{Atti26}, often for finite families. In the
following theorem, we establish almost sure rates of convergence
for the objective function gap with an arbitrary family of
functions, general convexity assumptions, and a general variance
control strategy.

\begin{theorem}
\label{t:sgd}
Let $(\mathsf{K},\EuScript{K})$ be a
measurable space and let $\upbeta\in\RPP$. For every
$\kS\in\mathsf{K}$, let $\fS_{\kS}\colon\HS\to\RR$ be a convex 
differentiable function such that $\nabla\fS_{\kS}$ is
$\upbeta$-Lipschitzian. Further, let
$k\colon(\upOmega,\FE,\PP)\to(\mathsf{K},\EuScript{K})$ be a random
variable and set 
\begin{equation}
\label{e:i}
(\forall\xS\in\HS)\quad\fS(\xS)
=\Int_{\upOmega}\fS_{k(\upomega)}(\xS)\PP(d\upomega).
\end{equation}
Assume that $\Argmin\fS\neq\emp$ and that there exists an
increasing function $\uppsi\colon\RP\to\RP$ such that
\begin{equation}
\label{e:99}
(\forall\xS\in\HS)\quad
\int_{\upOmega}\norm1{\nabla\fS_{k(\upomega)}(\xS)}^2
\PP(d\upomega)
\leq\uppsi\brk1{\norm{\xS}}.
\end{equation}
Let $x_{0}\in L^2(\upOmega,\FE,\PP;\HS)$, let
$\mathsf{p}\in\left]2/3,1\right]$, and set $(\forall\nnn)$
$\upgamma_{\nS}=(\nS+1)^{-\mathsf{p}}$. Iterate
\begin{equation}
\label{e:sgd1}
\begin{array}{l}
\textup{for}\;\nS=0,1,\ldots\\
\left\lfloor
\begin{array}{l}
k_{\nS}\;\textup{is a copy of}\;k\;\textup{and it is independent
of}\;\FE_{\nS}\\
x_{\nS+1}=x_{\nS}-\upgamma_{\nS}\nabla\fS_{k_{\nS}}(x_{\nS}).
\end{array}
\right.\\
\end{array}
\end{equation}
Then, $\Pas$,
\begin{equation}
\fS(x_{\nS})-\inf\fS(\HS)
=
\begin{cases}
\mathrm{o}\brk2{\dfrac{1}{\mathsf{n}^{1-\mathsf{p}}}},&\text{if}\;
\mathsf{p}\in\left]2/3,1\right[;\\[3mm]
\mathrm{o}\brk2{\dfrac{1}{\ln(\mathsf{n})}},
&\text{if}\;\mathsf{p}=1.
\end{cases}
\end{equation}
\end{theorem}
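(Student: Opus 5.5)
The plan is to apply Theorem~\ref{t:1}\ref{t:1ii} with $\xi_\nS=\fS(x_\nS)-\inf\fS(\HS)$, $\alpha_\nS=\upgamma_\nS$, and $\uptheta_\nS=\sum_{\kS=0}^{\nS}\upgamma_\kS$. Since $\uptheta_\nS\asymp\nS^{1-\mathsf{p}}$ when $\mathsf{p}<1$ and $\uptheta_\nS\asymp\ln\nS$ when $\mathsf{p}=1$, the conclusion $\xi_\nS=\mathrm{o}(1/\uptheta_\nS)$ gives exactly the stated rates. The proof therefore reduces to three tasks: a descent inequality, a Fejér-type inequality, and checking the summability conditions.

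\textbf{Descent inequality.} By \eqref{e:i}, $\fS$ is convex and differentiable, with $\nabla\fS(\xS)=\EE\nabla\fS_{k}(\xS)$. This uses dominated convergence together with \eqref{e:99}. Moreover $\nabla\fS$ is $\upbeta$-Lipschitzian. The descent lemma, combined with independence of $k_\nS$ from $\FE_\nS$ (a disintegration/freezing argument in the spirit of Lemma~\ref{l:6}), yields
\begin{equation*}
\EC{\fS(x_{\nS+1})}{\FE_\nS}\leq\fS(x_\nS)-\upgamma_\nS\norm{\nabla\fS(x_\nS)}^2+\tfrac{\upbeta}{2}\upgamma_\nS^2\,\uppsi\brk1{\norm{x_\nS}}\;\Pas
\end{equation*}
Dropping the negative term gives \eqref{e:c1} with $\delta_\nS=0$ and $\varepsilon_\nS=\tfrac{\upbeta}{2}\upgamma_\nS^2\uppsi(\norm{x_\nS})$.

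\textbf{Fejér inequality.} For $\zS\in\Argmin\fS$, expanding the square and using convexity, $\scal{x_\nS-\zS}{\nabla\fS(x_\nS)}\geq\xi_\nS$, gives
\begin{equation*}
\EC{\norm{x_{\nS+1}-\zS}^2}{\FE_\nS}\leq\norm{x_\nS-\zS}^2-2\upgamma_\nS\xi_\nS+\upgamma_\nS^2\uppsi\brk1{\norm{x_\nS}}\;\Pas
\end{equation*}
The delicate point is that $\uppsi(\norm{x_\nS})$ is not a priori bounded. I would handle this by a truncation/stopping argument. Fix $\mathsf{M}>0$ and let $\uptau_\mathsf{M}$ be the first time $\norm{x_\nS-\zS}>\mathsf{M}$. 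On $\{\nS<\uptau_\mathsf{M}\}$ the noise term is at most $\upgamma_\nS^2\uppsi(\mathsf{M}+\norm{\zS})$, which is summable because $2\mathsf{p}>1$. Applying Lemma~\ref{l:5} to the stopped process shows that $(\norm{x_{\nS\wedge\uptau_\mathsf{M}}-\zS})$ converges. A second-moment bound (taking expectations in the stopped inequality) gives $\PP(\uptau_\mathsf{M}<\pinf)\leq(\EE\norm{x_0-\zS}^2+\mathrm{C}_\mathsf{M})/\mathsf{M}^2$. Here I would need $\mathrm{C}_\mathsf{M}=\uppsi(\mathsf{M}+\norm{\zS})\sum\upgamma_\nS^2$ to be $\mathrm{o}(\mathsf{M}^2)$. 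If that is not available, I would instead argue pathwise on the events $\{\sup\norm{x_\nS}\leq\mathsf{M}\}$ and show their union has full probability via the stopped supermartingale. I expect this step, obtaining almost sure boundedness without growth control on $\uppsi$, to be the main obstacle.

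\textbf{Summability and conclusion.} Once $\sup_\nS\norm{x_\nS}<\pinf$ \Pas, Lemma~\ref{l:5} applied to the Fejér inequality gives $\sum\upgamma_\nS\xi_\nS<\pinf$ \Pas. Since $\upgamma_{\nS+1}\leq\upgamma_\nS$, the conditional descent inequality then gives $\sum\EC{\upgamma_{\nS+1}\xi_{\nS+1}}{\FE_\nS}\leq\sum\upgamma_\nS(\xi_\nS+\varepsilon_\nS)<\pinf$. Finally, $\uptheta_\nS\varepsilon_\nS\lesssim\uptheta_\nS\upgamma_\nS^2$, which behaves like $\nS^{1-3\mathsf{p}}$ (or $\nS^{-2}\ln\nS$ when $\mathsf{p}=1$). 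This is summable exactly because $\mathsf{p}>2/3$, and it explains the restriction in the statement. Theorem~\ref{t:1}\ref{t:1ii} then yields the claim.
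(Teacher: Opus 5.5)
Your overall architecture matches the paper's: the same $\xi_{\nS}$, $\alpha_{\nS}=\upgamma_{\nS}$ and $\delta_{\nS}=0$, the descent lemma for \eqref{e:c1}, and the count $\nS^{1-3\mathsf{p}}$ explaining $\mathsf{p}>2/3$. There is, however, a genuine gap exactly where you suspected it. You never establish that $(x_{\nS})_{\nnn}$ is bounded $\Pas$, and both of your summability conditions depend on it: $\sum\upgamma_{\nS}\xi_{\nS}<\pinf$ via Lemma~\ref{l:5} with noise $\upgamma_{\nS}^2\uppsi(\|x_{\nS}\|)$, and $\sum\uptheta_{\nS}\varepsilon_{\nS}<\pinf$.

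The stopping-time route does not close this gap. Your Doob bound needs $\uppsi(\mathsf{M})=\mathrm{o}(\mathsf{M}^2)$. That is not assumed, and it fails even for the quadratic growth the hypotheses naturally produce, since then $\mathrm{C}_{\mathsf{M}}/\mathsf{M}^2$ tends to a positive constant. The fallback is circular: $\bigcup_{\mathsf{M}}\{\uptau_{\mathsf{M}}=\pinf\}$ is precisely the event $\{\sup_{\nS}\|x_{\nS}-\zS\|<\pinf\}$ whose full probability must be proved, and the stopped process says nothing about paths after they escape.

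The missing idea is to use the uniform $\upbeta$-Lipschitz continuity of each $\nabla\fS_{\kS}$, which your Fejér step ignores. It gives $\|\nabla\fS_{k_{\nS}}(x_{\nS})\|^2\leq 2\upbeta^2\|x_{\nS}-\zS\|^2+2\|\nabla\fS_{k_{\nS}}(\zS)\|^2$. By independence of $k_{\nS}$ from $\FE_{\nS}$, the conditional expectation of the last term is at most $2\uppsi(\|\zS\|)$. This is a constant, because \eqref{e:99} is only invoked at the fixed point $\zS$. Combined with your convexity bound this yields
\begin{equation*}
\EE\bigl(\|x_{\nS+1}-\zS\|^2\,\big|\,\FE_{\nS}\bigr)\leq\bigl(1+2\upbeta^2\upgamma_{\nS}^2\bigr)\|x_{\nS}-\zS\|^2-2\upgamma_{\nS}\xi_{\nS}+2\upgamma_{\nS}^2\uppsi\bigl(\|\zS\|\bigr)\;\;\Pas
\end{equation*}
The state-dependent part of the noise is now absorbed into the multiplicative factor $1+\delta_{\nS}$, with $\sum\delta_{\nS}=2\upbeta^2\sum\upgamma_{\nS}^2<\pinf$. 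Lemma~\ref{l:5} then gives at once two things. First, $(\|x_{\nS}-\zS\|)_{\nnn}$ converges $\Pas$, hence $\sup_{\jS}\uppsi(\|x_{\jS}\|)<\pinf$ $\Pas$ since $\uppsi$ is increasing. Second, $\sum\upgamma_{\nS}\xi_{\nS}<\pinf$ $\Pas$.

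This is essentially what the paper does in \eqref{e:104}--\eqref{e:105} to get boundedness. The paper then reaches $\sum\upgamma_{\nS}\xi_{\nS}<\pinf$ separately through the longer detour \eqref{e:sgd2}--\eqref{e:111}; your convexity-based Fejér step, once corrected as above, is the shorter path.

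With boundedness in hand, the rest of your argument goes through. Using Theorem~\ref{t:1}\ref{t:1ii} instead of \ref{t:1i} is fine. Your choice $\varepsilon_{\nS}=\frac{\upbeta}{2}\upgamma_{\nS}^2\uppsi(\|x_{\nS}\|)$ is $\FE_{\nS}$-measurable, as Theorem~\ref{t:1} requires. The paper's $\sup_{\jS}$-based $\varepsilon_{\nS}$ is not.
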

\begin{proof}
First, we note that $\sum_{\nnn}\upgamma_{\nS}=\pinf$ and
$\sum_{\nnn}\upgamma_{\nS}^2<\pinf$.
Moreover, as a consequence of \eqref{e:99} and the dominated
convergence theorem \cite[Proposition~1.2.5]{Hyto16},
\begin{equation}
(\forall\xS\in\HS)\quad\nabla\fS(\xS)
=\Int_{\upOmega}\nabla\fS_{k(\upomega)}(\xS)\PP(d\upomega).
\end{equation}
Let us now show that $(x_{\nS})_{\nnn}$ is bounded $\Pas$
Let $\zS\in\Argmin\fS$ and let $\nnn$. Then it
follows from \eqref{e:sgd1} and \eqref{e:99} that 
\begin{align}
\label{e:104}
&{\norm{x_{\nS+1}-\zS}^2}\nonumber\\
&\qquad=\norm1{\brk{x_{\nS}-\zS}
-\upgamma_{\nS}\nabla\fS_{k_{\nS}}(x_{\nS})}^2\nonumber\\
&\qquad=\norm{x_{\nS}-\zS}^2
-2\upgamma_{\nS}\scal1{x_{\nS}-\zS}
{\nabla\fS_{k_{\nS}}(x_{\nS})}
+\upgamma_{\nS}^2{\norm1{\nabla\fS_{k_{\nS}}(x_{\nS})}^2}
\nonumber\\
&\qquad\leq\norm{x_{\nS}-\zS}^2
-2\upgamma_{\nS}\scal1{x_{\nS}-\zS}
{\nabla\fS_{k_{\nS}}(x_{\nS})}
+2\upgamma_{\nS}^2{\norm1{\nabla\fS_{k_{\nS}}(x_{\nS})
-\nabla\fS_{k_{\nS}}(\zS)}^2}
+2\upgamma_{\nS}^2{\norm1{\nabla\fS_{k_{\nS}}(\zS)}^2}
\nonumber\\
&\qquad\leq\norm{x_{\nS}-\zS}^2
-2\upgamma_{\nS}\scal1{x_{\nS}-\zS}
{\nabla\fS_{k_{\nS}}(x_{\nS})}
+2\upbeta^2\upgamma_{\nS}^2{\norm{x_{\nS}-\zS}^2}
+2\upgamma_{\nS}^2{\norm1{\nabla\fS_{k_{\nS}}(\zS)}^2}
\nonumber\\
&\qquad=\brk1{1+2\upbeta^2\upgamma_{\nS}^2}
\norm{x_{\nS}-\zS}^2
-2\upgamma_{\nS}\scal1{x_{\nS}-\zS}
{\nabla\fS_{k_{\nS}}(x_{\nS})}
+2\upgamma_{\nS}^2{\norm1{\nabla\fS_{k_{\nS}}(\zS)}^2}
\;\;\Pas
\end{align}
We note that $x_{\nS}-\zS$ is $\FE_{\nS}$-measurable,
$\nabla\fS(\zS)=0$, and
$\upbeta^{-1}\nabla\fS$ is firmly nonexpansive
\cite[Corollary~18.17]{Livre1}. Therefore, \eqref{e:104},
\cite[Proposition~2.6.31]{Hyto16}, the
identity $\nabla\fS(\zS)=0$, and \eqref{e:99} yield
\begin{align}
&\EC1{\norm{x_{\nS+1}-\zS}^2}{\FE_{\nS}}
\nonumber\\
&\qquad\leq\brk1{1+2\upbeta^2\upgamma_{\nS}^2}
\norm{x_{\nS}-\zS}^2
-2\upgamma_{\nS}\EC2{\scal1{x_{\nS}-\zS}
{\nabla\fS_{k_{\nS}}(x_{\nS})}}{\FE_{\nS}}
+2\upgamma_{\nS}^2
\EC1{\norm{\nabla\fS_{k_{\nS}}(\zS)}^2}
{\FE_{\nS}}\nonumber\\
&\qquad\leq\brk1{1+2\upbeta^2\upgamma_{\nS}^2}
\norm{x_{\nS}-\zS}^2
-2\upgamma_{\nS}\scal2{x_{\nS}-\zS}
{\EC1{\nabla\fS_{k_{\nS}}(x_{\nS})}{\FE_{\nS}}}
+2\upgamma_{\nS}^2
\EC1{\norm{\nabla\fS_{k_{\nS}}(\zS)}^2}
{\FE_{\nS}}\nonumber\\
&\qquad=\brk1{1+2\upbeta^2\upgamma_{\nS}^2}
{\norm{x_{\nS}-\zS}^2}
-2\upgamma_{\nS}
\scal1{x_{\nS}-\zS}{\nabla\fS(x_{\nS})}
+2\upgamma_{\nS}^2\EE{\norm{\nabla\fS_{k}(\zS)}^2}
\nonumber\\
&\qquad\leq\brk1{1+2\upbeta^2\upgamma_{\nS}^2}
{\norm{x_{\nS}-\zS}^2}
-2\upbeta^{-1}\upgamma_{\nS}
\norm{\nabla\fS(x_{\nS})}^2
+2\upgamma_{\nS}^2\uppsi\brk1{\norm{\zS}}
\nonumber\\
&\qquad\leq\brk1{1+2\upbeta^2\upgamma_{\nS}^2}
{\norm{x_{\nS}-\zS}^2}
+2\upgamma_{\nS}^2\uppsi\brk1{\norm{\zS}}\;\;\Pas
\label{e:105}
\end{align}
Thus, it follows from Lemma~\ref{l:5} that 
$(\norm{x_{\nS}-\zS})_{\nnn}$ converges $\Pas$, which ensures that
$(x_{\nS})_{\nnn}$ is bounded $\Pas$ We infer from \eqref{e:sgd1}
and \cite[Proposition~17.7 and Theorem~18.15]{Livre1} that
\begin{equation}
\label{e:sgd2}
\upgamma_{\nS}\fS_{k_{\nS}}(x_{\nS+1})
\leq\upgamma_{\nS}\fS_{k_{\nS}}(\zS)
+\scal1{x_{\nS+1}-\zS}
{\upgamma_{\nS}\nabla\fS_{k_{\nS}}(x_{\nS})}
+\dfrac{\upbeta\upgamma_{\nS}}{2}\norm{x_{\nS+1}-x_{\nS}}^2\;\;\Pas
\end{equation}
Hence, \cite[Proposition~17.7]{Livre1} and \eqref{e:sgd1} yield
\begin{align}
&\scal1{x_{\nS+1}-\zS}{x_{\nS+1}-x_{\nS}}\nonumber\\
&\quad\leq\upgamma_{\nS}\brk1{\fS_{k_{\nS}}(\zS)
-\fS_{k_{\nS}}(x_{\nS})+\fS_{k_{\nS}}(x_{\nS})
-\fS_{k_{\nS}}(x_{\nS+1})} 
+\dfrac{\upbeta\upgamma_{\nS}}{2}\norm{x_{\nS+1}-x_{\nS}}^2
\nonumber\\
&\quad\leq\upgamma_{\nS}\brk2{\fS_{k_{\nS}}(\zS)
-\fS_{k_{\nS}}(x_{\nS})
+\scal1{x_{\nS}-x_{\nS+1}}{\nabla\fS_{k_{\nS}}(x_{\nS})}} 
+\dfrac{\upbeta\upgamma_{\nS}}{2}\norm{x_{\nS+1}-x_{\nS}}^2
\nonumber\\
&\quad=\upgamma_{\nS}\brk1{\fS_{k_{\nS}}(\zS)
-\fS_{k_{\nS}}(x_{\nS})}
+\upgamma_{\nS}^2\norm1{\nabla\fS_{k_{\nS}}(x_{\nS})}^2 
+\dfrac{\upbeta\upgamma_{\nS}}{2}\norm{x_{\nS+1}-x_{\nS}}^2\;\;\Pas
\label{e:109}
\end{align}
Consequently, we deduce from \eqref{e:99} that
\begin{align}
&\EC1{\norm{x_{\nS+1}-\zS}^2}{\FE_{\nS}}
\nonumber\\
&\;\;=\norm{x_{\nS}-\zS}^2
+2\EC1{\scal{x_{\nS}-\zS}{x_{\nS+1}-x_{\nS}}}{\FE_{\nS}}
+\EC1{\norm{x_{\nS+1}-x_{\nS}}^2}{\FE_{\nS}}
\nonumber\\
&\;\;=\norm{x_{\nS}-\zS}^2
+2\EC1{\scal{x_{\nS+1}-\zS}{x_{\nS+1}-x_{\nS}}}{\FE_{\nS}}
-\EC1{\norm{x_{\nS+1}-x_{\nS}}^2}{\FE_{\nS}}
\nonumber\\
&\;\;\leq\norm{x_{\nS}-\zS}^2
+2\upgamma_{\nS}\EC1{\fS_{k_{\nS}}(\zS)
-\fS_{k_{\nS}}(x_{\nS})}{\FE_{\nS}}+2\upgamma_{\nS}^2\EC1{
\norm{\nabla\fS_{k_{\nS}}(x_{\nS})}^2}{\FE_{\nS}}
\nonumber\\
&\quad\;\;+{\upbeta\upgamma_{\nS}}
\EC1{\norm{x_{\nS+1}-x_{\nS}}^2}{\FE_{\nS}}
-\EC1{\norm{x_{\nS+1}-x_{\nS}}^2}{\FE_{\nS}}
\nonumber\\
&\;\;=\norm{x_{\nS}-\zS}^2
+2\upgamma_{\nS}\brk1{\fS(\zS)
-\fS(x_{\nS})}+2\upgamma_{\nS}^2\EC1{
\norm{\nabla\fS_{k_{\nS}}(x_{\nS})}^2}{\FE_{\nS}}
\nonumber\\
&\quad\;\;+\brk1{{\upbeta\upgamma_{\nS}}-1}
\EC1{\norm{x_{\nS+1}-x_{\nS}}^2}{\FE_{\nS}}
\nonumber\\
&\;\;\leq\norm{x_{\nS}-\zS}^2
+2\upgamma_{\nS}\brk1{\fS(\zS)-\fS(x_{\nS})}
+2\upgamma_{\nS}^2\uppsi\brk1{\norm{x_{\nS}}}
+\brk1{\upbeta\upgamma_{\nS}-1}
\EC1{\norm{x_{\nS+1}-x_{\nS}}^2}{\FE_{\nS}}\nonumber\\
&\;\;\leq\norm{x_{\nS}-\zS}^2
+2\upgamma_{\nS}\brk1{\fS(\zS)-\fS(x_{\nS})}
+2\upgamma_{\nS}^2\uppsi\brk1{\norm{x_{\nS}}}\nonumber\\
&\quad\;\;+\max\brk[c]1{0,\upbeta\upgamma_{\nS}-1}
\EC1{\norm{x_{\nS+1}-x_{\nS}}^2}{\FE_{\nS}}\;\;\Pas
\label{e:110}
\end{align}
It results from the almost sure boundedness of $(x_{\nS})_{\nnn}$
and \eqref{e:99} that $(\uppsi(\norm{x_{\nS}}))_{\nnn}$ is bounded
$\Pas$ In addition, because $\sum_{\nnn}\upgamma_{\nS}^2<\pinf$, we
have $\upbeta\upgamma_{\nS}-1<0$ for $\nS$ large enough. We apply
Lemma~\ref{l:5} to \eqref{e:110} to obtain that 
\begin{equation}
\label{e:111}
\sum_{\nnn}\upgamma_{\nS}
\brk1{\fS(x_{\nS})-\inf\fS(\HS)}
<\pinf\;\;\Pas
\end{equation}
On the other hand, we deduce from
\cite[Theorem~18.15]{Livre1} that
\begin{align}
(\forall\nnn)\quad\fS(x_{\nS+1})
&\leq\fS(x_{\nS})
+\scal1{x_{\nS+1}-x_{\nS}}{\nabla\fS(x_{\nS})}
+\dfrac{\upbeta}{2}\norm1{x_{\nS+1}-x_{\nS}}^2\nonumber\\
&=\fS(x_{\nS})
-\upgamma_{\nS}\scal1{\nabla\fS_{k_{\nS}}(x_{\nS})}
{\nabla\fS(x_{\nS})}
+\dfrac{\upbeta\upgamma_{\nS}^2}{2}
\norm1{\nabla\fS_{k_{\nS}}(x_{\nS})}^2\;\;\Pas
\end{align}
Upon subtracting the infimum value and taking the conditional
expectation, we obtain 
\begin{align}
(\forall\nnn)\quad
&\EC1{\fS(x_{\nS+1})-\inf\fS(\HS)}
{\FE_{\nS}}\nonumber\\
&\leq\fS(x_{\nS})-\inf\fS(\HS)
-\upgamma_{\nS}\norm1{\nabla\fS(x_{\nS})}^2
+\dfrac{\upbeta\upgamma_{\nS}^2}{2}
\EC2{\norm1{\nabla\fS_{k_{\nS}}(x_{\nS})}^2}{\FE_{\nS}}
\nonumber\\
&\leq\fS(x_{\nS})-\inf\fS(\HS)
+\dfrac{\upbeta\upgamma_{\nS}^2}{2}\uppsi\brk1{\norm{x_{\nS}}}
\nonumber\\
&\leq\fS(x_{\nS})-\inf\fS(\HS)
+\dfrac{\upbeta\upgamma_{\nS}^2}{2}
\sup_{\jjj}\uppsi\brk1{\norm{x_{\jS}}}\;\;\Pas
\end{align}
To conclude, let us apply Theorem~\ref{t:1}\ref{t:1i} to
\begin{equation}
(\forall\nnn)\quad
\begin{cases}
\alpha_{\nS}=\upgamma_{\nS}\\
\xi_{\nS}=\fS(x_{\nS})-\inf\fS(\HS)\\
\delta_{\nS}=0\\
\varepsilon_{\nS}=\dfrac{\upbeta\upgamma_{\nS}^2}{2}
\displaystyle\sup_{\jjj}\uppsi\brk1{\norm{x_{\jS}}}.
\end{cases}
\end{equation}
Since $\sum_{\nnn}\upgamma_{\nS}=\pinf$ and \eqref{e:111}
holds, it remains to check that
$\sum_{\nnn}\uptheta_{\nS}\varepsilon_{\nS}<\pinf\;\Pas$ We have
\begin{align}
\sum_{\nnn}\uptheta_{\nS}\varepsilon_{\nS}
&=\dfrac{\upbeta}{2}
\displaystyle\sup_{\jjj}\uppsi\brk1{\norm{x_{\jS}}}
\sum_{\nnn}\brk3{\upgamma_{\nS}^2\sum_{\kS=0}^{\nS}\upgamma_{\kS}}
\nonumber\\
&=\dfrac{\upbeta}{2}
\displaystyle\sup_{\jjj}\uppsi\brk1{\norm{x_{\jS}}}
\sum_{\nnn}\brk3{(\nS+1)^{-2\mathsf{p}}
\sum_{\kS=0}^{\nS}(\kS+1)^{-\mathsf{p}}}
\label{e:b1}
\end{align}
We consider the two cases of interest.
\begin{itemize}
\item
Suppose that $\mathsf{p}\in\left]2/3,1\right[$. Then
we derive from \eqref{e:b1} that
\begin{align}
\sum_{\nnn}\uptheta_{\nS}\varepsilon_{\nS}
&\leq\dfrac{\upbeta}{2}
\displaystyle\sup_{\jjj}\uppsi\brk1{\norm{x_{\jS}}}
\sum_{\nnn}\brk3{(\nS+1)^{-2\mathsf{p}}
\brk2{1+\dfrac{(\nS+1)^{1-\mathsf{p}}-1}{1-\mathsf{p}}}}
\nonumber\\
&=\dfrac{\upbeta}{2}
\displaystyle\sup_{\jjj}\uppsi\brk1{\norm{x_{\jS}}}
\sum_{\nnn}\brk3{
\dfrac{-\mathsf{p}}{1-\mathsf{p}}(\nS+1)^{-2\mathsf{p}}
+\dfrac{1}{1-\mathsf{p}}(\nS+1)^{1-3\mathsf{p}}
}
\nonumber\\
&<\pinf\;\;\Pas
\end{align}
\item
Suppose that $\mathsf{p}=1$. Then
\eqref{e:b1} yields
\begin{equation}
\sum_{\nnn}\uptheta_{\nS}\varepsilon_{\nS}
\leq\dfrac{\upbeta}{2}
\displaystyle\sup_{\jjj}\uppsi\brk1{\norm{x_{\jS}}}
\sum_{\nnn}\brk3{(\nS+1)^{-2}
\brk1{1+\ln(\nS+1)}}<\pinf\;\;\Pas
\end{equation}
\end{itemize}
In both cases, 
$\sum_{\nnn}\uptheta_{\nS}\varepsilon_{\nS}<\pinf\;\Pas$ and
we infer from Theorem~\ref{t:1}\ref{t:1i} that, $\Pas$,
\begin{equation}
\xi_{\nS+1}=\fS(x_{\nS+1})-\inf\fS(\HS)=
\mathrm{o}\brk2{\dfrac{1}{\uptheta_{\nS}}}=
\begin{cases}
\mathrm{o}\brk2{\dfrac{1}{(\nS+1)^{1-\mathsf{p}}}},&\text{if}\;
\mathsf{p}\in\left]2/3,1\right[;\\[3mm]
\mathrm{o}\brk2{\dfrac{1}{\ln(\nS+1)}},
&\text{if}\;\mathsf{p}=1,
\end{cases}
\end{equation}
as desired.
\end{proof}


\begin{thebibliography}{99}
\setlength{\itemsep}{0pt}
\small

\bibitem{Abel28}
N. H. Abel,
Note sur le m\'emoire de Mr. L. Olivier No. 4 du second tome de ce
journal, ayant pour titre ,,remarques sur les s\'eries infinies et
leur convergence'', suivie d'une note de Mr. L. Olivier sur le
m\^eme objet,
{\em J. Reine Angew. Math.},
vol. 3, pp. 79--82, 1828.

\bibitem{Atti26}
A. Attiaq, M. Schliserman, U. Sherman, and T. Koren,
Fast last-iterate convergence of SGD in the smooth interpolation
regime,
{\em Adv. Neural Inf. Process. Syst.},
vol. 38, pp. 104951--104987, 2026.

\bibitem{Bach14}
F. Bach,
Adaptivity of averaged stochastic gradient descent to local strong
convexity for logistic regression,
{\em J. Mach. Learn. Res.},
vol. 15, pp. 595--627, 2014.

\bibitem{Livre1} 
H. H. Bauschke and P. L. Combettes, 
{\em Convex Analysis and Monotone Operator Theory in Hilbert 
Spaces}, 2nd ed. 
Springer, New York, 2017.

\bibitem{Boas65}
R. P. Boas, Jr.,
Quasi-positive sequences and trigonometric series,
{\em Proc. London Math. Soc.}, 
vol. s3-14A, pp. 38--46, 1965. 

\bibitem{Brav24}
M. Bravo and R. Cominetti, 
Stochastic fixed-point iterations for nonexpansive maps: 
Convergence and error bounds,
{\em SIAM J. Control Optim.},
vol. 62, pp. 191--219, 2024.

\bibitem{Brez78} 
H. Br\'ezis and P. L. Lions,
Produits infinis de r\'esolvantes,
{\em Israel J. Math.},
vol. 29, pp. 329--345, 1978.

\bibitem{Svva21}
M. N. B\`ui and P. L. Combettes,
Bregman forward-backward operator splitting, 
{\em Set-Valued Var. Anal.},
vol. 29, pp. 583--603, 2021. 

\bibitem{Acnu24} 
P. L. Combettes,
The geometry of monotone operator splitting methods,
{\em Acta Numer.},
vol. 33, pp. 487--632, 2024.

\bibitem{Moco26}
P. L. Combettes and J. I. Madariaga,
A geometric framework for stochastic iterations,
{\em Math. Comput.}, 
\url{https://doi.org/10.1090/mcom/4230}

\bibitem{Sadd25} 
P. L. Combettes and J. I. Madariaga,
Asymptotic analysis of an abstract stochastic scheme for solving
monotone inclusions,
arxiv, 2025. 
\url{https://arxiv.org/pdf/2512.03023}

\bibitem{Siop15} 
P. L. Combettes and J.-C. Pesquet, 
Stochastic quasi-Fej\'er block-coordinate fixed point iterations
with random sweeping,
{\em SIAM J. Optim.}, 
vol. 25, pp. 1221--1248, 2015.

\bibitem{MaPa18}
P. L. Combettes, S. Salzo, and S. Villa, 
Consistent learning by composite proximal thresholding,
{\em Math. Program.},
vol. B167, pp. 99--127, 2018. 

\bibitem{Davi16}
D. Davis and W. Yin, 
Convergence rate analysis of several splitting schemes,
in: R. Glowinski, S. J. Osher, and W. Yin (eds.),
{\em Splitting Methods in Communication, Imaging, Science, and
Engineering}, pp. 115--163. Springer, New York, 2016.

\bibitem{Dini67}
U. Dini,
Sulle serie a termini positivi,
{\em Giornale di Matematiche (G. Battaglini)},
vol. VI, pp. 166--175, 1868.

\bibitem{Dong22} 
Q.-L. Dong, Y. J. Cho, S. He, P. M. Pardalos, and T. M. Rassias, 
{\em The Krasnosel'ski\u\i--Mann Iterative Method -- Recent
Progress and Applications}.
Springer, New York, 2022.

\bibitem{Dufl90} 
M. Duflo,
{\em M\'ethodes R\'ecursives Al\'eatoires}.
Masson, Paris, 1990. English translation: 
{\em Random Iterative Models}.
Springer, New York, 1997.

\bibitem{Ermo69} 
Yu. M. Ermol'ev, 
On the method of generalized stochastic gradients and 
quasi-Fej\'er sequences,
{\em Cybernetics}, 
vol. 5, pp. 208--220, 1969.

\bibitem{Fran22} 
B. Franci and S. Grammatico,
Convergence of sequences: A survey,
{\em Annu. Rev. Control},
vol. 53, pp. 161--186, 2022.

\bibitem{Glad65}
E. G. Gladyshev,
On stochastic approximation,
{\em Theory Probab. Appl.},
vol. 10, pp 275--278, 1965.

\bibitem{Groe72} 
C. W. Groetsch, 
A note on segmenting Mann iterates,
{\em J. Math. Anal. Appl.},
vol. 40, pp. 369--372, 1972.

\bibitem{Hyto16}
T. Hyt\"onen, J. van Neerven, M. Veraar, and L. Weis,
{\em Analysis in Banach Spaces. Volume I: Martingales and 
Littlewood--Paley Theory}.
Springer, New York, 2016.

\bibitem{Lian16}
J. Liang, J. Fadili, and G. Peyr\'e,
Convergence rates with inexact non-expansive operators,
{\em Math. Program.},
vol. A159, pp. 403--434, 2016.

\bibitem{Liu24}
J. Liu and Y. Yuan,
Almost sure convergence rates analysis and saddle avoidance of
stochastic gradient methods,
{\em J. Mach. Learn. Res.},
vol. 25, pp. 1--40, 2024.

\bibitem{Nemi09}
A. Nemirovski, A. Juditsky, G. Lan, and A. Shapiro,
Robust stochastic approximation approach to stochastic programming,
{\em SIAM J. Optim.},
vol. 19, pp. 1574--1609, 2009.

\bibitem{Neri26}
M. Neri and T. Powell,
A quantitative Robbins--Siegmund theorem, 
{\em Ann. Appl. Probab.},
vol. 36, pp. 636--651, 2026.

\bibitem{Oliv27}
L. Olivier, 
Remarques sur les s\'eries infinies et leur convergence,
{\em J. Reine Angew. Math.},
vol. 2, pp. 31--44, 1827.

\bibitem{Robb71} 
H. Robbins and D. Siegmund, 
A convergence theorem for non negative almost supermartingales 
and some applications, in:
{\em Optimizing Methods in Statistics}, (J. S. Rustagi, Ed.), 
pp. 233--257. Academic Press, New York, 1971.

\bibitem{Robe68}
M. M. Robertson,
A generalization of quasi-monotone sequences,
{\em Proc. Edinb. Math. Soc.},
vol. 16, pp. 37--41, 1968.

\bibitem{Sing78}
N. Singh and K. M. Sharma,
Convergence of certain cosine sums in a metric space -- L,
{\em Proc. Amer. Math. Soc.},
vol. 72, pp. 117--120, 1978.

\bibitem{Szas48}
O. Sz\'asz,
Quasi-monotone series,
{\em Amer. J. Math.},
vol. 70, pp. 203--206, 1948.

\bibitem{Vall14}
Ch.-J. de la Vall\'ee Poussin, 
{\em Cours d'Analyse Infinit\'esimale}, tome I, 3e \'ed. 
Gauthier--Villars, Paris, 1914.

\end{thebibliography}
\end{document}